\newcommand{\rmnum}[1]{\romannumeral #1} 
\newcommand{\Rmnum}[1]{\uppercase\expandafter{\romannumeral #1}} 
\numberwithin{equation}{section}
\newtheorem{Lemma}{Lemma}[section]
\newtheorem{Theorem}{Theorem}[section]
\newtheorem{Remark}{Remark}[section]
\newcounter{saveeqn}
\def\@maketitle{%
	\newpage
	\null
	\vskip 2em%
	\begin{center}%
		\let \footnote \thanks
		{\LARGE \@title \par}%
		\vskip 1.5em%
		{\large
			\lineskip .5em%
			\begin{tabular}[t]{c}%
				\@author
			\end{tabular}\par}%
	\end{center}%
	\par
	\vskip 1.5em}
\title{\bf  Second-order unconditionally stable time-filtered scheme for Cahn-Hilliard-Navier-Stokes system}
\author{
	Xi Li\footnote{School of Mathematical Sciences, Chengdu University of Technology, Chengdu, Sichuan 610059, China (lixi@cdut.edu.cn). The work of this author is supported by the Natural Science Foundation of Sichuan Province(No. 2025ZNSFSC0070).}, 
	Chun Song\footnote{School of Mathematics, Sichuan University, Chengdu, Sichuan 610064, China (song\_chun@stu.scu.edu.cn).}, 
	Haijun Gao\footnote{School of Mathematics, Shandong University, Jinan, Shandong, 250100, China (gaohaijun@sdu.edu.cn).},
	\ and Minfu Feng\footnote{School of Mathematics, Sichuan University, Chengdu, Sichuan 610064, China (fmf@scu.edu.cn). The work of this author was supported by the National Natural Science Foundation of China (Grant No. 11971337).}
}
\begin{document}
	\maketitle
	\newcommand\blfootnote[1]{%
		\begingroup
		\renewcommand\thefootnote{}\footnote{#1}%
		\par\setlength\parindent{2em}
		\endgroup
	}
	\captionsetup[figure]{labelfont={bf},labelformat={default},labelsep=period,name={Fig.}}
	\captionsetup[table]{labelfont={bf},labelformat={default},labelsep=period,name={Tab.}}
	\begin{abstract}
		In this work, we introduce the time filtering technique to develop several innovative semi-discrete schemes in time for the Cahn–Hilliard–Navier–Stokes (CHNS) system. These schemes achieve second-order temporal accuracy while maintaining unconditional energy stability. Our approach begins with the discretization of the CHNS system using the first-order semi-implicit method. Subsequently, by applying time filtering techniques, we improve the temporal accuracy from first-order to second-order. This improvement requires only minor modifications to the original first-order semi-implicit scheme, thereby enabling higher accuracy to be achieved at minimal cost. Moreover, we rigorously establish the unconditional energy stability of the proposed schemes through theoretical analysis. Additionally, we extend our work to develop semi-discrete schemes that incorporate variable and adaptive time-stepping strategies, enhancing the flexibility and efficiency of simulations. Numerical examples are presented to validate the theoretical results and demonstrate the effectiveness of the proposed methods.\\
		\\
		
		\noindent {\bf Keywords: }{Cahn-Hilliard-Navier-Stokes system; Time filtering technique; Variable or adaptive time-step; Unconditional stability.}\\
	\end{abstract}
	\baselineskip 15pt
	\parskip 10pt
	\setcounter{page}{1}
	\vspace{-0.5cm}
	\section{Introduction}
	In this work, we shall consider the second-order in time, unconditional energy-stable time-stepping, and low computational complexity numerical approximation for the following matched-density Cahn-Hilliard-Navier-Stokes (CHNS) model \cite{2017_Diegel_Convergence_analysis_and_error_estimates_for_a_second_order_accurate_finite_element_method_for_the_Cahn_Hilliard_Navier_Stokes_system, 2023_CaiWentao_Optimal_L2_error_estimates_of_unconditionally_stable_finite_element_schemes_for_the_Cahn_Hilliard_Navier_Stokes_system}:
	\begin{subequations}\label{eq_chns_equations}
		\begin{align}
			\label{eq_phi_con}
			\frac{\partial \phi }{\partial t}+(\boldsymbol{u}\cdot\nabla)\phi-  \epsilon \nabla \cdot (M(\phi) \nabla \mu)=0,\quad \text{in} \quad\Omega\times (0, T],&\\
			\label{eq_mu_con}
			\mu + \epsilon\Delta \phi- \epsilon^{-1} \left(\phi^3 - \phi\right)=0,\quad \text{in} \quad\Omega\times (0, T],&\\
			\label{eq_ns_con}
			\frac{\partial \boldsymbol{u}}{\partial t}+(\boldsymbol{u}\cdot\nabla)\boldsymbol{u}-\nu\Delta\boldsymbol{u}+\nabla p-\gamma \mu\nabla\phi=0,\quad\text{in} \quad\Omega\times (0, T],&\\
			\label{eq_incompressi}
			\nabla\cdot\boldsymbol{u}=0,\quad\text{in} \quad\Omega\times (0, T],&
		\end{align}
	\end{subequations}
	with the following boundary and initial conditions 
	\begin{subequations}
		\label{eq_boundary_initial_conditions_equations}
		\begin{align}
			\label{eq_boundary_conditions_equation}
			\frac{\partial \phi}{\partial \mathbf{n}}=\frac{\partial\mu}{\partial \mathbf{n}}=0,\quad\boldsymbol{u}=0,&\quad\text{on}\quad \partial\Omega\times (0,T],\\
			\label{eq_initial_conditions_equation}
			\phi(\mathbf{x},0)=\phi^0,\quad\boldsymbol{u}(\mathbf{x},0)=\boldsymbol{u}^0,&\quad\text{in}\quad \Omega.
		\end{align}
	\end{subequations}
	Here $\Omega\subset\mathbb{R}^d,~(d=2,3)$ is a bounded convex polygonal or polyhedral domain, and for $t\in (0,T]$.
	In this model, $\phi$ represents the phase field variable, $\mu$ denotes the chemical potential, $\boldsymbol{u}$ and $p$ are the velocity and pressure fields of the fluid, respectively.
	$\epsilon$ is the interfacial width between the two phases field, and $M(\phi) > 0$ is the mobility, $\nu = 1/Re$ where $Re$ is the Reynolds number, $\gamma = 1/We^*$ and $We^*$ is the modified Weber number that measures the relative strength of kinetic and surface energies.
    We denote $\phi^3 - \phi =: f(\phi) = F^{\prime}(\phi)$, then the following condition  holds\cite{2010_ShenJie_Energy_stable_schemes_for_Cahn_Hilliard_phase_field_model_of_two_phase_incompressible_flows}: there exists a positive constant $L$ such that 
	\begin{equation}\label{eq_phi_condition}
		\max_{\phi\in \mathbb{R}}~|f'(\phi)|\leq L.
	\end{equation}
	\indent Over the past decade, numerous numerical schemes have been proposed by researchers for the CHNS model, each addressing different physical properties. (\rmnum{1}) For energy-stable discretization schemes, techniques primarily developed for the Cahn–Hilliard equation have been adopted, such as convex splitting\cite{2017_Diegel_Convergence_analysis_and_error_estimates_for_a_second_order_accurate_finite_element_method_for_the_Cahn_Hilliard_Navier_Stokes_system}, linear stabilization techniques\cite{2017_CaiYongyong_Error_estimates_for_time_discretizations_of_Cahn_Hilliard_and_Allen_Cahn_phase_field_models_for_two_phase_incompressible_flows}, the invariant energy quadratization approach\cite{2017_YangXiaofeng_Numerical_approximations_for_the_molecular_beam_epitaxial_growth_model_based_on_the_invariant_energy_quadratization_method}, and scalar auxiliary variable methods\cite{2018_Shenjie_Xujie_CAEAFTSAVSYGF,2018_Shenjie_Xujie_SAV,2025_Gao_CAMWA,2025_Gao_NMTMA}; (\rmnum{2}) computational efficiency-enhanced, including decoupling algorithms\cite{2015_ShenJie_Decoupled_energy_stable_schemes_for_phase_field_models_of_two_phase_incompressible_flows,2021_LiMinghui_New_efficient_time_stepping_schemes_for_the_Navier_Stokes_Cahn_Hilliard_equations} and reduced basis methods\cite{2025_Abbaszadeh_A_reduced_order_least_squares_support_vector_regression_and_isogeometric_collocation_method_to_simulate_Cahn_Hilliard_Navier_Stokes_equation}; (\rmnum{3}) high-order time discretizations\cite{2022JieShen_LiXiaoliMSAVCHNStwo_phase_incompressible_flows,2025_LiuXin_CHNSSDC}. Nevertheless, due to the inherent energy dissipation law, strong nonlinearity, and multiscale characteristics of the CHNS system, numerical schemes that combine low computational complexity, high-order temporal accuracy, and unconditional energy stability remain worthy of further investigation.  \\
	\indent In recent years, the novel time filtering (TF) technique has garnered significant attention due to its obvious advantages: not only can it improve a low-order time-discrete scheme to second-order or even higher precision with minimal computational overhead, but it also serves as an efficient, low-cost error estimator, providing critical support for adaptive time-step strategies.
	So, taking the variable time-step backward Euler (BE) method as an example, and denote stepsize as $\Delta t_n := t_{n+1} - t_n$ for discrete time $t_{n+1},t_n$, and stepsize ratio $\omega_n:= \Delta t_{n+1}/\Delta t_n$, then the main technique of TF can be expressed
	\begin{itemize}
		\item \textbf{Step 1} : Backward Euler
		$$
		\frac{y^{n+1} - y^n}{\Delta t_n} = f(t^{n+1}, y^{n+1}).
		$$
		\item \textbf{Step 2} : Time Filtering
		$$
		y^{n+1}_{TF} = y^{n+1} - \frac{\omega_n}{1+2\omega_n}\left(y^{n+1} - (1+\omega_n)y^n + \omega_ny^{n-1}\right).
		$$
		\item \textbf{Step 3} : To 
		\begin{itemize}
			\item improve temporal accuracy: $y^{n+1} = y^{n+1}_{TF}$, or
			\item serve as error estimator to adjust $\Delta t_{n+1}$ or $\Delta t_{n}$: $estimator = \|y^{n+1}_{TF} - y^{n+1}\|$.
		\end{itemize}
	\end{itemize}
	The TF technique was initially proposed by Kwizak and Robert\cite{1971_ASEMIIMPLICITSCHEMEFORGRIDPOINTATMOSPHERICMODELSOFTHEPRIMITIVEEQUATIONS}, subsequently subjected to theoretical analysis by Asselin et al. \cite{1972_FrequencyFilterforTimeIntegrations}, and further improved by Williams \cite{2009_AProposedModificationtotheRobertAsselinTimeFilter,2010_TheRAWFilterAnImprovementtotheRobertAsselinFilterinSemiImplicitIntegrations}. Since then, it has been widely applied to various models, such as the Navier-Stokes equations \cite{2020_DeCaria_A_time_accurate_adaptive_discretization_for_fluid_flow_problems,2023_FengXinlong_Filtered_time_stepping_method_for_incompressible_Navier_Stokes_equations_with_variable_density}, Stokes-Darcy model \cite{2023_QinYi_Analysis_of_a_new_adaptive_time_filter_algorithm_for_the_unsteady_Stokes_Darcy_model}, natural convection problems\cite{2023_FengXinlong_Analysis_of_a_filtered_time_stepping_finite_element_method_for_natural_convection_problems,2025_FengXinlong_An_analysis_of_second_order_sav_filtered_time_stepping_finite_element_method_for_unsteady_natural_convection_problems}, etc. However, due to the multi-variable nature, strong coupling effects, and high nonlinearity inherent in the CHNS model, the theoretical analysis of numerical schemes incorporating the TF technique remains particularly challenging and complex.\\
	 \indent In this work, we introduce the TF technique into the CHNS  model to construct several novel semi-discrete schemes that are first-order or second-order in time. Firstly, we employ the first-order BE method to discretize the CHNS model, where $f(\phi)$ is treated either fully implicitly or explicitly with a linear stabilization term. Subsequently, we utilize the TF technique to elevate the temporal accuracy from first-order to second-order. Furthermore, we theoretically analyze and prove the unconditional energy stability and second-order temporal error estimates of the proposed schemes. \\
	 \indent The main research conclusions of this work are outlined as follows: Our newly formulated second-order scheme accommodates constant, variable, and adaptive time-step alike. It offers minimal implementation complexity, necessitating only a single line of code modification to upgrade from the existing first-order scheme. Consequently, second-order temporal accuracy is achieved for all variables at a negligible computational expense. We rigorously demonstrate the unconditional energy stability of this numerical discretization approach. In comparison to energy-based adaptive schemes and conventional higher-order time semi-discretization methods, the error estimators employed in our adaptive time-stepping semi-discrete schemes significantly reduce computational overhead.\\
	\indent The rest of work is organized as follows.
	Some notations and preliminaries are given in Section \ref{section-notations-preliminaries}.
	The time-filtered scheme is presented in  Section \ref{section-3} for the CHNS model and then, the unconditional energy stability is proved. In Section 
	\ref{section_numerical_analysis}, the error estimations are analyzed, and finally, 
	in Section \ref{section_numerical_tests}, theoretical analyses of the time-filtered scheme are confirmed by several numerical experiments.
	\section{Notations and preliminaries}\label{section-notations-preliminaries}
	We introduce some notations and inequalities in this section. Here we use the notation $H^m(\Omega)$ and $(\cdot,\cdot)_m$, $\|\cdot\|_m$, for some positive integer $m$, to denote the standard Sobolev space $W^{m,2}(\Omega)$ and its inner-product and norm. $H^{-1}(\Omega) := (H^1(\Omega))^*$ denote the dual space of $H^1(\Omega)$, and $\langle\cdot, \cdot\rangle$ is the duality paring between $H^{-1}(\Omega)$ and $H^{1}(\Omega)$. The $\|\cdot\|$ and $\left(\cdot,\cdot\right)$ denote the norm and inner product of $L^2$ space, and denote
	$$
	\mathring{H}^1(\Omega) := H^1(\Omega) \cap L_0^2(\Omega), \quad \mathring{H}^{-1}(\Omega) := \{v \in H^{-1}(\Omega) | \langle v,1 \rangle = 0\},
	$$
	with $L_0^2(\Omega)$ representing those functions in $L^2(\Omega)$ with zero mean. In particular, the phase function and chemical potential space $X$ and velocity space $\boldsymbol{Y}$ and pressure space $Q$ are
	defined 
	$$
	\begin{aligned}
		&X:=H^1(\Omega),~\boldsymbol{Y}:=H_0^1(\Omega)^d, \text{and } Q:=L_0^2(\Omega).
	\end{aligned}
	$$
	We choose the velocity and pressure spaces $\boldsymbol{Y}, Q$ that satisfy the inf-sup condition:
	\begin{equation*}\label{eq_inf_sup_condition}
		\inf _{q\in Q}\sup_{{\boldsymbol{v}}\in \boldsymbol{Y}} \frac{\left(q,\nabla{\boldsymbol{v}}\right)}{\|q\|\|\nabla{\boldsymbol{v}}\|}\geq \beta\geq 0,
	\end{equation*}
	for some constant $\beta$. \\	
	\indent Following \cite{2015_Diegel_Analysis_of_a_mixed_finite_element_method_for_a_Cahn_Hilliard_Darcy_Stokes_system}, we define a linear operator for later analysis. Let a linear operator $\mathcal{T}: \mathring{H}^{-1}(\Omega) \rightarrow \mathring{H}^{1}(\Omega)$ via the following variational problem: given $\zeta \in \mathring{H}^{-1}(\Omega)$ such that
	$$
	(\nabla \mathcal{T}(\zeta), \nabla \chi) = \langle \zeta, \chi \rangle, \quad \forall \chi \in \mathring{H}^{1}(\Omega).
	$$
	The linear operator $\mathcal{T}$ is well-defined via the Riesz representation theorem, and the following lemma has been established.
	\begin{Lemma}[\cite{2015_Diegel_Analysis_of_a_mixed_finite_element_method_for_a_Cahn_Hilliard_Darcy_Stokes_system}]
		Let \( \zeta, \xi \in \mathring{H}^{-1}(\Omega) \) and, for such functions, set
		\[
		(\zeta, \xi)_{-1} := (\nabla \mathcal{T}(\zeta), \nabla \mathcal{T}(\xi)) = \langle \zeta, \mathcal{T}(\xi) \rangle = \langle \mathcal{T}(\zeta), \xi \rangle,
		\]
		and the induced norm is equal to the operator norm:
		\[
		\|\zeta\|_{-1} := \sqrt{(\zeta, \zeta)_{-1}} = \sup_{0 \neq \chi \in \mathring{H}^{1}} \frac{\langle \zeta, \chi \rangle}{\|\nabla \chi\|}.
		\]
		Consequently, for all \(\chi \in H^{1}(\Omega)\) and all \(\zeta \in \mathring{H}^{-1}(\Omega)\),
		\[
		|\langle \zeta, \chi \rangle| \leq \|\zeta\|_{-1} \|\nabla \chi\|.
		\]
		Furthermore, for all \(\zeta \in L^{2}_{0}(\Omega)\), we have the Poincar\'{e} type inequality
		\[
		\|\zeta\|_{-1} \leq C \|\zeta\|,
		\]
		where \( C > 0 \) is the usual Poincar\'{e} constant.
	\end{Lemma}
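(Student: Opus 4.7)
The plan is to verify each of the four assertions in turn by exploiting the defining variational identity $(\nabla \mathcal{T}(\zeta), \nabla \chi) = \langle \zeta, \chi\rangle$ together with linearity/symmetry inherited from the $L^{2}$ inner product. First, bilinearity and symmetry of $(\zeta,\xi)_{H^{-1}} := (\nabla \mathcal{T}(\zeta), \nabla \mathcal{T}(\xi))$ are immediate from the linearity of $\mathcal{T}$; positive-definiteness reduces to showing that $\|\nabla \mathcal{T}(\zeta)\|_{L^{2}} = 0$ implies $\zeta = 0$. Since $\mathcal{T}(\zeta)$ lies in $\mathring{H}^{1}(\Omega)$ and hence has zero mean, a vanishing gradient forces $\mathcal{T}(\zeta) \equiv 0$, and substituting into the defining problem then gives $\langle \zeta, \chi\rangle = 0$ for all $\chi \in \mathring{H}^{1}$, i.e.\ $\zeta = 0$ in $\mathring{H}^{-1}$. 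The alternative representations $\langle \zeta, \mathcal{T}(\xi)\rangle$ and $\langle \mathcal{T}(\zeta), \xi\rangle$ come out by substituting $\chi = \mathcal{T}(\xi)$ into the definition of $\mathcal{T}(\zeta)$ (resp.\ swapping the roles of $\zeta$ and $\xi$ and invoking symmetry).

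Next, the upper bound $\sup_{0 \neq \chi \in \mathring{H}^{1}} \langle \zeta, \chi\rangle / \|\nabla \chi\|_{L^{2}} \leq \|\zeta\|_{H^{-1}}$ follows from the Cauchy--Schwarz inequality applied to $\langle \zeta, \chi\rangle = (\nabla \mathcal{T}(\zeta), \nabla \chi)$, while the matching lower bound is attained by the optimal test $\chi = \mathcal{T}(\zeta) \in \mathring{H}^{1}$, which yields $\langle \zeta, \mathcal{T}(\zeta)\rangle / \|\nabla \mathcal{T}(\zeta)\|_{L^{2}} = \|\nabla \mathcal{T}(\zeta)\|_{L^{2}} = \|\zeta\|_{H^{-1}}$. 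To remove the zero-mean hypothesis on $\chi$ in the duality bound, I would decompose an arbitrary $\chi \in H^{1}(\Omega)$ as $\chi = (\chi - \bar\chi) + \bar\chi$ with $\bar\chi = |\Omega|^{-1}\int_{\Omega} \chi$; since $\zeta \in \mathring{H}^{-1}$ annihilates constants, $\langle \zeta, \chi\rangle = \langle \zeta, \chi - \bar\chi\rangle$, and the remainder $\chi - \bar\chi$ belongs to $\mathring{H}^{1}$ with $\|\nabla(\chi - \bar\chi)\|_{L^{2}} = \|\nabla \chi\|_{L^{2}}$, so the previous step closes the estimate.

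Finally, for $\zeta \in L^{2}_{0}(\Omega)$ the duality pairing coincides with the $L^{2}$ inner product, $\langle \zeta, \chi\rangle = (\zeta, \chi)_{L^{2}}$; feeding this into the $\sup$ formula and applying Cauchy--Schwarz in $L^{2}$ followed by the standard Poincar\'e inequality on $\mathring{H}^{1}$ gives $\|\zeta\|_{H^{-1}} \leq \|\zeta\|_{L^{2}}\,\sup_{0\neq\chi \in \mathring{H}^{1}} \|\chi\|_{L^{2}}/\|\nabla \chi\|_{L^{2}} \leq C\|\zeta\|_{L^{2}}$. The only genuinely subtle step is the positive-definiteness argument, where one must combine the zero-mean constraint built into $\mathring{H}^{1}$ with the defining variational identity to deduce that $\zeta$ vanishes in $\mathring{H}^{-1}$ and not merely that $\mathcal{T}(\zeta) = 0$ in $L^{2}$; every other step is a direct application of Cauchy--Schwarz together with well-chosen test functions.
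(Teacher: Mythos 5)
Your proof is correct; note, however, that the paper itself offers no proof of this lemma --- it is imported verbatim from the cited reference of Diegel et al., so there is no in-paper argument to compare against. Your route (positive-definiteness via the zero-mean constraint in $\mathring{H}^{1}$, the optimal test function $\chi=\mathcal{T}(\zeta)$ for the norm identity, mean-subtraction to extend the duality bound to all of $H^{1}$, and Poincar\'e--Wirtinger for the final estimate) is the standard one and each step closes as claimed. The only point worth stating explicitly in the positive-definiteness step is that $\mathcal{T}(\zeta)=0$ gives $\langle\zeta,\chi\rangle=0$ only for $\chi\in\mathring{H}^{1}$, and one must invoke $\langle\zeta,1\rangle=0$ from the definition of $\mathring{H}^{-1}(\Omega)$ to conclude that $\zeta$ vanishes on all of $H^{1}(\Omega)$ --- the same decomposition you already use later for the duality bound.
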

    For the trilinear term $((\boldsymbol{u} \cdot \nabla )\boldsymbol{v},\boldsymbol{w})$, we have the following estimates.    
	\begin{Lemma}[\cite{2021_lixiaoli_New_SAV_NS}]
		\label{lemma_nonlinear_term_boundness}
		There exists $C>0$ such that
		\begin{flalign}
			\label{eq_ns_nonlinear_inequalities01}
			((\boldsymbol{u} \cdot \nabla )\boldsymbol{v},\boldsymbol{w})\leq\left\{\begin{array}{l}
				C\|\boldsymbol{u}\|_{1}\|\boldsymbol{v}\|_{1}\|\boldsymbol{w}\|_{1},\\
				C\|\boldsymbol{u}\|_{2}\|\boldsymbol{v}\|\|\boldsymbol{w}\|_{1},\\
				C\|\boldsymbol{u}\|_{2}\|\boldsymbol{v}\|_{1}\|\boldsymbol{w}\|,\\
				C\|\boldsymbol{u}\|_{1}\|\boldsymbol{v}\|_{2}\|\boldsymbol{w}\|,\\
				C\|\boldsymbol{u}\|\|\boldsymbol{v}\|_{2}\|\boldsymbol{w}\|_{1}.
			\end{array}\right.
		\end{flalign}
	\end{Lemma}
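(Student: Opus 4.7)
The plan is to prove each of the five bounds in \eqref{eq_ns_nonlinear_inequalities01} by combining H\"older's inequality with the Sobolev embeddings $H^1(\Omega)\hookrightarrow L^4(\Omega)$ (valid for $d\le 4$) and $H^2(\Omega)\hookrightarrow L^\infty(\Omega)$ (valid for $d\le 3$). For the bounds that keep the gradient on $\boldsymbol{v}$, this is a one-line H\"older/Sobolev estimate; for the bound that requires only $\|\boldsymbol{v}\|$ without any derivative, I will first transfer the derivative off $\boldsymbol{v}$ via integration by parts, using the $H_0^1$ boundary condition implicit in the velocity space $\boldsymbol{Y}$.

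More concretely, I would handle the estimates in the following order. For the first bound, split in $L^4\cdot L^2\cdot L^4$ to get $\|\boldsymbol{u}\|_{L^4}\|\nabla\boldsymbol{v}\|_{L^2}\|\boldsymbol{w}\|_{L^4}$ and apply $H^1\hookrightarrow L^4$ to the first and third factors. For the third bound, split in $L^\infty\cdot L^2\cdot L^2$ and apply $H^2\hookrightarrow L^\infty$ to $\boldsymbol{u}$. For the fourth bound, split in $L^4\cdot L^4\cdot L^2$ and note that $\|\nabla\boldsymbol{v}\|_{L^4}\le C\|\boldsymbol{v}\|_{H^2}$ componentwise by $H^1\hookrightarrow L^4$. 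For the fifth bound, split in $L^2\cdot L^4\cdot L^4$ in the same spirit.

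The main obstacle is the second bound, $C\|\boldsymbol{u}\|_2\|\boldsymbol{v}\|\|\boldsymbol{w}\|_1$, because no derivative is allowed to land on $\boldsymbol{v}$. Here I would integrate by parts, using that $\boldsymbol{w}$ (or equivalently $\boldsymbol{u}$) belongs to $\boldsymbol{Y}=H_0^1(\Omega)^d$ so that the boundary term vanishes, to obtain
\[
\bigl((\boldsymbol{u}\cdot\nabla)\boldsymbol{v},\boldsymbol{w}\bigr)
=-\bigl((\boldsymbol{u}\cdot\nabla)\boldsymbol{w},\boldsymbol{v}\bigr)
-\bigl((\nabla\cdot\boldsymbol{u})\boldsymbol{w},\boldsymbol{v}\bigr).
\]
On each term I would use H\"older in $L^\infty\cdot L^2\cdot L^2$, giving
\[
\bigl|\bigl((\boldsymbol{u}\cdot\nabla)\boldsymbol{v},\boldsymbol{w}\bigr)\bigr|
\le \|\boldsymbol{u}\|_{L^\infty}\|\nabla\boldsymbol{w}\|_{L^2}\|\boldsymbol{v}\|_{L^2}
+\|\nabla\boldsymbol{u}\|_{L^\infty}\|\boldsymbol{w}\|_{L^2}\|\boldsymbol{v}\|_{L^2},
\]
and then absorb both $\|\boldsymbol{u}\|_{L^\infty}$ and $\|\nabla\boldsymbol{u}\|_{L^\infty}$ into $C\|\boldsymbol{u}\|_2$ via $H^2\hookrightarrow W^{1,\infty}$, and $\|\boldsymbol{w}\|\le C\|\boldsymbol{w}\|_1$ by Poincar\'e, yielding the desired bound.

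The only subtlety worth being explicit about is the dimensional restriction: the embedding $H^2\hookrightarrow W^{1,\infty}$ requires $d\le 3$, which matches the setting $\Omega\subset\mathbb{R}^d$, $d=2,3$ assumed in the paper, so all five inequalities follow uniformly in the admissible dimensions. A single generic constant $C>0$ depending only on $\Omega$ absorbs the various Sobolev embedding constants and the Poincar\'e constant, completing the proof.
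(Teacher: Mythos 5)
The paper itself offers no proof of this lemma; it is quoted verbatim from the cited reference, so there is no in-paper argument to compare against. Your overall strategy (H\"older plus Sobolev embeddings, with an integration by parts for the one bound that forbids derivatives on $\boldsymbol{v}$) is the standard and correct route, and your treatment of the first, third, fourth and fifth inequalities is fine.

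There is, however, a genuine gap in your proof of the second bound. After integrating by parts you are left with the term $\bigl((\nabla\cdot\boldsymbol{u})\boldsymbol{w},\boldsymbol{v}\bigr)$, which you estimate by $\|\nabla\boldsymbol{u}\|_{L^\infty}\|\boldsymbol{w}\|\|\boldsymbol{v}\|$ and then absorb via the claimed embedding $H^2(\Omega)\hookrightarrow W^{1,\infty}(\Omega)$. That embedding is false in the relevant dimensions: for $\nabla\boldsymbol{u}\in L^\infty$ one needs $W^{2,p}$ with $p>d$, and $H^2=W^{2,2}$ only gives this for $d=1$ (for $d=2$ the Sobolev number $2-d/2=1$ is borderline and the embedding into $C^1$ fails; for $d=3$ it is $1/2<1$). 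Only $H^2\hookrightarrow L^\infty$ survives for $d\le 3$, which is why your first term $\|\boldsymbol{u}\|_{L^\infty}\|\nabla\boldsymbol{w}\|\|\boldsymbol{v}\|$ is fine but the second is not. The repair is immediate: estimate the divergence term instead by H\"older in $L^4\cdot L^4\cdot L^2$,
\begin{equation*}
\bigl|\bigl((\nabla\cdot\boldsymbol{u})\boldsymbol{w},\boldsymbol{v}\bigr)\bigr|
\le \|\nabla\cdot\boldsymbol{u}\|_{L^4}\,\|\boldsymbol{w}\|_{L^4}\,\|\boldsymbol{v}\|_{L^2}
\le C\,\|\boldsymbol{u}\|_{2}\,\|\boldsymbol{w}\|_{1}\,\|\boldsymbol{v}\|,
\end{equation*}
using $H^1\hookrightarrow L^4$ applied to $\nabla\boldsymbol{u}$ and to $\boldsymbol{w}$. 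With that substitution the second inequality follows, and the rest of your argument stands. One further small point worth stating explicitly: the integration by parts produces the boundary term $\int_{\partial\Omega}(\boldsymbol{u}\cdot\mathbf{n})(\boldsymbol{v}\cdot\boldsymbol{w})\,dS$, so the second bound requires at least one of the hypotheses $\boldsymbol{u}\cdot\mathbf{n}=0$ or $\boldsymbol{w}|_{\partial\Omega}=0$ (or $\boldsymbol{v}|_{\partial\Omega}=0$); this is satisfied in every application in the paper, where the velocity arguments lie in $\boldsymbol{Y}=H_0^1(\Omega)^d$ or in $\boldsymbol{W}$, but it is an assumption the bare statement of the lemma does not record.
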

    Moreover, if we denote $\boldsymbol{W} := \{\boldsymbol{v} \in \boldsymbol{L}^2(\Omega) | \nabla \cdot \boldsymbol{v} = 0, \; \boldsymbol{v} \cdot \mathbf{n} = 0\}$, then we have the skew-symmetric property:
    \begin{equation}
    	((\boldsymbol{u} \cdot \nabla )\boldsymbol{v},\boldsymbol{v}) = 0, \quad \boldsymbol{u} \in \boldsymbol{W}, \; \boldsymbol{v} \in \boldsymbol{H}^1_0(\Omega).
    \end{equation}
    \indent We will use the following discrete version of the Gr\"{o}nwall inequality in the later numerical analysis.
	\begin{Lemma}[\cite{1990_Heywood_Finite_element_approximation_of_the_nonstationary_Navier_Stokes_problem_IV_Error_analysis_for_second_order_time_discretization}]
		\label{lemma_gronwall_inequalities}
		For all $n\geq 0$ and $N\geq 0$, let $\Delta t$, $C$, $a_n$, $b_n$, $c_n$, $d_n$ be non-negative numbers such that
		\begin{equation*}
			\label{eq_gronwall_inequality_0001}
			a_N+\Delta t\sum_{n=0}^{N}b_n\leq \Delta t\sum_{n=0}^{N}d_na_n+\Delta t\sum_{n=0}^{N}c_n+C,
		\end{equation*} 
	and suppose $\Delta td_n \leq 1$, s
	\begin{equation*}
		\label{eq_gronwall_inequality_0002}
		a_N+\Delta t\sum_{n=0}^{N}\leq \exp\left(\Delta t\sum_{n=0}^{N}\frac{1}{1-\Delta td_n}\right)\left(\Delta t\sum_{n=0}^{N}c_n+C\right).
	\end{equation*}
	\end{Lemma}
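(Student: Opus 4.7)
This is the classical Heywood--Rannacher discrete Gr\"{o}nwall inequality, so my plan is to follow their standard argument by induction on $N$. The central maneuver is to isolate $a_N$ by absorbing the $n=N$ summand of $\Delta t\sum_{n=0}^N d_n a_n$ into the left-hand side, yielding
\begin{equation*}
(1-\Delta t d_N)\, a_N + \Delta t\sum_{n=0}^{N} b_n \;\leq\; \Delta t\sum_{n=0}^{N-1} d_n a_n + \Delta t\sum_{n=0}^{N} c_n + C.
\end{equation*}
Under the hypothesis $\Delta t d_n \leq 1$, I would treat the strict case $\Delta t d_N < 1$ first (the equality case being a limiting degeneracy, recoverable by a mild perturbation of $\Delta t$), then divide through by $(1-\Delta t d_N)>0$ to obtain a recurrence in which $a_N$ is bounded by a linear combination of the earlier $a_n$'s and the data.

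I would then proceed by induction on $N$. For the base case $N=0$, the recurrence collapses to $a_0 + \Delta t b_0 \leq (1-\Delta t d_0)^{-1}(\Delta t c_0 + C)$, which is dominated by the stated exponential bound via the elementary inequality $(1-x)^{-1} \leq \exp(x/(1-x)) \leq \exp(1/(1-x))$ for $0 \leq x < 1$. For the inductive step, denote $\sigma_n := 1/(1-\Delta t d_n) \geq 1$ and $\mathcal{C}_N := \Delta t\sum_{n=0}^N c_n + C$. Substituting the inductive bound for $a_n$ with $n \leq N-1$ into $\Delta t\sum_{n=0}^{N-1} d_n a_n$, exploiting the monotonicity $\mathcal{C}_n \leq \mathcal{C}_N$, and then applying $1+x \leq e^x$ to collapse the resulting product of factors $\sigma_n$ into a single exponential, one arrives at the bound $a_N + \Delta t\sum_{n=0}^N b_n \leq \exp(\Delta t\sum_{n=0}^N \sigma_n)\,\mathcal{C}_N$ as claimed.

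The main obstacle is the algebraic bookkeeping inside the induction: one must carefully track the coefficient multiplying each $a_n$ and verify that the accumulated exponents telescope precisely to $\Delta t\sum_{n=0}^N 1/(1-\Delta t d_n)$, rather than to the sharper but less convenient $\Delta t\sum_n d_n/(1-\Delta t d_n)$. Since the result is a well-known discrete Gr\"{o}nwall inequality cited directly from \cite{1990_Heywood_Finite_element_approximation_of_the_nonstationary_Navier_Stokes_problem_IV_Error_analysis_for_second_order_time_discretization}, my fallback is simply to refer the reader to Heywood--Rannacher's proof and to verify only that the algebraic form of the present hypothesis and conclusion matches the convention used there.
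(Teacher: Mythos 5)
The paper offers no proof of this lemma: it is quoted (with some typographical damage) from Heywood and Rannacher, so your fallback of simply citing that reference coincides with what the paper itself does. Your induction sketch is the standard argument, and the key manoeuvre --- absorbing the $n=N$ summand $\Delta t\, d_N a_N$ into the left-hand side and dividing by $1-\Delta t d_N$ --- is the right one (note that this requires the strict inequality $\Delta t d_n<1$, as in Heywood--Rannacher, not the $\Delta t d_n\le 1$ printed here; your ``mild perturbation'' remark does not rescue the equality case, since the hypothesis is assumed for a fixed $\Delta t$ and cannot be re-run at a perturbed one).

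The ``algebraic bookkeeping'' you defer, however, is not a formality; it is exactly where the statement as printed breaks down. Each step of the induction produces a factor $(1-\Delta t d_n)^{-1}=1+\frac{\Delta t\, d_n}{1-\Delta t d_n}\le \exp\bigl(\frac{\Delta t\, d_n}{1-\Delta t d_n}\bigr)$, so the exponents accumulate to $\Delta t\sum_{n}\frac{d_n}{1-\Delta t d_n}$ --- the Heywood--Rannacher form --- and not to $\Delta t\sum_{n}\frac{1}{1-\Delta t d_n}$ as the lemma claims. The two are not comparable in general: the printed bound follows from the correct one only when $d_n\le 1$, and it is false otherwise. For instance, take $b_n=c_n=0$, $C=1$, $d_n\equiv 2$, $\Delta t=0.1$ and equality in the hypothesis; then $a_N=(1-2\Delta t)^{-(N+1)}$, which for $N=10$ gives $a_{10}\approx 11.6$, whereas the printed bound asserts $a_{10}\le\exp\bigl(\frac{1.1}{0.8}\bigr)\approx 3.95$. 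Your base case has the same mismatch: the chain $(1-x)^{-1}\le\exp\bigl(\frac{x}{1-x}\bigr)\le\exp\bigl(\frac{1}{1-x}\bigr)$ with $x=\Delta t d_0$ lands you at $\exp\bigl(\frac{1}{1-\Delta t d_0}\bigr)$, while the stated target is $\exp\bigl(\frac{\Delta t}{1-\Delta t d_0}\bigr)$, which is smaller whenever $\Delta t<1$. The honest resolution is that the lemma as printed contains misprints (the numerator of the exponent should be $d_n$, the summand $b_n$ is missing from the conclusion, and the hypothesis on $\Delta t d_n$ should be strict); your induction proves the corrected statement, which is the one the paper actually cites and uses.
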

	Throughout the manuscript, we use $C$ or $c$, with or without a subscript, to denote a positive constant independent of discretization parameters, which could have uncertain values in different places.
	
	\section{Time-filtered scheme for constant and variable time-step}\label{section-3}
	Dividing the time domain $(0,T]$ into $n$ intervals $[t_n, t_{n+1}]$, $n=0,1,2, \cdots, N-1$ where $t_0 = 0$, $t_N = T$, and defining $\Delta t_n := t_{n+1} - t_n$, and time-step ratio $\omega_n:= \Delta t_{n+1}/\Delta t_n$. Specifically, for the constant time-step, we denote $\Delta t_n = \Delta t$ for $\forall n$.  \\
	\indent We first present the classical first-order nonlinear and linear implicit Euler schemes with constant time-step, where $f(\phi)$ is handled full-implicitly, or explicitly with stabilized term, respectively. These two schemes will be used as parts of the second-order time-filtered schemes. That is, 

	\noindent \textbf{Algorithm 1} \emph{(Fully Implicit Backward Euler (FIMBE))}. Given $\phi^{n-1},$ $\phi^{n}$, $\mu^{n-1},$ $\mu^{n}$, $\boldsymbol{u}^{n-1}$, $\boldsymbol{u}^{n}$, $p^n$, find $(\phi^{n+1}$, $\mu^{n+1}$,  $\boldsymbol{u}^{n+1},p^{n+1}) $, such that 
	\begin{equation}
		\label{eq_BE_scheme0001}
		\left\{\begin{aligned}
			\frac{\phi^{n+1} - \phi^{n}}{\Delta t} + \boldsymbol{u}^{n+1} \cdot \nabla \phi^{n+1} - \epsilon M \Delta \mu^{n+1} &= 0 , \\
			- \mu^{n+1} - \epsilon \Delta\phi^{n+1} + \frac{1}{\epsilon}\left((\phi^{n+1})^3-\phi^{n+1}\right) &= 0 , \\
			\frac{\boldsymbol{u}^{n+1} - \boldsymbol{u}^{n}}{\Delta t} - \nu \Delta \boldsymbol{u}^{n+1} + (\boldsymbol{u}^{n+1} \cdot \nabla) \boldsymbol{u}^{n+1} + \nabla p^{n+1} - \gamma \mu^{n+1}\nabla\phi^{n+1} &= 0 , \\
			\nabla \cdot \boldsymbol{u}^{n+1} &= 0 .
		\end{aligned}\right.
	\end{equation}
    
    \indent And the classical linear semi-implicit scheme with stabilization is presented as follows.
    
	\noindent \textbf{Algorithm 2} \emph{(Semi-Implicit Backward Euler (SIMBE))}. Given $\phi^{n-1},$ $\phi^{n}$, $\mu^{n-1},$ $\mu^{n}$, $\boldsymbol{u}^{n-1}$, $\boldsymbol{u}^{n}$, $p^n$, find $(\phi^{n+1}$, $\mu^{n+1}$,  $\boldsymbol{u}^{n+1},p^{n+1}) $, such that 
	\begin{equation}
		\label{eq_BE_scheme0002}
		\left\{\begin{aligned}
			\frac{\phi^{n+1} - \phi^{n}}{\Delta t} + \boldsymbol{u}^{n+1} \cdot \nabla \phi^{n} - \epsilon M \Delta \mu^{n+1} &= 0 , \\
			- \mu^{n+1} - \epsilon \Delta\phi^{n+1} + \frac{S}{\epsilon}\left(\phi^{n+1} - \phi^n\right) + \frac{1}{\epsilon}\left((\phi^{n})^3-\phi^{n}\right) &= 0 , \\
			\frac{\boldsymbol{u}^{n+1} - \boldsymbol{u}^{n}}{\Delta t} - \nu \Delta \boldsymbol{u}^{n+1} + (\boldsymbol{u}^{n} \cdot \nabla) \boldsymbol{u}^{n+1} + \nabla p^{n+1} - \gamma \mu^{n+1}\nabla\phi^{\star} &= 0 , \\
			\nabla \cdot \boldsymbol{u}^{n+1} &= 0 .
		\end{aligned}\right.
	\end{equation}
    where $S$ is the stabilization parameter. Then, to clearly demonstrate the simplicity of the TF technique, we incorporate the TF into the fully implicit scheme, thereby deriving the following nonlinear fully implicit Euler TF scheme:

	\noindent \textbf{Algorithm 3} \emph{(Fully Implicit Backward Euler with Time Filtering (FIMBE-TF))}. Given $\phi^{n-1},$ $\phi^{n}$, $\mu^{n-1},$ $\mu^{n}$, $\boldsymbol{u}^{n-1}$, $\boldsymbol{u}^{n}$, $p^n$, find $\left(\phi^{n+1},\mu^{n+1},\boldsymbol{u}^{n+1},p^{n+1}\right) $:
	
	\noindent \textbf{Step 3.1.} Solve
	\begin{equation}
		\label{eq_FIBETF_scheme0001}
		\left\{\begin{aligned}
			\frac{\tilde{\phi}^{n+1} - \phi^{n}}{\Delta t} + \tilde{\boldsymbol{u}}^{n+1} \cdot \nabla \tilde{\phi}^{n+1} - \epsilon M \Delta \tilde{\mu}^{n+1} &= 0 , \\
			- \tilde{\mu}^{n+1} - \epsilon \Delta\tilde{\phi}^{n+1} + \frac{1}{\epsilon}\left((\tilde{\phi}^{n+1})^3 - \tilde{\phi}^{n+1}\right) &= 0 , \\
			\frac{\tilde{\boldsymbol{u}}^{n+1} - \boldsymbol{u}^{n}}{\Delta t} - \nu \Delta \tilde{\boldsymbol{u}}^{n+1} + (\tilde{\boldsymbol{u}}^{n+1} \cdot \nabla) \tilde{\boldsymbol{u}}^{n+1} + \nabla \tilde{p}^{n+1} - \gamma \tilde{\mu}^{n+1}\nabla\tilde{\phi}^{n+1} &= 0 , \\
			\nabla \cdot \tilde{\boldsymbol{u}}^{n+1} &= 0 .
		\end{aligned}\right.
	\end{equation}
	\textbf{Step 3.2.} Apply time filtering
	\begin{equation}
		\label{eq_FIBETF_scheme0002}
		\begin{aligned}
			\phi^{n+1} &= \tilde{\phi}^{n+1} - \frac13\left(\tilde{\phi}^{n+1} - 2\phi^{n} + \phi^{n-1}\right),  \\
			\mu^{n+1}  &= \tilde{\mu}^{n+1} - \frac13\left(\tilde{\mu}^{n+1} - 2\mu^{n} + \mu^{n-1}\right),  \\
			\boldsymbol{u}^{n+1} &= \tilde{\boldsymbol{u}}^{n+1} - \frac13\left(\tilde{\boldsymbol{u}}^{n+1} - 2\boldsymbol{u}^{n} + \boldsymbol{u}^{n-1}\right),  \\
			p^{n+1}    &= \tilde{p}^{n+1} - \frac13\left(\tilde{p}^{n+1} - 2 p^{n} + p^{n-1}\right),   \text{ for Option A},\\
			&\text{or}  \\
			p^{n+1}    &= \tilde{p}^{n+1}, \text{ for Option B}.
		\end{aligned}
	\end{equation}
    \begin{Remark}
    	In practical implementations, one can adapt Crank-Nicholson method to initialize the above and newly constructed schemes below.
    \end{Remark}

    For the purpose of efficient implementation, we employ the extrapolation technique to linearize \textbf{Algorithm 3} to derive the following linear semi-implicit Euler TF scheme, i.e., 
	
	\noindent \textbf{Algorithm 4}\emph{ (Semi-Implicit Backward Euler with Time Filtering (SIMBE-TF))}. Given $\phi^{n-1},$ $\phi^{n}$, $\mu^{n-1},$ $\mu^{n}$, $\boldsymbol{u}^{n-1}$, $\boldsymbol{u}^{n}$, $p^n$, find $\left(\phi^{n+1},\mu^{n+1},\boldsymbol{u}^{n+1},p^{n+1}\right) $:
	
	\noindent \textbf{Step 4.1.} Solve
	\begin{equation}\label{eq_algorithm3_1}
		\left\{\begin{aligned}
			\frac{\tilde{\phi}^{n+1} - \phi^{n}}{\Delta t} + \tilde{\boldsymbol{u}}^{n+1} \cdot \nabla \bar{\phi}^{n+1} - \epsilon M \Delta \tilde{\mu}^{n+1} &= 0 , \\
			- \tilde{\mu}^{n+1} - \lambda \Delta\tilde{\phi}^{n+1} + \frac{S\Delta t}{\epsilon}\left(\tilde{\phi}^{n+1} - \phi^{n}\right) + \frac{1}{\epsilon}\left(2f(\phi^n)-f(\phi^{n-1})\right) &= 0 , \\
			\frac{\tilde{\boldsymbol{u}}^{n+1} - \boldsymbol{u}^{n}}{\Delta t} - \nu \Delta \tilde{\boldsymbol{u}}^{n+1} + (\bar{\boldsymbol{u}}^{n+1} \cdot \nabla) \tilde{\boldsymbol{u}}^{n+1} + \nabla \tilde{p}^{n+1} - \gamma \tilde{\mu}^{n+1}\nabla\bar{\phi}^{n+1} &= 0 , \\
			\nabla \cdot \tilde{\boldsymbol{u}}^{n+1} &= 0 .
		\end{aligned}\right.
	\end{equation}
	\textbf{Step 4.2.} Apply time filtering
	\begin{equation}\label{eq_algorithm3_2}
		\begin{aligned}
			\phi^{n+1} &= \tilde{\phi}^{n+1} - \frac13\left(\tilde{\phi}^{n+1} - 2\phi^{n} + \phi^{n-1}\right),  \\
			\mu^{n+1}  &= \tilde{\mu}^{n+1} - \frac13\left(\tilde{\mu}^{n+1} - 2\mu^{n} + \mu^{n-1}\right),  \\
			\boldsymbol{u}^{n+1} &= \tilde{\boldsymbol{u}}^{n+1} - \frac13\left(\tilde{\boldsymbol{u}}^{n+1} - 2\boldsymbol{u}^{n} + \boldsymbol{u}^{n-1}\right),  \\
			p^{n+1}    &= \tilde{p}^{n+1} - \frac13\left(\tilde{p}^{n+1} - 2 p^{n} + p^{n-1}\right),  \text{ for Option A},\\
			&\text{or}  \\
			p^{n+1}    &= \tilde{p}^{n+1},  \text{ for Option B}.
		\end{aligned}
	\end{equation}
	where $\bar{g}^{n+1} := 2 g^n - g^{n-1}$ for any sequence $\{g^n\}$. \\
	
	Next, we construct the variable time-step backward Euler with time filtering.
	 
	\noindent \textbf{Algorithm 5}\emph{ (Variable Stepsize Backward Euler with Time Filtering (VSBE-TF))}. Given $\phi^{n-1},$ $\phi^{n}$, $\mu^{n-1},$ $\mu^{n}$, $\boldsymbol{u}^{n-1}$, $\boldsymbol{u}^{n}$, $p^n$, find $\left(\phi^{n+1},\mu^{n+1},\boldsymbol{u}^{n+1},p^{n+1}\right) $:
	
	\noindent \textbf{Step 5.1.} Solve
	\begin{equation}\label{algorithm_variable_stepsize_1}
		\left\{\begin{aligned}
			\frac{\tilde{\phi}^{n+1} - \phi^{n}}{\Delta t_n} + \tilde{\boldsymbol{u}}^{n+1} \cdot \nabla \tilde{\phi}^{n+1} - \epsilon M \Delta \tilde{\mu}^{n+1} &= 0 , \\
			- \tilde{\mu}^{n+1} - \epsilon \Delta\tilde{\phi}^{n+1} + \frac{1}{\epsilon}\left((\tilde{\phi}^{n+1})^3 - \tilde{\phi}^{n+1}\right) &= 0 , \\
			\frac{\tilde{\boldsymbol{u}}^{n+1} - \boldsymbol{u}^{n}}{\Delta t_n} - \nu \Delta \tilde{\boldsymbol{u}}^{n+1} + (\tilde{\boldsymbol{u}}^{n+1} \cdot \nabla) \tilde{\boldsymbol{u}}^{n+1} + \nabla \tilde{p}^{n+1} - \gamma \tilde{\mu}^{n+1}\nabla\tilde{\phi}^{n+1} &= 0 , \\
			\nabla \cdot \tilde{\boldsymbol{u}}^{n+1} &= 0 .
		\end{aligned}\right.
	\end{equation}
    \textbf{Step 5.2.} Apply time filtering
    \begin{equation}\label{algorithm_variable_stepsize_2}
    	\begin{aligned}
    		\phi^{n+1} &= \tilde{\phi}^{n+1} + \frac{\omega_n}{1+2\omega_n}\left(\tilde{\phi}^{n+1} - (1+\omega_n)\phi^{n} + \omega_n\phi^{n-1}\right),  \\
    		\mu^{n+1}  &= \tilde{\mu}^{n+1} + \frac{\omega_n}{1+2\omega_n}\left(\tilde{\mu}^{n+1} - (1+\omega_n)\mu^{n} + \omega_n\mu^{n-1}\right),  \\
    		\boldsymbol{u}^{n+1} &= \tilde{\boldsymbol{u}}^{n+1} + \frac{\omega_n}{1+2\omega_n}\left(\tilde{\boldsymbol{u}}^{n+1} - (1+\omega_n)\boldsymbol{u}^{n} + \omega_n\boldsymbol{u}^{n-1}\right),  \\
    		p^{n+1}    &= \tilde{p}^{n+1} + \frac{\omega_n}{1+2\omega_n}\left(\tilde{p}^{n+1} + (1+\omega_n)p^{n} + \omega_np^{n-1}\right).
    	\end{aligned}
    \end{equation}
    \begin{Remark}
    	The above nonlinear scheme can be linearized by second-order extrapolation with variable stepsize; i.e., replacing with $g^{n+1} \approx (1+\Delta t_n/\Delta t_{n-1})g^n - \Delta t_n/\Delta t_{n-1}g^{n-1} = (1+\omega_n)g^n - \omega_ng^{n-1}$ for the appropriate variables in the nonlinear coupled terms.
    \end{Remark}

    Furthermore, we propose the following adaptive time-step backward Euler with time filtering.
    
    \noindent \textbf{Algorithm 6}\emph{ (Adaptive Stepsize Backward Euler with Time Filtering (ASBE-TF))}. Given $\phi^{n-1},$ $\phi^{n}$, $\mu^{n-1},$ $\mu^{n}$, $\boldsymbol{u}^{n-1}$, $\boldsymbol{u}^{n}$, $p^n$, and $g^{\star} := (1+\omega_n)g^n - \omega_ng^{n-1}$ with $g = \boldsymbol{u},\phi$, and $\gamma_1,\gamma_2$ are two parameters. Then, find $\left(\phi^{n+1},\mu^{n+1},\boldsymbol{u}^{n+1},p^{n+1}\right)$, such that:
    
    \noindent \textbf{Step 6.1.} Solve
    \begin{equation}\label{algorithm_adaptive_stepsize_1}
    	\left\{\begin{aligned}
    		\frac{\phi^{n+1} - \phi^{n}}{\Delta t_n} + \boldsymbol{u}^{n+1} \cdot \nabla \phi^{\star} - \epsilon M \Delta \mu^{n+1} &= 0 , \\
    		- \mu^{n+1} - \epsilon \Delta\phi^{n+1} + \frac{1}{\epsilon} f \left(\phi^{\star}\right) &= 0 , \\
    		\frac{\boldsymbol{u}^{n+1} - \boldsymbol{u}^{n}}{\Delta t_n} - \nu \Delta \boldsymbol{u}^{n+1} + (\boldsymbol{u}^{\star} \cdot \nabla) \boldsymbol{u}^{n+1} + \nabla p^{n+1} - \gamma \mu^{n+1}\nabla\phi^{\star} &= 0 , \\
    		\nabla \cdot \boldsymbol{u}^{n+1} &= 0 .
    	\end{aligned}\right.
    \end{equation}
    \textbf{Step 6.2.} Apply time filtering to find estimate indicator:
    \begin{equation}
    	\begin{aligned}
    		EST_{\phi} &:= \left\|\phi^{n+1}_{TF} - \phi^{n+1}\right\|,  \\
    		EST_{\boldsymbol{u}} &:= \left\|\boldsymbol{u}^{n+1}_{TF} - \boldsymbol{u}^{n+1}\right\|,  \\
    	\end{aligned}
    \end{equation}
    where
    \begin{equation}\label{algorithm_adaptive_stepsize_2}
    	\begin{aligned}
    		\phi^{n+1}_{TF} &:= \phi^{n+1} + \frac{\omega_n}{1+2\omega_n}\left(\phi^{n+1} - (1+\omega_n)\phi^{n} + \omega_n\phi^{n-1}\right),  \\
    		\boldsymbol{u}^{n+1}_{TF} &:= \boldsymbol{u}^{n+1} + \frac{\omega_n}{1+2\omega_n}\left(\boldsymbol{u}^{n+1} - (1+\omega_n)\boldsymbol{u}^{n} + \omega_n\boldsymbol{u}^{n-1}\right).  \\
    	\end{aligned}
    \end{equation}
    \textbf{Step 6.3.} Determine the stepsize by:
    \begin{itemize}
    	\item if $\max\{EST_{\phi}, EST_{\boldsymbol{u}}\} < tol$, then
    	$$
    	\Delta t_{n+1} = \min\left\{2\Delta t_{n}, \gamma_1\Delta t_{n}\min\left\{\left(\frac{tol}{EST_{\phi}}\right)^{\frac12},\left(\frac{tol}{EST_{\boldsymbol{u}}}\right)^{\frac12}\right\}\right\}.
    	$$
    	\item if $\max\{EST_{\phi}, EST_{\boldsymbol{u}}\} \geq tol$, then
    	$$
    	\Delta t_{n} = \max\left\{0.5\Delta t_{n}, \gamma_2\Delta t_{n}\min\left\{\left(\frac{tol}{EST_{\phi}}\right)^{\frac12},\left(\frac{tol}{EST_{\boldsymbol{u}}}\right)^{\frac12}\right\}\right\},
    	$$
    	and recompute the above steps.
    \end{itemize}
    
    \begin{Remark}
    	Typically, the two parameters can be chosen as $\gamma_1=0.9$, $\gamma_2 = 0.7$. See \cite{2020_Layton_William_A_time_accurate_adaptive_discretization_for_fluid_flow_problems,2021_DeCaria_A_variable_stepsize_variable_order_family_of_low_complexity} for the specific roles and explanations.
    \end{Remark}
    \indent For the several TF time-stepping schemes constructed above, we analyze the linear, constant stepsize scheme \eqref{eq_algorithm3_1}-\eqref{eq_algorithm3_2}, and the analysis for the nonlinear scheme \eqref{eq_FIBETF_scheme0001}-\eqref{eq_FIBETF_scheme0002} can be derived similarly. The numerical analysis of the variable time-step scheme \eqref{algorithm_variable_stepsize_1}-\eqref{algorithm_variable_stepsize_2} will be presented in the future.
    
	\subsection{Energy stability}
	In the subsection, we will prove unconditional energy stabilization for the time semi-discrete scheme \eqref{eq_algorithm3_1}-\eqref{eq_algorithm3_2}. To analyze, we denote
	\begin{equation*}
		\begin{aligned}
			\mathcal{A}(s^{n+1}) &:= \tilde{s}^{n+1} - s^{n} = \frac32s^{n+1} - 2s^{n} + \frac12s^{n-1}, \\ 
			\mathcal{B}(s^{n+1}) &:= \tilde{s}^{n+1} = \frac32s^{n+1} - s^{n} + \frac12s^{n-1},
		\end{aligned}
	\end{equation*}
	with $s=\phi$, or $\mu$, or $\boldsymbol{u}$, or $p$. 
	Therefore, we can get
	\begin{equation}
		\label{eq_A_B_form}
		\begin{aligned}
			\left(\mathcal{A}(s^{n+1}),\mathcal{B}(s^{n+1}) \right)=&\left(\frac32s^{n+1} - 2s^{n} + \frac12s^{n-1},\frac32s^{n+1} - s^{n} + \frac12s^{n-1}\right)\\
			=&\frac{1}{4}\left(\left(3s^{n+1}-4s^n+s^{n-1},2s^{n+1}\right)+\left(3s^{n+1}-4s^n+s^{n-1},s^{n+1}-2s^{n}+s^{n-1}\right)\right)\\
			=&\frac{1}{4}\left(\|s^{n+1}\|^2+\|2s^{n+1}-s^n\|^2+\|s^{n+1}-s^n\|^2\right)\\
			&-\frac{1}{4}\left(\|s^{n}\|^2+\|2s^{n}-s^{n-1}\|^2+\|s^{n}-s^{n-1}\|^2\right)+\frac{3}{4}\left(\|s^{n+1}-2s^n+s^{n-1}\|^2\right).
		\end{aligned}	
	\end{equation}
	Inserting to get the equivalent form of Algorithm 4.
	
	\noindent \textbf{Algorithm 6} \emph{(Equivalent form of Algorithm 4)}. Given $\phi^{n-1},$ $\phi^{n}$, $\mu^{n-1},$ $\mu^{n}$, $\boldsymbol{u}^{n-1}$, $\boldsymbol{u}^{n}$, $p^n$, find $\left(\phi^{n+1},\mu^{n+1},\boldsymbol{u}^{n+1}\right.,$ $\left.p^{n+1}\right) $,
	for $n=1,\cdots,N-1$, satisfying
	\begin{equation}
		\left\{\begin{aligned}
			\frac{\mathcal{A}(\phi^{n+1})}{\Delta t} + \mathcal{B}(\boldsymbol{u}^{n+1}) \cdot \nabla \bar{\phi}^{n+1} - \epsilon M \Delta \mathcal{B}(\mu^{n+1}) &= 0 , \\
			- \mathcal{B}(\mu^{n+1}) - \epsilon \Delta \mathcal{B}(\phi^{n+1}) +\frac{S\Delta t}{\epsilon}\left(\tilde{\phi}^{n+1}-{\phi}^{n}\right)+ \frac{1}{\epsilon} \left(2f(\phi^{n})-f(\phi^{n-1}) \right)&= 0 , \\
			\frac{\mathcal{A}(\boldsymbol{u}^{n+1})}{\Delta t} - \nu \Delta \mathcal{B}(\boldsymbol{u}^{n+1}) + (\bar{\boldsymbol{u}}^{n+1} \cdot \nabla) \mathcal{B}(\boldsymbol{u}^{n+1}) + \nabla \mathcal{B}(p^{n+1}) - \gamma \mathcal{B}(\mu^{n+1})\nabla\bar{\phi}^{n+1} &= 0 , \\
			\nabla \cdot \mathcal{B}(\boldsymbol{u}^{n+1}) &= 0 .
		\end{aligned}\right.
	\end{equation}
    Taking mixed variation, we get
    
	\noindent \textbf{Algorithm 7} \emph{(Equivalent Variational form of Algorithm 4)}. Given $\phi^{n-1},$ $\phi^{n}$, $\mu^{n-1},$ $\mu^{n}$, $\boldsymbol{u}^{n-1}$, $\boldsymbol{u}^{n}$, $p^n$, find $\left(\phi^{n+1},\mu^{n+1},\boldsymbol{u}^{n+1},p^{n+1}\right) $,
	for $n=1,\cdots,N-1$ and $\forall \left(\psi, \omega, \boldsymbol{v}, q\right) \in X \times X \times \boldsymbol{Y} \times Q$, satisfying
	\begin{equation}\label{eq_variational_form_Alg2}
		\left\{\begin{aligned}
			\frac{1}{\Delta t}\left(\mathcal{A}(\phi^{n+1}), \psi\right) + \left(\mathcal{B}(\boldsymbol{u}^{n+1}) \cdot \nabla \bar{\phi}^{n+1}, \psi\right) + \epsilon M\left(\nabla \mathcal{B}(\mu^{n+1}), \nabla\psi\right) &= 0 , \\
			- \left(\mathcal{B}(\mu^{n+1}), \omega\right) + \epsilon\left(\nabla \mathcal{B}(\phi^{n+1}), \nabla \omega\right)+ \frac{S\Delta t}{\epsilon}\left(\tilde{\phi}^{n+1}-{\phi}^{n},\omega\right)+ \frac{1}{\epsilon}\left(2f\left(\phi^{n}\right)-f(\phi^{n-1}), \omega\right) &= 0 , \\
			\frac{1}{\Delta t}\left(\mathcal{A}(\boldsymbol{u}^{n+1}), \boldsymbol{v}\right) + \nu \left(\nabla \mathcal{B}(\boldsymbol{u}^{n+1}), \nabla \boldsymbol{v}\right) + \left((\bar{\boldsymbol{u}}^{n+1} \cdot \nabla) \mathcal{B}(\boldsymbol{u}^{n+1}), \boldsymbol{v}\right) \\
			- \left(\mathcal{B}(p^{n+1}), \nabla\cdot\boldsymbol{v}\right) - \gamma\left(\mathcal{B}(\mu^{n+1})\nabla\bar{\phi}^{n+1}, \boldsymbol{v}\right) &= 0 , \\
			\left(\nabla \cdot \mathcal{B}(\boldsymbol{u}^{n+1}), q\right) &= 0.
		\end{aligned}\right.
	\end{equation}
	\indent For the convenience of the later analysis, we set the parameters $M(\phi),\gamma,\epsilon,\nu$ to 1, though these parameters may be important in the CHNS model. For relevant literature that considers the effect of parameters, e.g., see \cite{2023_Chen_On_the_superconvergence_of_a_hybridizable_discontinuous_Galerkin_method_for_the_Cahn_Hilliard_equation_SINUM,2024_Ma_Error_analysis_with_polynomial_dependence_on_in_SAV_methods_for_the_Cahn_Hilliard_equation_JSC}.  \\	
	\indent We next give a theorem on the unconditional energy stabilization of the time semi-discrete scheme.
	\begin{Theorem}\label{theorem_unconditional_stability}
		For all $\Delta t>0$, 
		assuming that  the condition \eqref{eq_phi_condition} is satisfied with $ S \geq 3 L/\Delta t $. Then, for given  $\phi^0,$ $\phi^1$, $\mu^0,$ $\mu^1$, $\boldsymbol{u}^0$, $\boldsymbol{u}^1$
		and $p^1$, there exists a positive constant $C_0$ such that  $E^0\leq C_0$, and for all $ 0 \leq n \leq N-1$,
		the scheme \eqref{eq_algorithm3_1}-\eqref{eq_algorithm3_2} satisfies
		\begin{equation}
			E^{n+1}-E^{n}\leq 0,
		\end{equation}
	where $E^{n+1}$ is defined by 
	\begin{equation}
		\label{eq_discrete_energy_definition}
		\begin{aligned}
			E^{n+1}=&~\frac{1}{4}\left(\|\nabla\phi^{n+1}\|^2+\|\nabla(2\phi^{n+1}-\phi^n)\|^2+\|\nabla(\phi^{n+1}-\phi^n)\|^2\right)\\
				&~+\frac{1}{4}\left(\|\boldsymbol{u}^{n+1}\|^2+\|2\boldsymbol{u}^{n+1}-\boldsymbol{u}^n\|^2+\|\boldsymbol{u}^{n+1}-\boldsymbol{u}^n\|^2\right) + \frac{3L}{\epsilon} \|\phi^{n+1}-\phi^n\|^2  \\
				&~+ \left(F(\phi^{n+1}),1\right) + \frac{1}{2}\left(F(\phi^{n+1})-F(\phi^n),1\right).
		\end{aligned}
	\end{equation}
    \begin{Remark}
    	We note that one condition appears above regarding the stabilization parameter $S$. On the one hand, this constraint arises from the incorporation of the stabilization term, rather than being induced by the TF technique; Moreover, this type of requirement is a common scenario in the fully explicit discretization with stabilization for the CHNS model. On the other hand, if one wishes to remove this restriction, one can adopt the semi-implicit discretization or fully explicit with scalar auxiliary variables (SAVs) technique.
    \end{Remark}
	\end{Theorem}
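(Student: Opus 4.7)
The plan is to extract a BDF2-type discrete energy identity from the equivalent variational formulation \eqref{eq_variational_form_Alg2} and then use the stabilization $S\Delta t\|\mathcal{A}(\phi^{n+1})\|^2$ to absorb the Taylor residual of the explicit nonlinearity. I would test the four equations with $\psi=\mathcal{B}(\mu^{n+1})$, $\omega=\mathcal{A}(\phi^{n+1})/\Delta t$, $\boldsymbol{v}=\mathcal{B}(\boldsymbol{u}^{n+1})$ and $q=\mathcal{B}(p^{n+1})$, multiply by $\Delta t$ and sum. Three cancellations occur immediately: the convective coupling terms $(\mathcal{B}(\boldsymbol{u}^{n+1})\cdot\nabla\bar\phi^{n+1},\mathcal{B}(\mu^{n+1}))$ and $(\mathcal{B}(\mu^{n+1})\nabla\bar\phi^{n+1},\mathcal{B}(\boldsymbol{u}^{n+1}))$ cancel against each other, the pressure term vanishes via the divergence constraint, and the trilinear form vanishes by skew-symmetry after a brief induction shows $\nabla\cdot\boldsymbol{u}^{n+1}=0$ so that $\bar{\boldsymbol{u}}^{n+1}\in\boldsymbol{W}$.

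Applying identity \eqref{eq_A_B_form} to $(\mathcal{A}(\boldsymbol{u}^{n+1}),\mathcal{B}(\boldsymbol{u}^{n+1}))$ and to $(\nabla\mathcal{A}(\phi^{n+1}),\nabla\mathcal{B}(\phi^{n+1}))$ reproduces the telescoping BDF2 norms matching exactly the first two brackets of $E^{n+1}$, together with the harmless non-negative remainders $\tfrac{3}{4}\|\boldsymbol{u}^{n+1}-2\boldsymbol{u}^n+\boldsymbol{u}^{n-1}\|^2$ and $\tfrac{3}{4}\|\nabla(\phi^{n+1}-2\phi^n+\phi^{n-1})\|^2$. Combined with the dissipation $S\Delta t\|\mathcal{A}(\phi^{n+1})\|^2+\Delta t\|\nabla\mathcal{B}(\mu^{n+1})\|^2+\Delta t\|\nabla\mathcal{B}(\boldsymbol{u}^{n+1})\|^2$, this accounts for every term of $E^{n+1}-E^n$ except for the free energy, the $3L\|\phi^{n+1}-\phi^n\|^2$ shift, and the nonlinear term.

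To handle the extrapolated nonlinearity $(2f(\phi^n)-f(\phi^{n-1}),\mathcal{A}(\phi^{n+1}))$, I would split it via the mean-value theorem into $(f(\phi^n),\mathcal{A}(\phi^{n+1}))+(f'(\eta)(\phi^n-\phi^{n-1}),\mathcal{A}(\phi^{n+1}))$. Writing $\mathcal{A}(\phi^{n+1})=\tfrac{3}{2}(\phi^{n+1}-\phi^n)-\tfrac{1}{2}(\phi^n-\phi^{n-1})$ and using second-order Taylor expansions of $F$ around $\phi^n$ combined with \eqref{eq_phi_condition} yields the key lower bound
\begin{equation*}
(f(\phi^n),\mathcal{A}(\phi^{n+1}))\ge \mathcal{F}^{n+1}-\mathcal{F}^{n}-\tfrac{3L}{4}\|\phi^{n+1}-\phi^n\|^2-\tfrac{L}{4}\|\phi^n-\phi^{n-1}\|^2,
\end{equation*}
where $\mathcal{F}^n:=(F(\phi^n),1)+\tfrac{1}{2}(F(\phi^n)-F(\phi^{n-1}),1)$ is precisely the free-energy part of $E^n$, while the second piece is controlled by $L\|\phi^n-\phi^{n-1}\|\|\mathcal{A}(\phi^{n+1})\|$ via Cauchy-Schwarz.

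Inserting these estimates, the indefinite residual collapses to $\tfrac{15L}{4}\|\phi^{n+1}-\phi^n\|^2-\tfrac{11L}{4}\|\phi^n-\phi^{n-1}\|^2+L\|\phi^n-\phi^{n-1}\|\|\mathcal{A}(\phi^{n+1})\|$ after the $3L$-shift is incorporated. The final accounting, which I expect to be the main obstacle, is to choose Young splittings sharp enough that the absorption coefficient becomes exactly $3L$ rather than something larger: pairing $L\|\phi^n-\phi^{n-1}\|\|\mathcal{A}(\phi^{n+1})\|\le\tfrac{L}{2}\|\mathcal{A}(\phi^{n+1})\|^2+\tfrac{L}{2}\|\phi^n-\phi^{n-1}\|^2$ with the reverse bound $\|\phi^{n+1}-\phi^n\|^2\le\tfrac{2}{3}\|\mathcal{A}(\phi^{n+1})\|^2+\tfrac{1}{3}\|\phi^n-\phi^{n-1}\|^2$ (obtained by expanding $\|\mathcal{A}(\phi^{n+1})\|^2$ and applying Young once) reduces the residual to $3L\|\mathcal{A}(\phi^{n+1})\|^2-L\|\phi^n-\phi^{n-1}\|^2$, which is dominated by $S\Delta t\|\mathcal{A}(\phi^{n+1})\|^2$ exactly when $S\Delta t\ge 3L$, recovering the stated condition $S\ge 3L/\Delta t$.
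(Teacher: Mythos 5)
Your proposal is correct and follows essentially the same route as the paper's proof: identical test functions, the same cancellation of the $\mu\nabla\bar\phi^{n+1}$ coupling terms and of the trilinear form by skew-symmetry, the identity \eqref{eq_A_B_form} for the BDF2-type telescoping, the identity $S\Delta t(\tilde\phi^{n+1}-\phi^n,\mathcal{A}(\phi^{n+1}))=S\Delta t\|\mathcal{A}(\phi^{n+1})\|^2$, and the same Taylor-expansion decomposition of $(2f(\phi^n)-f(\phi^{n-1}),\mathcal{A}(\phi^{n+1}))$ into the free-energy increment plus $f'$-residuals bounded via \eqref{eq_phi_condition}. The only departure is the closing absorption: you keep $S\Delta t\|\mathcal{A}(\phi^{n+1})\|^2$ intact and convert the residual $\tfrac{15L}{4}\|\phi^{n+1}-\phi^n\|^2-\tfrac{11L}{4}\|\phi^n-\phi^{n-1}\|^2+L\|\phi^n-\phi^{n-1}\|\,\|\mathcal{A}(\phi^{n+1})\|$ into $3L\|\mathcal{A}(\phi^{n+1})\|^2-L\|\phi^n-\phi^{n-1}\|^2$ via convexity and Young, which arrives at the same threshold $S\Delta t\ge 3L$ by a bookkeeping that is, if anything, tighter than the paper's expansion of $\|\mathcal{A}(\phi^{n+1})\|^2$ in \eqref{eq_Deltat_S} followed by the coefficient substitution leading to \eqref{eq_above_inequalities_three_0001}.
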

	\begin{proof}
		Setting $\left(\psi,\omega,\boldsymbol{v},q\right)=\left(\mathcal{B}(\mu^{n+1}), \mathcal{A}(\phi^{n+1}),\mathcal{B}(\boldsymbol{u}^{n+1}), \mathcal{B}(p^{n+1})\right)$
		in \eqref{eq_variational_form_Alg2},  we have
		\begin{flalign}
			\label{eq_variational_form_inner_product_phi}
			\frac{1}{\Delta t}\left(\mathcal{A}(\phi^{n+1}),\mathcal{B}(\mu^{n+1})\right)+ \left(\mathcal{B}(\boldsymbol{u}^{n+1}) \cdot \nabla \bar{\phi}^{n+1}, \mathcal{B}(\mu^{n+1})\right) + \left(\nabla \mathcal{B}(\mu^{n+1}), \nabla\mathcal{B}(\mu^{n+1})\right) &= 0,\\
			\label{eq_variational_form_inner_product_mu}
			- \left(\mathcal{B}(\mu^{n+1}), \mathcal{A}(\phi^{n+1})\right) + \left(\nabla \mathcal{B}(\phi^{n+1}), \nabla \mathcal{A}(\phi^{n+1})\right) + S\Delta t\left(\tilde{\phi}^{n+1}-{\phi}^{n},\mathcal{A}(\phi^{n+1})\right)&\notag\\
			+ \left(2f\left(\phi^{n}\right)-f(\phi^{n-1}), \mathcal{A}(\phi^{n+1})\right) &= 0 ,
	\end{flalign}
	and
	\begin{flalign}	
		\label{eq_variational_form_inner_product_ns}
		\frac{1}{\Delta t}\left(\mathcal{A}(\boldsymbol{u}^{n+1}), \mathcal{B}(\boldsymbol{u}^{n+1})\right) + \left(\nabla \mathcal{B}(\boldsymbol{u}^{n+1}), \nabla \mathcal{B}(\boldsymbol{u}^{n+1})\right) + \left((\bar{\boldsymbol{u}}^{n+1} \cdot \nabla) \mathcal{B}(\boldsymbol{u}^{n+1}), \mathcal{B}(\boldsymbol{u}^{n+1})\right)\notag \\
		- \left(\mathcal{B}(p^{n+1}), \nabla\cdot\mathcal{B}(\boldsymbol{u}^{n+1})\right) - \left(\mathcal{B}(\mu^{n+1})\nabla\bar{\phi}^{n+1}, \mathcal{B}(\boldsymbol{u}^{n+1})\right) &= 0 , \\
		\label{eq_variational_form_inner_product_incompressible_condition}
		\left(\nabla \cdot \mathcal{B}(\boldsymbol{u}^{n+1}), \mathcal{B}(p^{n+1})\right) &= 0.
	\end{flalign}
	 Multiplying \eqref{eq_variational_form_inner_product_phi} by $\Delta t$ and adding \eqref{eq_variational_form_inner_product_mu},
	 we obtain
	\begin{equation}
		\label{eq_multiplying_adding_two_forms_phi_mu}
		\begin{aligned}
		 \left(\nabla \mathcal{B}(\phi^{n+1}), \nabla \mathcal{A}(\phi^{n+1})\right) + \Delta t
		 	\|\nabla \mathcal{B}(\mu^{n+1})\|^2+\Delta t \left(\mathcal{B}(\boldsymbol{u}^{n+1}) \cdot \nabla \bar{\phi}^{n+1}, \mathcal{B}(\mu^{n+1})\right)&\\
			 + S\Delta t\left(\tilde{\phi}^{n+1}-{\phi}^{n},\mathcal{A}(\phi^{n+1})\right) + \left(2f(\phi^{n})-f(\phi^{n-1}), \mathcal{A}(\phi^{n+1})\right) &=0.
		\end{aligned}
	\end{equation}
    Adding \eqref{eq_variational_form_inner_product_incompressible_condition} and \eqref{eq_variational_form_inner_product_ns}, multiplying  by $\Delta t$, and using skew-symmetry to get
	\begin{equation}
		\label{eq_multiplying_adding_two_forms_ns_con}
		\begin{aligned}
			\left(\mathcal{A}(\boldsymbol{u}^{n+1}), \mathcal{B}(\boldsymbol{u}^{n+1})\right)+
			\Delta t\|\nabla \mathcal{B}(\boldsymbol{u}^{n+1})\|^2-\Delta t\left(\mathcal{B}(\mu^{n+1})\nabla\bar{\phi}^{n+1}, \mathcal{B}(\boldsymbol{u}^{n+1})\right) =0.
		\end{aligned}
	\end{equation}
	According to \eqref{eq_A_B_form}, combining the \eqref{eq_multiplying_adding_two_forms_phi_mu} with \eqref{eq_multiplying_adding_two_forms_ns_con},
	 we have
	\begin{equation}
		\label{eq_combining_the_two_inequalities}
		\begin{aligned}
			&\Delta t	\|\nabla \mathcal{B}(\mu^{n+1})\|^2+
			\frac{1}{4}\left(\|\nabla\phi^{n+1}\|^2+\|\nabla(2\phi^{n+1}-\phi^n)\|^2+\|\nabla(\phi^{n+1}-\phi^n)\|^2\right)\\
			&\quad-\frac{1}{4}\left(\|\nabla\phi^{n}\|^2+\|\nabla(2\phi^{n}-\phi^{n-1})\|^2+\|\nabla(\phi^{n}-\phi^{n-1})\|^2\right)+
			\frac{3}{4}\|\nabla(\phi^{n+1}-2\phi^n+\phi^{n-1})\|^2\\
			&\quad+\frac{1}{4}\left(\|\boldsymbol{u}^{n+1}\|^2+\|2\boldsymbol{u}^{n+1}-\boldsymbol{u}^n\|^2+\|\boldsymbol{u}^{n+1}-\boldsymbol{u}^n\|^2\right) + \Delta t\|\nabla \mathcal{B}(\boldsymbol{u}^{n+1})\|^2\\
			&\quad-\frac{1}{4}\left(\|\boldsymbol{u}^{n}\|^2+\|2\boldsymbol{u}^{n}-\boldsymbol{u}^{n-1}\|^2+\|\boldsymbol{u}^{n}-\boldsymbol{u}^{n-1}\|^2\right)+\frac{3}{4}\left(\|\boldsymbol{u}^{n+1}-2\boldsymbol{u}^n+\boldsymbol{u}^{n-1}\|^2\right)\\
			&\quad+ S\Delta t\left(\tilde{\phi}^{n+1}-\phi^{n},\mathcal{A}(\phi^{n+1})\right) + \left(2f(\phi^{n})-f(\phi^{n-1}), \mathcal{A}(\phi^{n+1})\right) =0.
		\end{aligned}
	\end{equation}
	For the last two term in \eqref{eq_combining_the_two_inequalities}, we firstly have
	\begin{equation}
		\label{eq_Deltat_S}
		\begin{aligned}
			&S\Delta t \left(\tilde{\phi}^{n+1}-\phi^{n},\mathcal{A}(\phi^{n+1})\right)=S\Delta t\|\mathcal{A}(\phi^{n+1})\|^2\\
			=&~\frac{S\Delta t}{4}\|2\left({\phi}^{n+1}-\phi^{n}\right)+\left((\phi^{n+1}-\phi^n)-(\phi^n-\phi^{n-1})\right)\|^2\\
			=&~S\Delta t\|\phi^{n+1}-\phi^n\|^2+\frac{S\Delta t}{4}\|\phi^{n+1}-2\phi^n+\phi^{n-1}\|^2\\
			&~+S\Delta t\left(\phi^{n+1}-\phi^n,(\phi^{n+1}-\phi^n)-(\phi^n-\phi^{n-1})\right)\\
			=&~S\Delta t\|\phi^{n+1}-\phi^n\|^2+\frac{3S \Delta t}{4}\|\phi^{n+1}-2\phi^n+\phi^{n-1}\|^2\\
			&~+\frac{S\Delta t}{2}\left(\|\phi^{n+1}-\phi^n\|^2-\|\phi^n-\phi^{n-1}\|^2\right).
		\end{aligned}
	\end{equation}
	And then, we use the Taylor expansion to obtain
	\begin{equation}\label{eq_Taylor_expansion}
	F({\phi}^{n+1})-F(\phi^n)=f(\phi^n)\left({\phi}^{n+1}-\phi^n\right)+\frac{f'(\xi)}{2}\left({\phi}^{n+1}-\phi^n\right)^2,
	\end{equation}
	for some $\xi\in (\phi^n, \phi^{n+1})$, then we get
	\begin{equation}
		\label{eq_combining_the_two_inequalities_the_last_two_term_001}
		\begin{aligned}
			\left(f(\phi^{n}), \mathcal{A}(\phi^{n+1})\right) = & \left(f(\phi^{n}), \tilde{\phi}^{n+1}-\phi^n\right) =\frac{1}{2} \left(f(\phi^n),3\phi^{n+1}-4\phi^n+\phi^{n-1}\right)\\
			=&\frac{3}{2}\left(f(\phi^n),\phi^{n+1}-\phi^n\right)-\frac{1}{2}\left(f(\phi^n),\phi^n-\phi^{n-1}\right)\\
			=&\frac{3}{2}\left(F(\phi^{n+1})-F(\phi^n),1\right)-\frac{3}{4}\left(f'(\xi_1)\left(\phi^{n+1}-\phi^n\right),\phi^{n+1}-\phi^n\right)\\
			&-\frac{1}{2}\left(F(\phi^n)-F(\phi^{n-1}),1\right)+\frac{1}{4}\left(f'(\xi_2)\left(\phi^n-\phi^{n-1}\right),\phi^n-\phi^{n-1}\right),\\
		\end{aligned}
	\end{equation}
	for some $\xi_1,\xi_2$. On the other hand,
	\begin{equation}
		\label{eq_combining_the_two_inequalities_the_last_term_001}
		\begin{aligned}
			& \left(f(\phi^{n})-f(\phi^{n-1}), \mathcal{A}(\phi^{n+1})\right) \\
			=&~ \frac{1}{2} \left(f(\phi^{n})-f(\phi^{n-1}), 3\phi^{n+1}-4\phi^n+\phi^{n-1}\right) \\
			=&~ \frac{3}{2} \left(f(\phi^{n})-f(\phi^{n-1}),\phi^{n+1}-\phi^n\right) - \frac{1}{2} \left(f(\phi^{n})-f(\phi^{n-1}),\phi^{n}-\phi^{n-1}\right)\\
			=&~\frac{3}{2}\left(f'(\xi_3)\left(\phi^n-\phi^{n-1}\right),{\phi}^{n+1}-\phi^n\right)-\frac{1}{2}\left(f'(\xi_3)\left(\phi^n-\phi^{n-1}\right),{\phi}^{n}-\phi^{n-1}\right),
		\end{aligned}
	\end{equation}
	for some $\xi_3$. Combining the \eqref{eq_combining_the_two_inequalities_the_last_two_term_001}-\eqref{eq_combining_the_two_inequalities_the_last_term_001}, 	
	we arrive at
	\begin{equation}\label{equality_potential}
		\begin{aligned}
			&\left(2f(\phi^{n})-f(\phi^{n-1}), \mathcal{A}(\phi^{n+1})\right) \\
			=&~ \left(F(\phi^{n+1})-F(\phi^n),1\right)+\frac{1}{2}\left(F(\phi^{n+1})-2F(\phi^n)+F(\phi^{n-1}),1\right)\\
			&-\frac{3}{4}\left(f'(\xi_1)\left(\phi^{n+1}-\phi^n\right),\phi^{n+1}-\phi^n\right)+\frac{3}{2}\left(f'(\xi_3)\left(\phi^n-\phi^{n-1}\right),{\phi}^{n+1}-\phi^n\right)  \\
			& + \frac{1}{4}\left(f'(\xi_2)\left(\phi^n-\phi^{n-1}\right),\phi^n-\phi^{n-1}\right) - \frac{1}{2}\left(f'(\xi_3)\left(\phi^n-\phi^{n-1}\right),\phi^n-\phi^{n-1}\right) .
		\end{aligned}
	\end{equation}
    Inserting \eqref{eq_Deltat_S}, \eqref{equality_potential} into \eqref{eq_combining_the_two_inequalities}, we obtain
    \begin{equation}
    	\label{eq_above_inequalities_three_00010090293}
    	\begin{aligned}
    		\Delta t & \|\nabla \mathcal{B}(\mu^{n+1})\|^2+
    		\frac{1}{4}\left(\|\nabla\phi^{n+1}\|^2+\|\nabla(2\phi^{n+1}-\phi^n)\|^2+\|\nabla(\phi^{n+1}-\phi^n)\|^2\right)\\
    		&-\frac{1}{4}\left(\|\nabla\phi^{n}\|^2+\|\nabla(2\phi^{n}-\phi^{n-1})\|^2+\|\nabla(\phi^{n}-\phi^{n-1})\|^2\right)+
    		\frac{3}{4}\|\nabla(\phi^{n+1}-2\phi^n+\phi^{n-1})\|^2\\
    		&+\frac{1}{4}\left(\|\boldsymbol{u}^{n+1}\|^2+\|2\boldsymbol{u}^{n+1}-\boldsymbol{u}^n\|^2+\|\boldsymbol{u}^{n+1}-\boldsymbol{u}^n\|^2\right)+ \Delta t\|\nabla \mathcal{B}(\boldsymbol{u}^{n+1})\|^2\\
    		&-\frac{1}{4}\left(\|\boldsymbol{u}^{n}\|^2+\|2\boldsymbol{u}^{n}-\boldsymbol{u}^{n-1}\|^2+\|\boldsymbol{u}^{n}-\boldsymbol{u}^{n-1}\|^2\right)+\frac{3}{4}\|\boldsymbol{u}^{n+1}-2\boldsymbol{u}^n+\boldsymbol{u}^{n-1}\|^2\\
    		& + \left(F(\phi^{n+1})-F(\phi^n),1\right)+\frac{1}{2}\left(F(\phi^{n+1})-2F(\phi^n)+F(\phi^{n-1}),1\right)\\
    		&+S\Delta t\|\phi^{n+1}-\phi^n\|^2 + \frac{3S\Delta t}{4}\|\phi^{n+1} - 2\phi^n + \phi^{n-1}\|^2\\
    		&+\frac{S\Delta t}{2}\left(\|\phi^{n+1}-\phi^n\|^2-\|\phi^n-\phi^{n-1}\|^2\right)\\
    		&-\frac{3}{4}\left(f'(\xi_1)\left(\phi^{n+1}-\phi^n\right),\phi^{n+1}-\phi^n\right)+\frac{3}{2}\left(f'(\xi_3)\left(\phi^n-\phi^{n-1}\right),{\phi}^{n+1}-\phi^n\right)  \\
    		& + \frac{1}{4}\left(f'(\xi_2)\left(\phi^n-\phi^{n-1}\right),\phi^n-\phi^{n-1}\right) - \frac{1}{2}\left(f'(\xi_3)\left(\phi^n-\phi^{n-1}\right),\phi^n-\phi^{n-1}\right) = 0.
    	\end{aligned}
    \end{equation}
	Using the \eqref{eq_phi_condition}, we obtain
	\begin{equation}
		\label{eq_above_inequalities_three_000100001}
		\begin{aligned}
			\Delta t & \|\nabla \mathcal{B}(\mu^{n+1})\|^2+
			\frac{1}{4}\left(\|\nabla\phi^{n+1}\|^2+\|\nabla(2\phi^{n+1}-\phi^n)\|^2+\|\nabla(\phi^{n+1}-\phi^n)\|^2\right)\\
			&-\frac{1}{4}\left(\|\nabla\phi^{n}\|^2+\|\nabla(2\phi^{n}-\phi^{n-1})\|^2+\|\nabla(\phi^{n}-\phi^{n-1})\|^2\right)+
			\frac{3}{4}\|\nabla(\phi^{n+1}-2\phi^n+\phi^{n-1})\|^2\\
			&+\frac{1}{4}\left(\|\boldsymbol{u}^{n+1}\|^2+\|2\boldsymbol{u}^{n+1}-\boldsymbol{u}^n\|^2+\|\boldsymbol{u}^{n+1}-\boldsymbol{u}^n\|^2\right)+ \Delta t\|\nabla \mathcal{B}(\boldsymbol{u}^{n+1})\|^2\\
			&-\frac{1}{4}\left(\|\boldsymbol{u}^{n}\|^2+\|2\boldsymbol{u}^{n}-\boldsymbol{u}^{n-1}\|^2+\|\boldsymbol{u}^{n}-\boldsymbol{u}^{n-1}\|^2\right)+\frac{3}{4}\|\boldsymbol{u}^{n+1}-2\boldsymbol{u}^n+\boldsymbol{u}^{n-1}\|^2\\
			&+ \left(F(\phi^{n+1})-F(\phi^n),1\right)+\frac{1}{2}\left(F(\phi^{n+1})-2F(\phi^n)+F(\phi^{n-1}),1\right)\\
			&+S\Delta t\|\phi^{n+1}-\phi^n\|^2 + \frac{3S\Delta t}{4}\|\phi^{n+1} - 2\phi^n + \phi^{n-1}\|^2\\
			&+\frac{S\Delta t}{2}\left(\|\phi^{n+1}-\phi^n\|^2-\|\phi^n-\phi^{n-1}\|^2\right)\\
			\leq & ~\frac{3L}{4} \|\phi^{n+1}-\phi^n\|^2 + \frac{3L}{2}\left({\phi}^{n+1}-\phi^n, \phi^n-\phi^{n-1}\right) + \frac{3L}{4} \|\phi^{n}-\phi^{n-1}\|^2\\
			\leq&~ \frac{3L}{2}\|\phi^{n+1}-\phi^n\|^2+\frac{3L}{2}\|\phi^n-\phi^{n-1}\|^2.
		\end{aligned}
	\end{equation}
	That is, 
	\begin{equation}
		\label{eq_above_inequalities_three_000190091}
		\begin{aligned}
			\Delta t & \|\nabla \mathcal{B}(\mu^{n+1})\|^2+
			\frac{1}{4}\left(\|\nabla\phi^{n+1}\|^2+\|\nabla(2\phi^{n+1}-\phi^n)\|^2+\|\nabla(\phi^{n+1}-\phi^n)\|^2\right)\\
			&-\frac{1}{4}\left(\|\nabla\phi^{n}\|^2+\|\nabla(2\phi^{n}-\phi^{n-1})\|^2+\|\nabla(\phi^{n}-\phi^{n-1})\|^2\right)+
			\frac{3}{4}\|\nabla(\phi^{n+1}-2\phi^n+\phi^{n-1})\|^2\\
			&+\frac{1}{4}\left(\|\boldsymbol{u}^{n+1}\|^2+\|2\boldsymbol{u}^{n+1}-\boldsymbol{u}^n\|^2+\|\boldsymbol{u}^{n+1}-\boldsymbol{u}^n\|^2\right)+ \Delta t\|\nabla \mathcal{B}(\boldsymbol{u}^{n+1})\|^2\\
			&-\frac{1}{4}\left(\|\boldsymbol{u}^{n}\|^2+\|2\boldsymbol{u}^{n}-\boldsymbol{u}^{n-1}\|^2+\|\boldsymbol{u}^{n}-\boldsymbol{u}^{n-1}\|^2\right)+\frac{3}{4}\|\boldsymbol{u}^{n+1}-2\boldsymbol{u}^n+\boldsymbol{u}^{n-1}\|^2\\
			&+ \left(F(\phi^{n+1})-F(\phi^n),1\right)+\frac{1}{2}\left(F(\phi^{n+1})-2F(\phi^n)+F(\phi^{n-1}),1\right)\\
			&+\left(S\Delta t-\frac{3L}{2}\right)\|\phi^{n+1}-\phi^n\|^2-\frac{3L}{2}\|\phi^n-\phi^{n-1}\|^2+\frac{3S\Delta t}{4}\|\phi^{n+1}-2\phi^n+\phi^{n-1}\|^2\\
			&+\frac{S\Delta t}{2}\left(\|\phi^{n+1}-\phi^n\|^2-\|\phi^n-\phi^{n-1}\|^2\right)
			\leq  0.
		\end{aligned}
	\end{equation}
	Then, if assuming 
	\begin{equation}
		S\Delta t-\frac{3L}{2}\geq \frac{3L}{2},
	\end{equation}
	that is, 
	\begin{equation}
		S\Delta t\geq 3L
		\Leftrightarrow S\geq \frac{3L}{\Delta t}.
	\end{equation}
	 We obtain the following inequality
	\begin{equation}
		\label{eq_above_inequalities_three_0001}
		\begin{aligned}
			\Delta t & \|\nabla \mathcal{B}(\mu^{n+1})\|^2+
			\frac{1}{4}\left(\|\nabla\phi^{n+1}\|^2+\|\nabla(2\phi^{n+1}-\phi^n)\|^2+\|\nabla(\phi^{n+1}-\phi^n)\|^2\right)\\
			&-\frac{1}{4}\left(\|\nabla\phi^{n}\|^2+\|\nabla(2\phi^{n}-\phi^{n-1})\|^2+\|\nabla(\phi^{n}-\phi^{n-1})\|^2\right)+
			\frac{3}{4}\|\nabla(\phi^{n+1}-2\phi^n+\phi^{n-1})\|^2\\
			&+\frac{1}{4}\left(\|\boldsymbol{u}^{n+1}\|^2+\|2\boldsymbol{u}^{n+1}-\boldsymbol{u}^n\|^2+\|\boldsymbol{u}^{n+1}-\boldsymbol{u}^n\|^2\right)+ \Delta t\|\nabla \mathcal{B}(\boldsymbol{u}^{n+1})\|^2\\
			&-\frac{1}{4}\left(\|\boldsymbol{u}^{n}\|^2+\|2\boldsymbol{u}^{n}-\boldsymbol{u}^{n-1}\|^2+\|\boldsymbol{u}^{n}-\boldsymbol{u}^{n-1}\|^2\right)+\frac{3}{4}\|\boldsymbol{u}^{n+1}-2\boldsymbol{u}^n+\boldsymbol{u}^{n-1}\|^2\\
			&+ \left(F(\phi^{n+1})-F(\phi^n),1\right)+\frac{1}{2}\left(F(\phi^{n+1})-2F(\phi^n)+F(\phi^{n-1}),1\right)\\
			& + 3L\left(\|\phi^{n+1}-\phi^n\|^2-\|\phi^n-\phi^{n-1}\|^2\right) +\frac{3S\Delta t}{4}\|\phi^{n+1}-2\phi^n+\phi^{n-1}\|^2\\
			\leq & 0.
		\end{aligned}
	\end{equation}
    Inserting the definition of energy $E^{n+1}$, we rewrite the above inequality as
	\begin{equation}
		\label{eq_combining_finally_result}
		\begin{aligned}
			&E^{n+1}-E^n + \Delta t \|\nabla \mathcal{B}(\mu^{n+1})\|^2+ \Delta t \|\nabla \mathcal{B}(\boldsymbol{u}^{n+1})\|^2
			+\frac{3}{4}\|\nabla(\phi^{n+1}-2\phi^n+\phi^{n-1})\|^2\\
			&+\frac{3}{4}\|\boldsymbol{u}^{n+1}-2\boldsymbol{u}^n+\boldsymbol{u}^{n-1}\|^2
			 + \frac{3S\Delta t}{4} \|\phi^{n+1}-2\phi^n+\phi^{n-1}\|^2
			\leq 0,
		\end{aligned}
	\end{equation} 
	where $E^{n+1}$ is defined by \eqref{eq_discrete_energy_definition}. Dropping unneeded terms on the left-hand side, then, we obtain
	\begin{equation}
		\begin{aligned}
			E^{n+1}-E^{n} \leq 0.
		\end{aligned}
	\end{equation}
	Finally, we conclude that the discrete energy is unconditionally stable.
	\end{proof}
    Thus, summing up from $n=1$ to $m$ in \eqref{eq_combining_finally_result} and using the above assumptions, we can obtain the following lemma. 
	\begin{Lemma}\label{lemma_boundness_expression}
		Assuming the initial energy is stable, i.e., $E^0 < C$ for some constant $C$. Then, for all $1<m\leq N-1$, and $S \geq {3L}/{\Delta t}$, and given the initial value of the numerical solutions $\phi^0,$ $\phi^1$, $\mu^0$, $\mu^1$, $\boldsymbol{u}^0$, $\boldsymbol{u}^1$. The solutions $\left(\phi^{n+1},\mu^{n+1},\boldsymbol{u}^{n+1},p^{n+1}\right)$ of scheme \eqref{eq_variational_form_Alg2} satisfy the following bounds
		\begin{flalign}
			\|\nabla\phi^{m+1}\|^2+\|\nabla\left(2\phi^{m+1}-\phi^m\right)\|^2+\|\nabla\left(\phi^{m+1}-\phi^m\right)\|^2\leq&~ C,\\
			\|\boldsymbol{u}^{m+1}\|^2+\|2\boldsymbol{u}^{m+1}-\boldsymbol{u}^m\|^2+\|\boldsymbol{u}^{m+1}-\boldsymbol{u}^m\|^2\leq &~C,\\
			\left(F(\phi^{m+1}),1\right)+\frac{1}{2}\left(F(\phi^{m+1})-F(\phi^m),1\right)\leq &~C,\\
			S\Delta t\sum_{n=1}^{m}\|\phi^{n+1}-2\phi^n+\phi^{n-1}\|^2\leq &~C,\\
			\sum_{n=1}^{m}\left(\|\nabla\left(\phi^{n+1}-2\phi^n+\phi^{n-1}\right)\|^2
			+\|\boldsymbol{u}^{n+1}-2\boldsymbol{u}^n+\boldsymbol{u}^{n-1}\|^2\right)\leq &~C,\\
			\Delta t\sum_{n=1}^{m}\left(\|\nabla \mathcal{B}(\mu^{n+1})\|^2+ \|\nabla \mathcal{B}(\boldsymbol{u}^{n+1})\|^2\right)\leq &~C,
		\end{flalign}
		where $C$ is the positive constant which depend on the initial values.
	\end{Lemma}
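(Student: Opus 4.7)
The plan is to obtain all six bounds in the lemma by summing the dissipation inequality \eqref{eq_combining_finally_result} from $n=1$ to $m$ and then reading off individual estimates from the resulting telescoped identity. Since $\sum_{n=1}^{m}(E^{n+1}-E^n)$ collapses to $E^{m+1}-E^{1}$, summation yields
\[
E^{m+1}+\Delta t\sum_{n=1}^{m}\!\left(\|\nabla\mathcal{B}(\mu^{n+1})\|^2+\|\nabla\mathcal{B}(\boldsymbol{u}^{n+1})\|^2\right)+\tfrac{3}{4}\sum_{n=1}^{m}\!\left(\|\nabla(\phi^{n+1}-2\phi^n+\phi^{n-1})\|^2+\|\boldsymbol{u}^{n+1}-2\boldsymbol{u}^n+\boldsymbol{u}^{n-1}\|^2\right)+\tfrac{3S\Delta t}{4}\sum_{n=1}^{m}\|\phi^{n+1}-2\phi^n+\phi^{n-1}\|^2\leq E^{1}.
\]
By Theorem \ref{theorem_unconditional_stability} combined with the hypothesis $E^{0}<C$, the right-hand side is bounded by a constant depending only on the initial data.

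Next I would split $E^{m+1}$ into its manifestly non-negative pieces (the three squared-gradient terms for $\phi$, the three quadratic terms for $\boldsymbol{u}$, and $\tfrac{3L}{\epsilon}\|\phi^{m+1}-\phi^m\|^2$) and the potential combination $(F(\phi^{m+1}),1)+\tfrac{1}{2}(F(\phi^{m+1})-F(\phi^m),1)$. Bound (3) will fall out by dropping the non-negative pieces of $E^{m+1}$ and retaining only the potential combination. Bounds (1) and (2) will follow once $E^{m+1}$ is bounded above \emph{and} the potential combination is bounded below, so that no individual non-negative piece is allowed to absorb an unbounded deficit. Bounds (4)--(6) are then read off from the dissipation sum, provided again that $E^{m+1}$ is bounded below.

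The main obstacle is therefore a uniform lower bound on $E^{m+1}$, equivalently a uniform upper bound on $(F(\phi^n),1)$. Using $F(\phi)=\tfrac{1}{4}(\phi^2-1)^2\geq 0$, I would rewrite $E^n\leq C$ as $\tfrac{3}{2}(F(\phi^n),1)\leq C+\tfrac{1}{2}(F(\phi^{n-1}),1)$ after discarding the other non-negative parts, which gives the geometric recursion $(F(\phi^n),1)\leq \tfrac{2C}{3}+\tfrac{1}{3}(F(\phi^{n-1}),1)$. Iterating this down to $n=0$, where the initial-data assumption supplies $(F(\phi^0),1)\leq C$, produces $(F(\phi^n),1)\leq C$ uniformly in $n$ and hence $E^{m+1}\geq -\tfrac{1}{2}C$. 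Inserting this lower bound back into the telescoped inequality closes the remaining estimates, with all constants depending only on the initial values.
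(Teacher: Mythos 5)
Your proposal is correct and follows essentially the same route as the paper, which proves the lemma in one line by telescoping the dissipation inequality \eqref{eq_combining_finally_result} from $n=1$ to $m$ and invoking Theorem \ref{theorem_unconditional_stability} together with the bound on the initial energy. The one place you go beyond the paper is the geometric recursion $(F(\phi^{n}),1)\leq \tfrac{2C}{3}+\tfrac13(F(\phi^{n-1}),1)$ giving a uniform upper bound on $(F(\phi^{n}),1)$ and hence a uniform lower bound on $E^{m+1}$; this detail is genuinely needed to extract bounds (1), (2) and (4)--(6) from the telescoped inequality (since the combination $\tfrac32(F(\phi^{m+1}),1)-\tfrac12(F(\phi^{m}),1)$ is not a priori non-negative), and the paper simply glosses over it.
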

	\section{Error Estimates}\label{section_numerical_analysis}
	In the section, we analyze the error estimates for time semi-discrete scheme \eqref{eq_variational_form_Alg2}. 
	Let $\phi(t^{n+1})$, $\mu(t^{n+1})$, $\boldsymbol{u}(t^{n+1})$ and $p(t^{n+1})$
	be the exact solution of the CHNS equation at $t^{n+1}$.
	We denote the errors as follows:
	\begin{equation*}
		\label{eq_error_definitions}
		\begin{aligned}
			e^{n+1}_{\phi} &:= \phi(t^{n+1}) - \phi^{n+1}, \quad e^{n+1}_{\mu} := \mu(t^{n+1}) - \mu^{n+1}, \\
			\boldsymbol{e}^{n+1}_{\boldsymbol{u}} &:= \boldsymbol{u}(t^{n+1}) - \boldsymbol{u}^{n+1}, \quad e^{n+1}_{p} := p(t^{n+1}) - p^{n+1}.
		\end{aligned}
	\end{equation*}
	 For the optimal error estimation, we assume that the exact solution of \eqref{eq_variational_form_Alg2} satisfies the following regularity assumptions:
	 \begin{equation}
	 	\label{eq_regularity_001}
	 		\begin{aligned}
	 		&\phi\in L^{\infty}\left(0,T;H^{2}(\Omega)\cap W^{1,\infty}(\Omega)\right),  \phi_{tt}\in L^{\infty}\left(0,T;L^2(\Omega)\right) \cap L^{2}\left(0,T;H^2(\Omega)\right),  \\
	 		& \phi_{ttt}\in L^2\left(0,T;L^2(\Omega)\right),\\
	 		&\mu\in L^{\infty}\left(0,T;H^{1}(\Omega)\right), \mu_{tt}\in L^{2}\left(0,T;H^{2}(\Omega)\right),  \\
	 		&\boldsymbol{u}\in L^{\infty}\left(0,T;\boldsymbol{H}^{2}(\Omega)\right), ~ \boldsymbol{u}_{tt}\in L^{2}\left(0,T;\boldsymbol{H}^{2}(\Omega)\right) \cap L^{\infty}\left(0,T;\boldsymbol{H}^{1}(\Omega)\right),  \\
	 		& \boldsymbol{u}_{ttt}\in L^{2}\left(0,T;\boldsymbol{L}^{2}(\Omega)\right),  \\
	 		& p_{tt}\in L^{2}\left(0,T;L^{2}(\Omega)\right), ~ p_t\in L^2\left(0,T;H^1(\Omega)\right).
	 	\end{aligned}
	 \end{equation}
	The continuous variational form at moment $t^{n+1}$ is shown below: for $\forall \left(\psi, \omega, \boldsymbol{v}, q\right) \in X \times X \times \boldsymbol{Y} \times Q$, 
	\begin{equation}
		\label{eq_continuous_variational_forms}
		\begin{aligned}
			\left(\phi_t(t^{n+1}), \psi\right) + \left(\boldsymbol{u}(t^{n+1}) \cdot \nabla \phi(t^{n+1}), \psi\right) + \left(\nabla \mu(t^{n+1}), \nabla\psi\right) & =   0, \\
			- \left(\mu(t^{n+1}), \omega\right) + \left(\nabla \phi(t^{n+1}), \nabla \omega\right)  + \left(f(\phi(t^{n+1})), \omega\right)  & =   0, \\
			\left(\partial_t \boldsymbol{u}(t^{n+1}), \boldsymbol{v}\right) + \left(\nabla \boldsymbol{u}(t^{n+1}), \nabla \boldsymbol{v}\right) + \left(\boldsymbol{u}(t^{n+1}) \cdot \nabla \boldsymbol{u}(t^{n+1}), \boldsymbol{v}\right) \\
			- \left(p(t^{n+1}), \nabla\cdot \boldsymbol{v}\right) - \left(\mu(t^{n+1}) \nabla \phi(t^{n+1}), \boldsymbol{v}\right)  & =   0, \\
			\left(\nabla \cdot \boldsymbol{u}(t^{n+1}), q\right) & =  0. \\
		\end{aligned}
	\end{equation}
	The equivalent form of \eqref{eq_continuous_variational_forms} is as follows:
	\begin{equation}\label{eq_equivalent_continuous_variational_forms}
		\left\{
		\begin{aligned}
			\frac{1}{\Delta t}\left(\mathcal{A}\left(\phi(t^{n+1})\right), \psi\right) + \left(\mathcal{B}\left(\boldsymbol{u}(t^{n+1})\right) \cdot \nabla \mathcal{B}\left(\phi(t^{n+1})\right), \psi\right) +  \left(\nabla \mathcal{B}(\mu\left(t^{n+1}\right)), \nabla\psi\right) & =  \left(R_1,\psi\right),  \\
			- \left(\mathcal{B}\left(\mu(t^{n+1})\right), \omega\right) + \left(\nabla \mathcal{B}\left(\phi(t^{n+1})\right), \nabla \omega\right)  + \left(2f(\phi(t^{n}))-f(\phi(t^{n-1})), \omega\right)  & = \left(R_2,\omega\right), \\
			\frac{1}{\Delta t}\left(\mathcal{A}\left(\boldsymbol{u}(t^{n+1})\right), \boldsymbol{v}\right) + \left(\nabla \mathcal{B}\left(\boldsymbol{u}(t^{n+1})\right), \nabla \boldsymbol{v}\right) + \left(\mathcal{B}\left(\boldsymbol{u}(t^{n+1})\right) \cdot \nabla \mathcal{B}\left(\boldsymbol{u}(t^{n+1})\right), \boldsymbol{v}\right) \\
			- \left(\mathcal{B}\left(p(t^{n+1})\right), \nabla\cdot \boldsymbol{v}\right) - \left(\mathcal{B}\left(\mu(t^{n+1})\right) \nabla \mathcal{B}\left(\phi(t^{n+1})\right), \boldsymbol{v}\right)  & =  \left(R_3, \boldsymbol{v}\right),\\
			\left(\nabla \cdot \mathcal{B}\left(\boldsymbol{u}(t^{n+1})\right), q\right) & = \left(R_4,q\right), \\
		\end{aligned}\right.
	\end{equation}
	where $R_1$, $R_2$, $R_3$ and $R_4$ is defined by 
	\begin{equation}\label{eq_truncation_error}
		\left\{
		\begin{aligned}
			R_1 &:= \frac{1}{\Delta t}\mathcal{A}\left(\phi(t^{n+1})\right)- \phi_t(t^{n+1})- \Delta \left(\mathcal{B}\left(\mu\left(t^{n+1}\right)\right) - \mu(t^{n+1})\right)\\
			& \quad + \mathcal{B}\left(\boldsymbol{u}(t^{n+1})\right) \cdot \nabla \mathcal{B}\left(\phi(t^{n+1})\right) - \boldsymbol{u}(t^{n+1}) \cdot \nabla \phi(t^{n+1}),  \\
			R_2 &:= - \mathcal{B}\left(\mu(t^{n+1})\right) + \mu(t^{n+1}) -\Delta \left(\mathcal{B}\left(\phi(t^{n+1})\right) - \phi(t^{n+1})\right)\\
			& \quad + 2f(\phi(t^{n}))-f(\phi(t^{n-1}))- f(\phi(t^{n+1})),\\
			R_3 &:= \frac{1}{\Delta t}\mathcal{A}\left(\boldsymbol{u}(t^{n+1})\right) - \boldsymbol{u}_t(t^{n+1})-\Delta\left( \mathcal{B}\left(\boldsymbol{u}(t^{n+1})\right) - \boldsymbol{u}(t^{n+1})\right) \\
			& \quad + \mathcal{B}\left(\boldsymbol{u}(t^{n+1})\right) \cdot \nabla \mathcal{B}\left(\boldsymbol{u}(t^{n+1})\right) - \boldsymbol{u}(t^{n+1}) \cdot \nabla \boldsymbol{u}(t^{n+1})
			 +\nabla \left(\mathcal{B}\left(p(t^{n+1})\right) -  p(t^{n+1})\right)\\
			& \quad -\mathcal{B}\left(\mu(t^{n+1})\right) \nabla \mathcal{B}\left(\phi(t^{n+1})\right) + \mu(t^{n+1}) \nabla \phi(t^{n+1}), \\
			R_4 &:= \nabla \cdot \left(\mathcal{B}\left(\boldsymbol{u}(t^{n+1})\right) - \boldsymbol{u}(t^{n+1})\right).
		\end{aligned}\right.
	\end{equation}
	Then, by subtracting the continuous equation \eqref{eq_equivalent_continuous_variational_forms} from the discrete equation \eqref{eq_variational_form_Alg2} after setting all physical parameters to one, we obtain the following error equation, for $\forall \left(\psi, \omega, \boldsymbol{v}, q\right) \in X \times X \times \boldsymbol{Y} \times Q$,
	\begin{subequations}
		\begin{align}
		\label{eq_error_phi}
		\frac{1}{\Delta t}\left(\mathcal{A}(e^{n+1}_{\phi}), \psi\right) + \left([\mathcal{B}\left(\boldsymbol{u}(t^{n+1})\right) \cdot \nabla \mathcal{B}\left(\phi(t^{n+1})\right) - \mathcal{B}(\boldsymbol{u}^{n+1}) \cdot \nabla \bar{\phi}^{n+1}], \psi\right) \notag\\
		+  \left(\nabla \mathcal{B}(e^{n+1}_{\mu}), \nabla\psi\right) & =  \left(R_1,\psi\right),  \\
		\label{eq_error_mu}
		- \left(\mathcal{B}\left(e^{n+1}_{\mu}\right), \omega\right) + \left(\nabla \mathcal{B}\left(e^{n+1}_{\phi}\right), \nabla \omega\right) - S\Delta t\left(\tilde{\phi}^{n+1}-{\phi}^{n},\omega\right)\notag\\
		+ \left([2f(\phi(t^{n})-f(\phi(t^{n-1}))- 2f(\phi^{n})+f(\phi^{n-1})], \omega\right)  & =\left( R_2, \omega\right),\\
		\label{eq_error_ns}
		\frac{1}{\Delta t}\left(\mathcal{A}\left(e^{n+1}_{\boldsymbol{u}}\right), \boldsymbol{v}\right) + \left(\mathcal{B}\left(\boldsymbol{u}(t^{n+1})\right) \cdot \nabla \mathcal{B}\left(\boldsymbol{u}(t^{n+1})\right) - \bar{\boldsymbol{u}}^{n+1} \cdot \nabla \mathcal{B}\left(\boldsymbol{u}^{n+1}\right), \boldsymbol{v}\right) \notag\\
		+ \left(\nabla \mathcal{B}\left(e^{n+1}_{\boldsymbol{u}}\right), \nabla \boldsymbol{v}\right) - \left(\mathcal{B}\left(\mu(t^{n+1})\right) \nabla \mathcal{B}\left(\phi(t^{n+1})\right) - \mathcal{B}\left(\mu^{n+1}\right) \nabla \bar{\phi}^{n+1}, \boldsymbol{v}\right)  \notag\\
		- \left(\mathcal{B}\left(e^{n+1}_{p}\right), \nabla\cdot \boldsymbol{v}\right) & =  \left(R_3,\boldsymbol{v}\right), \\
		\label{eq_error_incompressible}
		\left(\nabla \cdot \mathcal{B}\left(e^{n+1}_{\boldsymbol{u}}\right), q\right) & = \left(R_4,q\right).
	\end{align}
	\end{subequations}
	We firstly need the following lemmas which will be used in the later analysis.
	\begin{Lemma}[\cite{2020_Aytekin_Cibik_Analysis_of_second_order_time_filtered_backward_Euler_method_for_MHD_equations}]
		\label{lemma_consistency_error}
		Let $w, w_t, w_{t t}, w_{t t t} \in L^2\left(0, T; L^2(\Omega)\right)$, then there exists $C>0$ such that
		\begin{equation*}
			\begin{aligned}
				\Delta t \sum_{n=0}^{N-1}\left\|\mathcal{B}\left(w(t^{n+1})\right) - w(t^{n+1})\right\|^2 &\leq C \left(\Delta t\right)^4\left\|w_{t t}\right\|_{L^2\left(0, T ; L^2(\Omega)\right)}^2, \\
				\Delta t \sum_{n=0}^{N-1}\left\|\frac{1}{\Delta t}\mathcal{A}\left(w(t^{n+1})\right) - w_t(t^{n+1})\right\|^2 &\leq C \left(\Delta t\right)^4\left\|w_{t t t}\right\|_{L^2\left(0, T ; L^2(\Omega)\right)}^2.
			\end{aligned}
		\end{equation*}
	\end{Lemma}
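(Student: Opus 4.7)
The plan is to expand both differences via Taylor's theorem with integral remainder, bound the resulting remainders in the $L^2(\Omega)$-norm by Cauchy--Schwarz applied in the time variable, and then sum in $n$ so that the local temporal integrals collapse into the global $L^2(0,T;L^2(\Omega))$ norm on the right-hand side. The starting algebraic observations are
\begin{equation*}
\mathcal{B}(w(t^{n+1})) - w(t^{n+1}) = \tfrac{1}{2}\bigl(w(t^{n+1}) - 2w(t^n) + w(t^{n-1})\bigr),
\qquad
\tfrac{1}{\Delta t}\mathcal{A}(w(t^{n+1})) = \tfrac{3w(t^{n+1}) - 4w(t^n) + w(t^{n-1})}{2\Delta t},
\end{equation*}
so the first quantity is a centred second difference of $w$, while the second is the standard BDF2 approximation of $w_t(t^{n+1})$.

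For the first estimate I would expand around $t^n$:
\begin{equation*}
w(t^{n\pm 1}) = w(t^n) \pm \Delta t\, w_t(t^n) + \int_{t^n}^{t^{n\pm 1}}(t^{n\pm 1}-s)\,w_{tt}(s)\,ds,
\end{equation*}
so the $w_t$-terms cancel in the second difference and only $w_{tt}$-remainders survive. Cauchy--Schwarz pointwise in $x\in\Omega$ then yields $\|w(t^{n+1})-2w(t^n)+w(t^{n-1})\|^2_{L^2(\Omega)} \leq C(\Delta t)^3 \int_{t^{n-1}}^{t^{n+1}} \|w_{tt}(s)\|^2_{L^2(\Omega)}\,ds$. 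Multiplying by $\Delta t/4$ and summing (each subinterval is covered at most twice) gives the claimed $C(\Delta t)^4\|w_{tt}\|^2_{L^2(0,T;L^2(\Omega))}$ bound.

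For the second estimate I would expand around $t^{n+1}$ but with one extra order of Taylor, so the integral remainders involve $w_{ttt}$; the BDF2 coefficients are chosen precisely so that the $w(t^{n+1})$, $w_t(t^{n+1})$ and $w_{tt}(t^{n+1})$ contributions from $w(t^n)$ and $w(t^{n-1})$ combine to reproduce $2\Delta t\, w_t(t^{n+1})$, leaving
\begin{equation*}
\tfrac{3w(t^{n+1})-4w(t^n)+w(t^{n-1})}{2\Delta t} - w_t(t^{n+1}) = \tfrac{1}{2\Delta t}\bigl(4R_n - R_{n-1}\bigr),
\end{equation*}
with each $R_k$ a weighted integral of $w_{ttt}$ over a subinterval of length at most $2\Delta t$. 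Cauchy--Schwarz bounds $\|R_k\|^2_{L^2(\Omega)}$ by $C(\Delta t)^5 \int \|w_{ttt}\|^2\,ds$; dividing by $(2\Delta t)^2$, multiplying the squared norm by $\Delta t$ and summing in $n$ again produces the advertised $C(\Delta t)^4\|w_{ttt}\|^2_{L^2(0,T;L^2(\Omega))}$ bound.

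The main obstacle is really just bookkeeping: one must pick the right expansion point in each case (so as to match $w_t(t^{n+1})$ on the nose for the second estimate) and retain one extra Taylor order for $\mathcal{A}$, so that the extra $1/\Delta t$ in front is exactly compensated by the extra $(\Delta t)^2$ gained from Cauchy--Schwarz on a cubic-weighted remainder. After that the estimates follow mechanically and give the optimal fourth-order temporal scaling that underpins the second-order accuracy of the time-filter scheme.
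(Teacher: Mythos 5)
Your proof is correct and follows the standard argument: the paper itself offers no proof of this lemma, merely citing the MHD reference, and the Taylor-with-integral-remainder expansion you give (second-order around $t^n$ for the filter defect $\mathcal{B}(w(t^{n+1}))-w(t^{n+1})=\tfrac12\bigl(w(t^{n+1})-2w(t^n)+w(t^{n-1})\bigr)$, third-order around $t^{n+1}$ for the BDF2-type quotient $\tfrac{1}{\Delta t}\mathcal{A}(w(t^{n+1}))$), combined with Cauchy--Schwarz on the weighted remainders and the bounded-overlap summation, is exactly how such consistency bounds are established in that source. The power counting $(\Delta t)^3$ versus $(\Delta t)^5/(\Delta t)^2$ is right in both cases, so the claimed $(\Delta t)^4$ rates follow.
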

    \noindent The following two lemmas' proofs can be found in the Appendix \ref{Appendix-Lemma-4.2} and Appendix \ref{Appendix-Lemma-4.3}.
    \begin{Lemma}\label{Lemma_Estimation_Truncation_error}
    	For the truncation errors defined in \eqref{eq_truncation_error}, we have
    	\begin{equation*}
    		\begin{aligned}
    			\| R_1 \| &\leq C  \left\|\frac{1}{\Delta t}\mathcal{A}\left(\phi(t^{n+1})\right)- \phi_t(t^{n+1})\right\| + C \|\Delta \left(\mathcal{B}\left(\mu\left(t^{n+1}\right) \right)- \mu(t^{n+1})\right)\|   \\
    			& \quad + C \|\mathcal{B}\left(\boldsymbol{u}(t^{n+1})\right) - \boldsymbol{u}(t^{n+1})\|_1 \cdot \|\mathcal{B}\left(\phi(t^{n+1})\right)\|_2  \\
    			& \quad + C \|\boldsymbol{u}(t^{n+1})\|_1 \cdot \|\phi(t^{n+1}) - \mathcal{B}\left(\phi(t^{n+1})\right)\|_2.  \\
    			\| \nabla R_2 \| &\leq C \|\nabla \left(\mathcal{B}(\mu(t^{n+1}))-\mu(t^{n+1})\right)\| + C \|\nabla \Delta \left(\mathcal{B}(\phi(t^{n+1}))-\phi(t^{n+1})\right)\| \\
    			& \quad  + CL\left(\Delta t\right)^2 \|\phi_{tt}\|_{L^{\infty}(0,T;H^1)}.  \\
    			\| R_3 \| &\leq C\left\|\frac{1}{\Delta t}\mathcal{A}(\boldsymbol{u}(t^{n+1}))-\boldsymbol{u}_t(t^{n+1})\right\| + C\left\|\Delta\left(\mathcal{B}(\boldsymbol{u}(t^{n+1}))-\boldsymbol{u}(t^{n+1})\right)\right\|  \\
    			& \quad + C\|\mathcal{B}\left(\boldsymbol{u}(t^{n+1})\right)\|_1 \cdot \left\|\mathcal{B}\left(\boldsymbol{u}(t^{n+1})\right) - \boldsymbol{u}(t^{n+1})\right\|_2  \\
    			& \quad + C\|\boldsymbol{u}(t^{n+1})\|_2 \cdot \left\|\mathcal{B}\left(\boldsymbol{u}(t^{n+1})\right) - \boldsymbol{u}(t^{n+1})\right\|_1  \\
    			& \quad + C\|\mathcal{B}\left(\mu(t^{n+1})\right)\|_1 \cdot \left\|\mathcal{B}\left(\phi(t^{n+1})\right) - \phi(t^{n+1}) \right\|_2  \\
    			& \quad + C\|\phi(t^{n+1})\|_2 \cdot \left\|\mathcal{B}\left(\mu(t^{n+1})\right) - \mu(t^{n+1})\right\|_1  \\
    			& \quad + C \|\nabla\mathcal{B}(p(t^{n+1}))-p(t^{n+1}))\|.
    		\end{aligned}
    	\end{equation*}
    \end{Lemma}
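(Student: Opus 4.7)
The plan is to prove each of the three estimates by triangle inequality on the definition in \eqref{eq_truncation_error}, isolating the \emph{linear} filter pieces (which appear unchanged in the stated bounds) from the \emph{product} pieces of the form $\mathcal{B}(a)\,\mathrm{op}\,\mathcal{B}(b)-a\,\mathrm{op}\,b$ and, in the case of $R_{2}$, from the \emph{quadratic extrapolation defect} $2f(\phi(t^{n}))-f(\phi(t^{n-1}))-f(\phi(t^{n+1}))$. The linear pieces, namely $\tfrac{1}{\Delta t}\mathcal{A}(\phi(t^{n+1}))-\phi_{t}(t^{n+1})$, $\Delta(\mathcal{B}(\mu(t^{n+1}))-\mu(t^{n+1}))$, their analogues in $R_{3}$, and the pressure filter residual $\nabla(\mathcal{B}(p(t^{n+1}))-p(t^{n+1}))$, fall straight out and contribute exactly as written in the lemma (they will be summed against Lemma \ref{lemma_consistency_error} later at the global-error stage, but at this pointwise stage they are simply carried through).

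For the nonlinear products in $R_{1}$ and $R_{3}$ I use the standard add-and-subtract trick with a single intermediate. In $R_{1}$ I write $\mathcal{B}(\boldsymbol{u}(t^{n+1}))\cdot\nabla\mathcal{B}(\phi(t^{n+1}))-\boldsymbol{u}(t^{n+1})\cdot\nabla\phi(t^{n+1}) = (\mathcal{B}(\boldsymbol{u}(t^{n+1}))-\boldsymbol{u}(t^{n+1}))\cdot\nabla\mathcal{B}(\phi(t^{n+1})) + \boldsymbol{u}(t^{n+1})\cdot\nabla(\mathcal{B}(\phi(t^{n+1}))-\phi(t^{n+1}))$, and bound each $L^{2}$ norm by H\"older in $L^{4}\!-\!L^{4}$ combined with $H^{1}\hookrightarrow L^{4}$ (valid for $d\leq 3$) and $H^{2}\hookrightarrow L^{\infty}$, which yields the announced products $\|\cdot\|_{1}\|\cdot\|_{2}$. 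Exactly the same device handles the convection term of $R_{3}$ (adding and subtracting $\boldsymbol{u}(t^{n+1})\cdot\nabla\mathcal{B}(\boldsymbol{u}(t^{n+1}))$) and the coupling $\mathcal{B}(\mu)\nabla\mathcal{B}(\phi)-\mu\nabla\phi$ (adding and subtracting $\mathcal{B}(\mu(t^{n+1}))\nabla\phi(t^{n+1})$). All five product bounds in the lemma have this identical structure, so the argument is a single template applied repeatedly.

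The subtle piece is the $H^{1}$ control of the defect $E_{n}:=2f(\phi(t^{n}))-f(\phi(t^{n-1}))-f(\phi(t^{n+1}))$ in $R_{2}$, because it is not a $\mathcal{B}$-filter residual (the coefficient vector $(-1,2,-1)$ differs from $\mathcal{B}$'s $(\tfrac{1}{2},-1,\tfrac{3}{2})$) and we must commute $\nabla$ past the composition $f\circ\phi$. My plan is to use the Peano integral remainder $E_{n}=-\int_{t^{n-1}}^{t^{n+1}} K(s)\,\partial_{tt}(f(\phi(s)))\,ds$ with a nonnegative triangular kernel of total mass $O((\Delta t)^{2})$, then apply $\nabla$ and expand $\partial_{tt}(f(\phi))=f'(\phi)\phi_{tt}+f''(\phi)\phi_{t}^{2}$ via the chain rule. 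The leading factor $L=\sup|f'|$ from \eqref{eq_phi_condition} bounds the first term, while the $f''$ contribution is absorbed into the generic constant $C$ using the boundedness of $\phi$ (and hence of $f''$, $f'''$) supplied by the regularity \eqref{eq_regularity_001}; the regularity $\phi_{tt}\in L^{\infty}(0,T;H^{1})$ then produces the stated $CL(\Delta t)^{2}\|\phi_{tt}\|_{L^{\infty}(0,T;H^{1})}$.

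The main obstacle will be the bookkeeping for $\nabla E_{n}$: one has to verify that after applying $\nabla$ to $f'(\phi)\phi_{tt}+f''(\phi)\phi_{t}^{2}$ and expanding with the product rule, every term is controlled by $L\cdot\|\phi_{tt}\|_{L^{\infty}(H^{1})}$ up to a constant, so that only $\phi_{tt}$ (and not $\phi_{ttt}$) appears on the right-hand side and the constant $L$ correctly isolates the dependence on the nonlinear potential. Once this is in place, the rest of the lemma is a mechanical application of triangle inequality and the two Sobolev embeddings mentioned above.
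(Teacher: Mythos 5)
Your treatment of $R_1$ and $R_3$ is exactly the paper's: triangle inequality, a single add-and-subtract intermediate for each product term, and the $L^4$--$L^4$ / $H^2\hookrightarrow L^\infty$ bounds (the paper invokes Lemma \ref{lemma_nonlinear_term_boundness} for these), yielding the same $\|\cdot\|_1\cdot\|\cdot\|_2$ products. The one place you genuinely diverge is the extrapolation defect in $\nabla R_2$. The paper disposes of it in one line, writing $\|\nabla(f(\phi(t^{n+1}))-2f(\phi(t^{n}))+f(\phi(t^{n-1})))\|\leq\|f'(\xi_1)\nabla(\phi(t^{n+1})-2\phi(t^{n})+\phi(t^{n-1}))\|$ and then Taylor-expanding the second difference of $\phi$; strictly speaking this silently commutes $\nabla$ past $f$ with a single intermediate point and drops the commutator contributions of the form $(f'(\phi(t^{i}))-f'(\phi(t^{j})))\nabla\phi(t^{j})$, so it is a shortcut rather than a complete argument. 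Your Peano-kernel representation with $\partial_{tt}(f(\phi))=f'(\phi)\phi_{tt}+f''(\phi)\phi_t^2$ is the honest version and does produce the stated $CL(\Delta t)^2\|\phi_{tt}\|_{L^\infty(0,T;H^1)}$ for the leading term, but be aware of what it costs: after applying $\nabla$ you must control $\|\nabla(f''(\phi)\phi_t^2)\|$, which requires bounds on $f''$, $f'''$ (fine under the truncated-potential reading of \eqref{eq_phi_condition}) and on $\phi_t$ in a $W^{1,4}$-type norm, neither of which is explicitly listed in \eqref{eq_regularity_001}. So your route is more rigorous than the paper's but needs a slightly strengthened regularity hypothesis to close; the paper's route avoids that by an informal mean-value step. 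Both arrive at the same right-hand side.
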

    \begin{Lemma}\label{Lemma_A_e_phi_dt}
    	We can bound as
    	\begin{equation}\label{eq_A_e_phi_dt}
    		\begin{aligned}
    			\left\|\frac{\mathcal{A}(e_{\phi}^{n+1})}{\Delta t}\right\|_{-1} &\leq C\Bigg\|\mathcal{B}(\phi(t^{n+1}))\Bigg\|_2 \cdot \Bigg\|\mathcal{B}(e_{\boldsymbol{u}}^{n+1})\Bigg\| + C\Bigg\|\nabla\mathcal{B}(\boldsymbol{u}^{n+1})\Bigg\| \cdot  \Bigg\|\nabla\left(2e_{\phi}^n-e_{\phi}^{n-1}\right)\Bigg\|  \\
    			& \quad + C\left(\Delta t\right)^2 \Bigg\|\nabla \mathcal{B}(\boldsymbol{u}^{n+1})\Bigg\| \cdot\Bigg\|\phi_{tt}\Bigg\|_{L^{\infty}(0,T;H^1)} + \Bigg\|\nabla \mathcal{B}(e^{n+1}_{\mu})\Bigg\| + C\Bigg\|R_1\Bigg\|.
    		\end{aligned}
    	\end{equation}
    \end{Lemma}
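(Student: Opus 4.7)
The proof plan is to characterize $\|\mathcal{A}(e_{\phi}^{n+1})/\Delta t\|_{-1}$ through its pairing with mean-zero test functions, as permitted by Lemma~2.1. First I would observe that the quantity lies in $\mathring{H}^{-1}(\Omega)$: testing the first equation of \eqref{eq_variational_form_Alg2} with $\psi \equiv 1$ and using $\nabla\cdot\mathcal{B}(\boldsymbol{u}^{n+1})=0$ together with the homogeneous Neumann condition on $\mu$ shows that the scheme conserves mass, and the same holds at the continuous level, hence $\int_\Omega \mathcal{A}(e_\phi^{n+1})\,dx = 0$. It then suffices to bound $\frac{1}{\Delta t}(\mathcal{A}(e_\phi^{n+1}),\psi)$ by a constant times $\|\nabla\psi\|$ for every $\psi\in\mathring{H}^1(\Omega)$ and take the supremum.

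From the error identity \eqref{eq_error_phi}, this pairing equals the sum of a trilinear difference, the diffusion term $-(\nabla\mathcal{B}(e_\mu^{n+1}),\nabla\psi)$, and the consistency pairing $(R_1,\psi)$. The diffusion piece contributes $\|\nabla\mathcal{B}(e_\mu^{n+1})\|\,\|\nabla\psi\|$ immediately by Cauchy--Schwarz, and the $R_1$ piece gives $C\|R_1\|\,\|\nabla\psi\|$ after Cauchy--Schwarz combined with the Poincar\'e inequality $\|\psi\|\leq C\|\nabla\psi\|$, producing the last two summands of \eqref{eq_A_e_phi_dt}.

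The central algebraic step is to decompose the trilinear difference by adding and subtracting $\mathcal{B}(\boldsymbol{u}^{n+1})\cdot\nabla\mathcal{B}(\phi(t^{n+1}))$:
\begin{equation*}
\mathcal{B}(\boldsymbol{u}(t^{n+1}))\cdot\nabla\mathcal{B}(\phi(t^{n+1})) - \mathcal{B}(\boldsymbol{u}^{n+1})\cdot\nabla\bar{\phi}^{n+1} = \mathcal{B}(e_{\boldsymbol{u}}^{n+1})\cdot\nabla\mathcal{B}(\phi(t^{n+1})) + \mathcal{B}(\boldsymbol{u}^{n+1})\cdot\nabla\bigl[\mathcal{B}(\phi(t^{n+1})) - \bar{\phi}^{n+1}\bigr].
\end{equation*}
Paired with $\psi$, the first term is controlled by the fourth estimate of Lemma~\ref{lemma_nonlinear_term_boundness}, namely $|((\boldsymbol{u}\cdot\nabla)v,w)|\leq C\|\boldsymbol{u}\|\,\|v\|_2\,\|w\|_1$, yielding $C\|\mathcal{B}(\phi(t^{n+1}))\|_2\,\|\mathcal{B}(e_{\boldsymbol{u}}^{n+1})\|\,\|\nabla\psi\|$ after Poincar\'e on $\psi$---this is the first summand of \eqref{eq_A_e_phi_dt}. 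For the second term, a direct expansion of the operator $\mathcal{B}$ and of $\bar{\phi}^{n+1}$ yields the algebraic identity
\begin{equation*}
\mathcal{B}(\phi(t^{n+1})) - \bar{\phi}^{n+1} = \tfrac{3}{2}\bigl[\phi(t^{n+1}) - 2\phi(t^n) + \phi(t^{n-1})\bigr] + (2e_\phi^n - e_\phi^{n-1}).
\end{equation*}
The purely discrete contribution $2e_\phi^n - e_\phi^{n-1}$, after invoking the first estimate of Lemma~\ref{lemma_nonlinear_term_boundness} together with Poincar\'e on $\mathcal{B}(\boldsymbol{u}^{n+1})\in\boldsymbol{H}_0^1$ and on the mean-zero $e_\phi^k$, produces the second summand. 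The temporal remainder $\phi(t^{n+1}) - 2\phi(t^n) + \phi(t^{n-1})$ is bounded in $H^1$ by $C(\Delta t)^2\|\phi_{tt}\|_{L^\infty(0,T;H^1)}$ via a standard Taylor-with-integral-remainder argument, producing the $O((\Delta t)^2)$ summand.

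The main obstacle is entirely bookkeeping: one must select the correct variant of Lemma~\ref{lemma_nonlinear_term_boundness} in each instance so that the test-function dependence ends up exactly as $\|\nabla\psi\|$ (as required by the Lemma~2.1 characterization), and verify that Poincar\'e applies to each mean-zero factor ($\psi$, the $e_\phi^k$, and $\mathcal{A}(e_\phi^{n+1})/\Delta t$ itself). Collecting all pieces and taking $\sup_{\psi}$ delivers \eqref{eq_A_e_phi_dt}.
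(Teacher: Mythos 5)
Your proposal is correct and follows essentially the same route as the paper's Appendix~B proof: the paper tests \eqref{eq_error_phi} with $\mathcal{T}\bigl(\mathcal{A}(e_{\phi}^{n+1})/\Delta t\bigr)$, which is just the concrete realization of the duality supremum you invoke, and it uses the identical three-way splitting of the trilinear difference (the $\mathcal{B}(e_{\boldsymbol{u}}^{n+1})$ piece, the discrete error piece $2e_{\phi}^{n}-e_{\phi}^{n-1}$, and the $O((\Delta t)^2)$ consistency piece $\tfrac32[\phi(t^{n+1})-2\phi(t^{n})+\phi(t^{n-1})]$), together with the same Cauchy--Schwarz/Poincar\'e bounds on the diffusion and $R_1$ terms. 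Your additional verification that $\mathcal{A}(e_{\phi}^{n+1})$ is mean-zero is a point the paper leaves implicit but needs anyway for $\mathcal{T}$ to be applicable.
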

	Next, we can obtain the following lemma.
	\begin{Lemma}\label{lemma_main_results}
		We can obtain
		\begin{equation}\label{eq_main_results}
			\begin{aligned}
				\frac{1}{4 \Delta t}&\bigg(\|\nabla e_{\phi}^{n+1}\|^2+\|\nabla \left(2e_{\phi}^{n+1}-e_{\phi}^n\right)\|^2
				+\|\nabla \left(e_{\phi}^{n+1}-e_{\phi}^n\right)\|^2  \\ 
				& \qquad  +\|e_{\boldsymbol{u}}^{n+1}\|^2+\|2e_{\boldsymbol{u}}^{n+1}-e_{\boldsymbol{u}}^n\|^2
				+\|e_{\boldsymbol{u}}^{n+1}-e_{\boldsymbol{u}}^n\|^2\bigg)\\
				&-\frac{1}{4 \Delta t}\bigg(\|\nabla e_{\phi}^{n}\|^2+\|\nabla \left(2e_{\phi}^{n}-e_{\phi}^{n-1}\right)\|^2
				+\|\nabla \left(e_{\phi}^{n}-e_{\phi}^{n-1}\right)\|^2  \\  
				& \qquad +\|e_{\boldsymbol{u}}^{n}\|^2+\|2e_{\boldsymbol{u}}^{n}-e_{\boldsymbol{u}}^{n-1}\|^2
				+\|e_{\boldsymbol{u}}^{n}-e_{\boldsymbol{u}}^{n-1}\|^2\bigg)\\
				& + \frac12\|\nabla\mathcal{B}(e_{\mu}^{n+1})\|^2 + \frac12\|\nabla\mathcal{B}(e_{\boldsymbol{u}}^{n+1})\|^2 + S \|\mathcal{A}(e_{\phi}^{n+1})\|^2\\
				& +\frac{3}{4 \Delta t}\left(\|\nabla \left(e_{\phi}^{n+1}-2e_{\phi}^n+e_{\phi}^{n-1}\right)\|^2
				+\| e_{\boldsymbol{u}}^{n+1}-2e_{\boldsymbol{u}}^n+e_{\boldsymbol{u}}^{n-1}\|^2\right)\\
				\leq ~ & C \|R_1\|^2 + C \|\nabla R_2\|^2 + C \|R_3\|^2 + \epsilon_{\phi}\left\| \frac{\mathcal{A}(e_{\phi}^{n+1})}{\Delta t} \right\|^2_{-1} + C_{1} (\Delta t)^4 \\
				& + C_u \left(\|\mathcal{B}(e^{n+1}_{\boldsymbol{u}})\|^2 + \|2e^{n}_{\boldsymbol{u}} - e^{n-1}_{\boldsymbol{u}} \|^2\right) \\
				& + C_{\phi}\left(\|\nabla e_{\phi}^{n}\|^2+\|\nabla\left(2e_{\phi}^{n}-e_{\phi}^{n-1}\right)\|^2+\|\nabla\left(e_{\phi}^{n}-e_{\phi}^{m-1}\right)\|^2\right),
			\end{aligned}
		\end{equation}
	    where $\epsilon_{\phi}$ is some positive constant, and 
	    \begin{equation}\label{eq_three_symobols}
	    	\begin{aligned}
	    		C_{1} &:= C\left(\|\nabla \mathcal{B}(\boldsymbol{u}^{n+1})\|^2 \cdot\|\phi_{tt}\|^2_{L^{\infty}(0,T;H^1)} + S^2 \|\phi_t\|_{L^{\infty}(0,T;H^1)} \right.  \\
	    		& \left. \quad + \|\nabla\mathcal{B}(\boldsymbol{u}^{n+1})\|^2\cdot \left\|\boldsymbol{u}_{tt} \right\|^2_{L^{\infty}(0,T;H^1)} + \|\nabla\mathcal{B}(\mu^{n+1})\|^2\cdot\|\phi_{tt}\|_{L^{\infty}(0,T;H^1)}^2 \right), \\
	    		C_u &:= C \max\left\{\|\mathcal{B}(\phi(t^{n+1}))\|^2_2, \; \| \mathcal{B}(\boldsymbol{u}(t^{n+1}))\|^2_2\right\} , \\
	    		C_{\phi} &:= C \max\left\{\frac{36L^2}{\epsilon_{\phi}}, \; \frac{12L^2}{\epsilon_{\phi}} + \|\nabla \mathcal{B}(\boldsymbol{u}^{n+1})\|^2 + \|\nabla \mathcal{B}(\mu^{n+1})\|^2\right\}.
	    	\end{aligned}
	    \end{equation}
	\end{Lemma}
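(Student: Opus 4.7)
The plan is to mimic the energy-stability proof of Theorem \ref{theorem_unconditional_stability} applied to the error system \eqref{eq_error_phi}--\eqref{eq_error_incompressible}. First, I would test \eqref{eq_error_phi} with $\psi=\mathcal{B}(e_{\mu}^{n+1})$, \eqref{eq_error_mu} with $\omega=\mathcal{A}(e_{\phi}^{n+1})/\Delta t$ (equivalently, multiply \eqref{eq_error_phi} by $\Delta t$ and pair \eqref{eq_error_mu} with $\mathcal{A}(e_{\phi}^{n+1})$), \eqref{eq_error_ns} with $\boldsymbol{v}=\mathcal{B}(e_{\boldsymbol{u}}^{n+1})$, and \eqref{eq_error_incompressible} with $q=\mathcal{B}(e_{p}^{n+1})$, then sum. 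The bilinear pairings $(\mathcal{B}(e_{\mu}^{n+1}),\mathcal{A}(e_{\phi}^{n+1}))$ cancel across the first two equations, the pressure-divergence pair cancels up to $R_{4}$, and identity \eqref{eq_A_B_form} converts the two $(\mathcal{A}(\cdot),\mathcal{B}(\cdot))/\Delta t$ terms into the telescoping energy differences forming the LHS of the claim, together with the coercive $\tfrac{1}{2}\|\nabla\mathcal{B}(e_{\mu}^{n+1})\|^{2}$, $\tfrac{1}{2}\|\nabla\mathcal{B}(e_{\boldsymbol{u}}^{n+1})\|^{2}$ and the stabilization line $S\|\mathcal{A}(e_{\phi}^{n+1})\|^{2}$ obtained as in \eqref{eq_Deltat_S}.

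Next, each nonlinear difference must be split around the computed iterate. For instance, the phase convection decomposes as
$$\mathcal{B}(\boldsymbol{u}(t^{n+1}))\cdot\nabla\mathcal{B}(\phi(t^{n+1})) - \mathcal{B}(\boldsymbol{u}^{n+1})\cdot\nabla\bar{\phi}^{n+1} = \mathcal{B}(e_{\boldsymbol{u}}^{n+1})\cdot\nabla\mathcal{B}(\phi(t^{n+1})) + \mathcal{B}(\boldsymbol{u}^{n+1})\cdot\nabla\bigl[\mathcal{B}(\phi(t^{n+1}))-\bar{\phi}^{n+1}\bigr],$$
and the second bracket is further split into a Taylor consistency piece of size $(\Delta t)^{2}\phi_{tt}$ (feeding the $\|\nabla\mathcal{B}(\boldsymbol{u}^{n+1})\|^{2}\|\phi_{tt}\|_{L^{\infty}(0,T;H^{1})}^{2}$ summand in $C_{1}(\Delta t)^{4}$) and the genuine error $2e_{\phi}^{n}-e_{\phi}^{n-1}$ (feeding $C_{\phi}$ via Lemma \ref{lemma_nonlinear_term_boundness}). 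Exactly analogous decompositions handle the Navier--Stokes convection and the capillary coupling $\mathcal{B}(\mu)\nabla\mathcal{B}(\phi) - \mathcal{B}(\mu^{n+1})\nabla\bar{\phi}^{n+1}$. The $f$-difference $2f(\phi(t^{n}))-f(\phi(t^{n-1}))-2f(\phi^{n})+f(\phi^{n-1})$ is controlled by the Lipschitz condition \eqref{eq_phi_condition} to yield factors $L\|e_{\phi}^{n}\|+L\|e_{\phi}^{n-1}\|$, contributing the $36L^{2}/\epsilon_{\phi}$ portion of $C_{\phi}$. The residual pairings with $R_{1},R_{2},R_{3}$ are absorbed by Young's inequality into $\|R_{1}\|^{2}$, $\|\nabla R_{2}\|^{2}$, $\|R_{3}\|^{2}$, and $R_{4}$ tested against $\mathcal{B}(e_{p}^{n+1})$ cancels the corresponding divergence line after moving the stray $R_{4}$ into the $\|R_{3}\|^{2}$ bucket through the momentum equation.

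The delicate step---and the reason Lemma \ref{Lemma_A_e_phi_dt} was prepared---is the pairing of $R_{2}$ with $\mathcal{A}(e_{\phi}^{n+1})/\Delta t$ and the capillary cross term $\mathcal{B}(e_{\mu}^{n+1})\nabla\bar{\phi}^{n+1}$ in the velocity test. A naive Hölder bound would consume more than the available $\tfrac{1}{2}\|\nabla\mathcal{B}(e_{\boldsymbol{u}}^{n+1})\|^{2}$ dissipation; instead, one routes the pairing through the $\mathring{H}^{-1}$ norm via $\langle R_{2},\mathcal{A}(e_{\phi}^{n+1})/\Delta t\rangle$ and applies Lemma \ref{Lemma_A_e_phi_dt} to produce the term $\epsilon_{\phi}\|\mathcal{A}(e_{\phi}^{n+1})/\Delta t\|_{-1}^{2}$ on the right-hand side with a small constant to be absorbed later during the Gronwall closure. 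Orchestrating (i) which test function absorbs which consistency piece, (ii) which cross term must be routed through the $H^{-1}$ bound rather than directly through $L^{2}$, and (iii) collecting the correct constants $C_{1}$, $C_{u}$, $C_{\phi}$ as stated in \eqref{eq_three_symobols} is the main obstacle; once this bookkeeping is settled, the inequality \eqref{eq_main_results} follows by assembling all bounds and dividing by $\Delta t$.
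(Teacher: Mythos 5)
Your proposal matches the paper's proof in all essentials: the same test functions $\left(\mathcal{B}(e_{\mu}^{n+1}),\ \mathcal{A}(e_{\phi}^{n+1})/\Delta t,\ \mathcal{B}(e_{\boldsymbol{u}}^{n+1}),\ \mathcal{B}(e_{p}^{n+1})\right)$, the identity \eqref{eq_A_B_form} for the telescoping energies, the same three-way splittings of the convective and capillary differences around the computed iterate, and $H^{-1}$ duality for everything paired with $\mathcal{A}(e_{\phi}^{n+1})/\Delta t$. Two small corrections to your bookkeeping: the paper disposes of $R_4$ simply because $\nabla\cdot\boldsymbol{u}(t)=0$ at the three time levels forces $R_4\equiv 0$ (no rerouting into the $\|R_3\|^2$ bucket is needed), and Lemma \ref{Lemma_A_e_phi_dt} is not invoked inside this lemma at all --- the term $\epsilon_{\phi}\|\mathcal{A}(e_{\phi}^{n+1})/\Delta t\|_{-1}^{2}$ is left on the right-hand side by Young's inequality and Lemma \ref{Lemma_A_e_phi_dt} enters only later, in the proof of Theorem \ref{theorem_main_results}; likewise the capillary cross term is absorbed directly by the $\epsilon_{\mu}/3$ and $\epsilon_{\boldsymbol{u}}/3$ portions of the dissipation rather than through the $H^{-1}$ route.
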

	\begin{proof}
		Setting $\psi=\mathcal{B}(e_{\mu}^{n+1})$ in \eqref{eq_error_phi}, $\omega=\mathcal{A}(e_{\phi}^{n+1})/\Delta t$ in \eqref{eq_error_mu},
		$\boldsymbol{v}=\mathcal{B}(e_{\boldsymbol{u}}^{n+1})$ in \eqref{eq_error_ns}, $q=\mathcal{B}(e_p^{n+1})$ in \eqref{eq_error_incompressible}, and using the fact that $\nabla\cdot\boldsymbol{u}(t)=0$ for $t = t_{n+1}, t_n, t_{n-1}$ to get the vanishing $R_4$, then using \eqref{eq_A_B_form}, we have
		\begin{equation}\label{eq_error_phi_inner_product_e_phi}
			\begin{aligned}
				\frac{1}{4 \Delta t}&\bigg(\|\nabla e_{\phi}^{n+1}\|^2+\|\nabla \left(2e_{\phi}^{n+1}-e_{\phi}^n\right)\|^2
				+\|\nabla \left(e_{\phi}^{n+1}-e_{\phi}^n\right)\|^2  \\ 
				& \qquad  +\|e_{\boldsymbol{u}}^{n+1}\|^2+\|2e_{\boldsymbol{u}}^{n+1}-e_{\boldsymbol{u}}^n\|^2
				+\|e_{\boldsymbol{u}}^{n+1}-e_{\boldsymbol{u}}^n\|^2\bigg)\\
				&-\frac{1}{4 \Delta t}\bigg(\|\nabla e_{\phi}^{n}\|^2+\|\nabla \left(2e_{\phi}^{n}-e_{\phi}^{n-1}\right)\|^2
				+\|\nabla \left(e_{\phi}^{n}-e_{\phi}^{n-1}\right)\|^2  \\  
				& \qquad +\|e_{\boldsymbol{u}}^{n}\|^2+\|2e_{\boldsymbol{u}}^{n}-e_{\boldsymbol{u}}^{n-1}\|^2
				+\|e_{\boldsymbol{u}}^{n}-e_{\boldsymbol{u}}^{n-1}\|^2\bigg)\\
				& + \|\nabla\mathcal{B}(e_{\mu}^{n+1})\|^2 + \|\nabla\mathcal{B}(e_{\boldsymbol{u}}^{n+1})\|^2 \\
				& +\frac{3}{4 \Delta t}\left(\|\nabla \left(e_{\phi}^{n+1}-2e_{\phi}^n+e_{\phi}^{n-1}\right)\|^2
				+\| e_{\boldsymbol{u}}^{n+1}-2e_{\boldsymbol{u}}^n+e_{\boldsymbol{u}}^{n-1}\|^2\right)\\
				=& \left(R_1,\mathcal{B}(e_{\mu}^{n+1})\right) - \left(\mathcal{B}(\boldsymbol{u}(t^{n+1})) \cdot \nabla \mathcal{B}\left(\phi(t^{n+1})\right) - \mathcal{B}(\boldsymbol{u}^{n+1}) \cdot \nabla \bar{\phi}^{n+1}, \mathcal{B}(e_{\mu}^{n+1})\right)\\
				& - \left(2f\left(\phi(t^{n+1})\right)-f(\phi(t^{n}))-\left(2f(\phi^{n})-f(\phi^{n-1})\right), \frac{\mathcal{A}(e_{\phi}^{n+1})}{\Delta t}\right)  \\
				& + S\Delta t \left(\tilde{\phi}^{n+1}-{\phi}^{n},\frac{\mathcal{A}(e_{\phi}^{n+1})}{\Delta t}\right)  + \left(R_2,\frac{\mathcal{A}(e_{\phi}^{n+1})}{\Delta t}\right) + \left(R_3,\mathcal{B}(e_{\boldsymbol{u}}^{n+1})\right)\\
				& - \left(\mathcal{B}(\boldsymbol{u}(t^{n+1}))\cdot\nabla\mathcal{B}(\boldsymbol{u}(t^{n+1}))-\bar{\boldsymbol{u}}^{n+1}\cdot\nabla\mathcal{B}(\boldsymbol{u}^{n+1}),\mathcal{B}(e_{\boldsymbol{u}}^{n+1})\right)\\
				& + \left(\mathcal{B}(\mu(t^{n+1}))\nabla\mathcal{B}(\phi(t^{n+1}))-\mathcal{B}(\mu^{n+1})\nabla\bar{\phi}^{n+1},\mathcal{B}(e_{\boldsymbol{u}}^{n+1})\right).
			\end{aligned}
		\end{equation}
	We bound the terms on the RHS of \eqref{eq_error_phi_inner_product_e_phi}, term by term as follows. Using the definition of $R_1$ in \eqref{eq_truncation_error},  we have 
	\begin{equation}
		\label{eq_first_terms_bound}
		\begin{aligned}
			\bigg|\left(R_1,\mathcal{B}(e_{\mu}^{n+1})\right)\bigg| \leq C \|R_1\|^2 + \frac{\epsilon_{\mu}}{3}  \|\nabla \mathcal{B}(e_{\mu}^{n+1})\|^2 .
		\end{aligned}
	\end{equation}
	The second term on the right-hand side of \eqref{eq_error_phi_inner_product_e_phi} can be estimated by
	\begin{equation}\label{eq_second_term_bound}
		\begin{aligned}
			&~\bigg|\left(\mathcal{B}(\boldsymbol{u}(t^{n+1})) \cdot \nabla \mathcal{B}\left(\phi(t^{n+1})\right) - \mathcal{B}(\boldsymbol{u}^{n+1}) \cdot \nabla \bar{\phi}^{n+1}, \mathcal{B}(e_{\mu}^{n+1})\right)\bigg|\\
			= &~ \bigg|\left(\mathcal{B}(e_{\boldsymbol{u}}^{n+1})\nabla\mathcal{B}(\phi(t^{n+1}))+\mathcal{B}(\boldsymbol{u}^{n+1})\nabla\left(\mathcal{B}(\phi(t^{n+1}))-2\phi(t^{n})+\phi(t^{n-1})\right), \mathcal{B}(e_{\mu}^{n+1})\right)\\
			&~+\left(\mathcal{B}(\boldsymbol{u}^{n+1})\nabla\left(2e_{\phi}^n-e_{\phi}^{n-1}\right),\mathcal{B}(e_{\mu}^{n+1})\right)\bigg|\\
			\leq & ~ C \|\mathcal{B}(\phi(t^{n+1}))\|_2 \|\mathcal{B}(e_{\boldsymbol{u}}^{n+1})\| \|\nabla\mathcal{B}(e_{\mu}^{n+1})\| + C \|\nabla\mathcal{B}(\boldsymbol{u}^{n+1})\| \|\nabla\left(2e_{\phi}^n-e_{\phi}^{n-1}\right)\| \|\nabla \mathcal{B}(e_{\mu}^{n+1})\| \\
			& + C \|\nabla\mathcal{B}(\boldsymbol{u}^{n+1})\| \|\nabla\left(\mathcal{B}(\phi(t^{n+1}))-2\phi(t^{n})+\phi(t^{n-1})\right)\| \|\nabla\mathcal{B}(e_{\mu}^{n+1})\|  \\
			\leq & ~ \frac{\epsilon_{\mu}}{3} \|\nabla\mathcal{B}(e_{\mu}^{n+1})\|^2 + C \|\mathcal{B}(\phi(t^{n+1}))\|^2_2 \cdot\|\mathcal{B}(e_{\boldsymbol{u}}^{n+1})\|^2 + C \left(\Delta t\right)^4 \|\nabla \mathcal{B}(\boldsymbol{u}^{n+1})\|^2 \cdot\|\phi_{tt}\|^2_{L^{\infty}(0,T;H^1)} \\
			& + C \|\nabla \mathcal{B}(\boldsymbol{u}^{n+1})\|^2 \cdot \|\nabla \left(2e_{\phi}^n-e_{\phi}^{n-1}\right)\|^2. 
		\end{aligned}
	\end{equation}
	Using the \eqref{eq_phi_condition}, the third term on the right-hand side of \eqref{eq_error_phi_inner_product_e_phi} can be estimated by
	\begin{equation}
		\label{eq_third_term_bound}
		\begin{aligned}
			&\bigg|\left(2f\left(\phi(t^{n})\right)-f(\phi(t^{n-1}))-\left(2f(\phi^{n})-f(\phi^{n-1})\right), \frac{\mathcal{A}(e_{\phi}^{n+1})}{\Delta t}\right)\bigg|\\
			\leq & \left\|\nabla \left(2f\left(\phi(t^{n})\right)-f(\phi(t^{n-1}))-\left(2f(\phi^{n})-f(\phi^{n-1})\right)\right) \right\|\cdot \left\| \frac{\mathcal{A}(e_{\phi}^{n+1})}{\Delta t} \right\|_{-1}  \\
			\leq & \frac{\epsilon_{\phi}}{3} \left\| \frac{\mathcal{A}(e_{\phi}^{n+1})}{\Delta t} \right\|^2_{-1} + \frac{3}{\epsilon_{\phi}} \|2f^{\prime}\left(\xi_1\right) \nabla e^n_{\phi} - f^{\prime}\left(\xi_2\right) \nabla e^{n-1}_{\phi}\|^2 \\
			\leq & \frac{\epsilon_{\phi}}{3} \left\| \frac{\mathcal{A}(e_{\phi}^{n+1})}{\Delta t} \right\|^2_{-1} + \frac{6L^2}{\epsilon_{\phi}} \left(\|2\nabla e^n_{\phi}\|^2 + \|\nabla e^{n-1}_{\phi}\|^2\right) \\
			\leq & \frac{\epsilon_{\phi}}{3} \left\| \frac{\mathcal{A}(e_{\phi}^{n+1})}{\Delta t} \right\|^2_{-1} + \frac{6L^2}{\epsilon_{\phi}} \left(2\left\|\nabla \left(2e^n_{\phi} - e^{n-1}_{\phi}\right)\right\|^2 + 6\left\|\nabla \left(e^n_{\phi} - e^{n-1}_{\phi}\right)\right\|^2 + 6\left\|\nabla e^n_{\phi}\right\|^2\right).
		\end{aligned}
	\end{equation}
	The fourth and fifth terms on the right-hand side of \eqref{eq_error_phi_inner_product_e_phi} can be bounded by
	\begin{equation}
		\label{eq_fourth_term_bound}
		\begin{aligned}
			&S\Delta t\left(\tilde{\phi}^{n+1}-{\phi}^{n}, \frac{\mathcal{A}(e_{\phi}^{n+1})}{\Delta t}\right) = S\Delta t\left(\mathcal{A}(\phi^{n+1}), \frac{\mathcal{A}(e_{\phi}^{n+1})}{\Delta t}\right) \\
			=~ & S\Delta t\left(\mathcal{A}(\phi(t^{n+1})) - \mathcal{A}(e_{\phi}^{n+1}), \frac{\mathcal{A}(e_{\phi}^{n+1})}{\Delta t}\right)  \\
			=~ & - S \|\mathcal{A}(e_{\phi}^{n+1})\|^2 + S\Delta t\left(\mathcal{A}(\phi(t^{n+1})), \frac{\mathcal{A}(e_{\phi}^{n+1})}{\Delta t}\right)  \\
			\leq & ~ - S \|\mathcal{A}(e_{\phi}^{n+1})\|^2 + S\Delta t \|\nabla \mathcal{A}(\phi(t^{n+1}))\| \cdot \left\|\frac{\mathcal{A}(e_{\phi}^{n+1})}{\Delta t}\right\|_{-1}  \\
			\leq & ~ - S \|\mathcal{A}(e_{\phi}^{n+1})\|^2 + \frac{\epsilon_{\phi}}{3} \left\| \frac{\mathcal{A}(e_{\phi}^{n+1})}{\Delta t} \right\|^2_{-1} + CS^2(\Delta t)^4 \|\phi_t\|^2_{L^{\infty}(0,T;H^1)}.
		\end{aligned}
	\end{equation}
	and 
	\begin{equation}
		\label{eq_fifth_term_bound}
		\begin{aligned}
			\bigg|\left(R_2, \frac{\mathcal{A}(e_{\phi}^{n+1})}{\Delta t}\right)\bigg| \leq C\| \nabla R_2\|^2 + \frac{\epsilon_{\phi}}{3} \left\| \frac{\mathcal{A}(e_{\phi}^{n+1})}{\Delta t} \right\|^2_{-1}.
		\end{aligned}
	\end{equation}
	The sixth term on the right-hand side of \eqref{eq_error_phi_inner_product_e_phi} can be bounded by
	\begin{equation}
		\label{eq_sixth_term_bound}
		\begin{aligned}
			\left|\left(R_3,\mathcal{B}(e_{\boldsymbol{u}}^{n+1})\right)\right| \leq C\|R_3\|^2 + \frac{\epsilon_{\boldsymbol{u}}}{3} \|\nabla\mathcal{B}(e_{\boldsymbol{u}}^{n+1})\|^2.
		\end{aligned}
	\end{equation}
	The seventh term on the right-hand side of \eqref{eq_error_phi_inner_product_e_phi} can be bounded by
	\begin{equation}
		\label{eq_seventh_term_bound}
		\begin{aligned}
			& \bigg|-\left(\mathcal{B}(\boldsymbol{u}(t^{n+1}))\cdot\nabla\mathcal{B}(\boldsymbol{u}(t^{n+1}))-\bar{\boldsymbol{u}}^{n+1}\cdot\nabla\mathcal{B}(\boldsymbol{u}^{n+1}),\mathcal{B}(e_{\boldsymbol{u}}^{n+1})\right)\bigg|\\
			= & \bigg|\left(\mathcal{B}(\boldsymbol{u}(t^{n+1}))\cdot\nabla\mathcal{B}(\boldsymbol{u}(t^{n+1})-\boldsymbol{u}^{n+1})+\left(\mathcal{B}(\boldsymbol{u}(t^{n+1}))-\bar{\boldsymbol{u}}^{n+1}\right)\cdot\nabla\mathcal{B}(\boldsymbol{u}^{n+1}),\mathcal{B}(e_{\boldsymbol{u}}^{n+1})\right)\bigg|\\
			\leq& \bigg|\left(\mathcal{B}(\boldsymbol{u}(t^{n+1}))\cdot\nabla\mathcal{B}(e_{\boldsymbol{u}}^{n+1}),\mathcal{B}(e_{\boldsymbol{u}}^{n+1})\right)\bigg| + \bigg|\left(\left(2e_{\boldsymbol{u}}^{n}-e_{\boldsymbol{u}}^{n-1}\right)\cdot\nabla\mathcal{B}(\boldsymbol{u}(t^{n+1})),\mathcal{B}(e_{\boldsymbol{u}}^{n+1})\right)\bigg|\\
			&   +  \bigg|\left(\left(\mathcal{B}(\boldsymbol{u}(t^{n+1}))-2\boldsymbol{u}(t^{n})+\boldsymbol{u}(t^{n-1})\right)\cdot\nabla\mathcal{B}(\boldsymbol{u}^{n+1}),\mathcal{B}(e_{\boldsymbol{u}}^{n+1})\right)\bigg|  \\
			\leq &  C\|\mathcal{B}(\boldsymbol{u}(t^{n+1}))\|^2_2 \cdot\|2e_{\boldsymbol{u}}^{n}-e_{\boldsymbol{u}}^{n-1}\|^2   
			+ \frac{\epsilon_{\boldsymbol{u}}}{3} \|\nabla\mathcal{B}(e_{\boldsymbol{u}}^{n+1})\|^2   \\ 
			&   + C  \|\nabla\mathcal{B}(\boldsymbol{u}^{n+1})\|^2 \cdot \left\|\nabla \left(\mathcal{B}(\boldsymbol{u}(t^{n+1}))-2\boldsymbol{u}(t^{n})+\boldsymbol{u}(t^{n-1})\right) \right\|^2   \\
			\leq & C  \| \mathcal{B}(\boldsymbol{u}(t^{n+1}))\|^2_2 \cdot\|2e_{\boldsymbol{u}}^{n}-e_{\boldsymbol{u}}^{n-1}\|^2 + \frac{\epsilon_{\boldsymbol{u}}}{3}  \|\nabla\mathcal{B}(e_{\boldsymbol{u}}^{n+1})\|^2 \\
			& +  C\left(\Delta t\right)^4 \|\nabla\mathcal{B}(\boldsymbol{u}^{n+1})\|^2\cdot \left\|\boldsymbol{u}_{tt} \right\|^2_{L^{\infty}(0,T;H^1)}. 
		\end{aligned}
	\end{equation}
	The eighth term on the right-hand side of \eqref{eq_error_phi_inner_product_e_phi} can be bounded by
	\begin{equation}
		\label{eq_eighth_term_bound}
		\begin{aligned}
			&\bigg|\left(\mathcal{B}(\mu(t^{n+1}))\nabla\mathcal{B}(\phi(t^{n+1}))-\mathcal{B}(\mu^{n+1})\nabla\bar{\phi}^{n+1},\mathcal{B}(e_{\boldsymbol{u}}^{n+1})\right)\bigg|\\
			=& \bigg| \left(\mathcal{B}(\mu(t^{n+1})-\mu^{n+1})\nabla \mathcal{B}(\phi(t^{n+1})),\mathcal{B}(e_{\boldsymbol{u}}^{n+1})\right)  \\
			&  +\left(\mathcal{B}(\mu^{n+1})\nabla\left(2e_{\phi}^n-e_{\phi}^{n-1}\right),\mathcal{B}(e_{\boldsymbol{u}}^{n+1})\right) \\
			& + \left(\mathcal{B}(\mu^{n+1})\nabla\left(\mathcal{B}(\phi(t^{n+1}))-2\phi(t^{n})+\phi(t^{n-1})\right),\mathcal{B}(e_{\boldsymbol{u}}^{n+1})\right) \bigg|  \\
			\leq&~  C\|\nabla \mathcal{B}(e_{\mu}^{n+1})\|\cdot\|\mathcal{B}(\phi(t^{n+1}))\|_{2} \cdot\| \mathcal{B}(e_{\boldsymbol{u}}^{n+1})\| \\
			& + C\|\nabla \mathcal{B}(\mu^{n+1})\|\cdot \left\|\nabla \left(2e_{\phi}^{n}-e_{\phi}^{n-1}\right)\right\|\cdot\|\nabla \mathcal{B}(e_{\boldsymbol{u}}^{n+1})\|  \\
			& +  C \|\nabla \mathcal{B}(\mu^{n+1})\|\cdot\|\nabla\left(\phi(t^{n+1})-2\phi(t^n)+\phi(t^{n-1})\right)\|\cdot\|\nabla \mathcal{B}(e_{\boldsymbol{u}}^{n+1})\| \\
			\leq&~  C \|\mathcal{B}(\phi(t^{n+1}))\|^2_{2}\cdot\|\mathcal{B}(e_{\boldsymbol{u}}^{n+1})\|^2 + \frac{\epsilon_{\mu}}{3} \|\nabla \mathcal{B}(e_{\mu}^{n+1})\|^2 \\
			& + C \|\nabla \mathcal{B}(\mu^{n+1})\|^2\cdot \left\|\nabla \left(2e_{\phi}^{n}-e_{\phi}^{n-1}\right)\right\|^2 \\
			&  + \frac{\epsilon_{\boldsymbol{u}}}{3} \|\nabla\mathcal{B}(e_{\boldsymbol{u}}^{n+1})\|^2 
			+ C\left(\Delta t\right)^4\|\nabla\mathcal{B}(\mu^{n+1})\|^2\cdot\|\phi_{tt}\|_{L^{\infty}(0,T;H^1)}^2.
		\end{aligned}
	\end{equation}
    Combining the above inequalities \eqref{eq_first_terms_bound}-\eqref{eq_eighth_term_bound} with \eqref{eq_error_phi_inner_product_e_phi}, and choosing $\epsilon_{\mu} = \epsilon_{\boldsymbol{u}} = 1/2$ can complete this proof.
	\end{proof}

    Finally, we obtain the error estimation of scheme \eqref{eq_variational_form_Alg2} in the following theorem.
    \begin{Theorem}\label{theorem_main_results}
    	Supposing the regularity assumptions \eqref{eq_regularity_001} hold, and assuming $\Delta t$ small enough such that $C^*\Delta t \leq 1$, where 
    	$$
    	\begin{aligned}
    		C^* :=& \max\left\{C \max\left\{\|\mathcal{B}(\phi(t^{n+1}))\|^2_2, \; \|\mathcal{B}(\boldsymbol{u}(t^{n+1}))\|^2_2\right\}, \right. \\
    		& \qquad\; \left. C \max\left\{3L^2, \; L^2 + \|\nabla \mathcal{B}(\boldsymbol{u}^{n+1})\|^2 + \|\nabla \mathcal{B}(\mu^{n+1})\|^2\right\}\right\}.
    	\end{aligned}
    	$$
    	Then, for all $1\leq m\leq N-1$, there exists a positive constant $C$ such that the solution of scheme \eqref{eq_variational_form_Alg2} fulfills error estimations
    	\begin{equation}
    		\begin{aligned}
    			\|\nabla e_{\phi}^{m+1}\|^2&+\|\nabla\left(2e_{\phi}^{m+1}-e_{\phi}^n\right)\|^2+\|\nabla\left(e_{\phi}^{m+1}-e_{\phi}^{m}\right)\|^2  \\
    			& \quad 
    			+\|e_{\boldsymbol{u}}^{m+1}\|^2+\|2e_{\boldsymbol{u}}^{m+1}-e_{\boldsymbol{u}}^m\|^2+\|e_{\boldsymbol{u}}^{m+1}-e_{\boldsymbol{u}}^{m}\|^2\\
    			&+\Delta t\sum_{n=1}^{m}\|\nabla\mathcal{B}(e_{\mu}^{n+1})\|^2 + \Delta t\sum_{n=1}^{m}\|\nabla\mathcal{B}(e_{\boldsymbol{u}}^{n+1})\|^2 + \Delta t\sum_{n=1}^{m}\|\nabla\mathcal{B}(e_{\phi}^{n+1})\|^2  \\
    			\leq & C\Big(\left(\Delta t\right)^4 + \|\nabla e_{\phi}^{1}\|^2 + \|\nabla (2e_{\phi}^{1}-e_{\phi}^{0})\|^2
    			+\|\nabla (e_{\phi}^{n}-e_{\phi}^{0})\|^2  \\
    			& + \|e_{\boldsymbol{u}}^{1}\|^2+\|2e_{\boldsymbol{u}}^{1}-e_{\boldsymbol{u}}^{0}\|^2
    			+\|e_{\boldsymbol{u}}^{1}-e_{\boldsymbol{u}}^{0}\|^2\Big).
    		\end{aligned}
    	\end{equation}
    \end{Theorem}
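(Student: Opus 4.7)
The plan is to derive the stated estimate by converting the one-step bound of Lemma~\ref{lemma_main_results} into a Gronwall-compatible inequality, summing over $n$ to exploit its telescoping structure, and controlling the consistency terms via Lemma~\ref{lemma_consistency_error} and Lemma~\ref{Lemma_Estimation_Truncation_error}. The first preparatory step is to substitute the estimate of Lemma~\ref{Lemma_A_e_phi_dt} into the $\epsilon_\phi\|\mathcal{A}(e_\phi^{n+1})/\Delta t\|_{-1}^2$ term appearing on the right-hand side of \eqref{eq_main_results}. Squaring that bound produces contributions of the form $\|\mathcal{B}(e_{\boldsymbol{u}}^{n+1})\|^2$, $\|\nabla(2e_\phi^n-e_\phi^{n-1})\|^2$, $\|\nabla\mathcal{B}(e_\mu^{n+1})\|^2$, together with $\|R_1\|^2$ and an $(\Delta t)^4$ piece. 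By choosing $\epsilon_\phi$ sufficiently small, the $\|\nabla\mathcal{B}(e_\mu^{n+1})\|^2$ piece is absorbed into the $\tfrac12\|\nabla\mathcal{B}(e_\mu^{n+1})\|^2$ sitting on the left of \eqref{eq_main_results}, while the remaining pieces merge with the coefficients $C_1$, $C_u$, $C_\phi$ already in \eqref{eq_three_symobols}.

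Next, I would multiply the resulting inequality by $\Delta t$ and sum over $n=1,\ldots,m$. The identity \eqref{eq_A_B_form} causes the $\tfrac14$-prefactored norms to telescope, leaving exactly the combination of $\nabla e_\phi$ and $e_{\boldsymbol{u}}$ norms at level $m+1$ minus the same combination at level $1$, while the dissipative terms $\Delta t\|\nabla\mathcal{B}(e_\mu^{n+1})\|^2$, $\Delta t\|\nabla\mathcal{B}(e_{\boldsymbol{u}}^{n+1})\|^2$ and $S\Delta t\|\mathcal{A}(e_\phi^{n+1})\|^2$ accumulate as $\Delta t\sum_{n=1}^{m}$ contributions. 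The third dissipation, combined with the decomposition $\mathcal{B}(e_\phi^{n+1})=\mathcal{A}(e_\phi^{n+1})+e_\phi^n$ and the already-telescoped $\nabla e_\phi$ pieces, then yields the $\Delta t\sum\|\nabla\mathcal{B}(e_\phi^{n+1})\|^2$ term appearing in the theorem statement.

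To bound the consistency contributions, I would invoke Lemma~\ref{lemma_consistency_error} applied with $w=\phi,\mu,\boldsymbol{u},p$, together with Lemma~\ref{Lemma_Estimation_Truncation_error}, and the regularity assumptions \eqref{eq_regularity_001}, producing
\begin{equation*}
\Delta t\sum_{n=0}^{N-1}\bigl(\|R_1\|^2+\|\nabla R_2\|^2+\|R_3\|^2\bigr)\leq C(\Delta t)^4.
\end{equation*}
The $C_1(\Delta t)^4$ contribution is handled similarly, since Lemma~\ref{lemma_boundness_expression} supplies the uniform bounds of $\|\nabla\mathcal{B}(\boldsymbol{u}^{n+1})\|$ and $\|\nabla\mathcal{B}(\mu^{n+1})\|$ hidden inside $C_1$. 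Finally, I would apply Lemma~\ref{lemma_gronwall_inequalities} by identifying $a_n$ with the telescoped energy combination, $b_n$ with the dissipative sum, $c_n$ with the $(\Delta t)^4$ consistency and initial-data pieces, and $d_n$ with the coefficients $C_u,C_\phi$ multiplying previous-step errors. The assumption $C^{*}\Delta t\leq 1$ is precisely what guarantees $\Delta t\,d_n\leq 1$ so the Gronwall conclusion delivers the stated bound.

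The principal obstacle I anticipate is the bookkeeping around $\|\mathcal{A}(e_\phi^{n+1})/\Delta t\|_{-1}$: because the nonlinear potential error is paired with a dual-norm test, we cannot recover $L^2$-in-time control of $\partial_t e_\phi$ directly, and Lemma~\ref{Lemma_A_e_phi_dt} reintroduces $\mathcal{B}(e_{\boldsymbol{u}}^{n+1})$ and $\nabla\mathcal{B}(e_\mu^{n+1})$ on the right. The absorption parameters $\epsilon_\phi,\epsilon_\mu,\epsilon_{\boldsymbol{u}}$ must therefore be fixed in an order that simultaneously leaves strictly positive coefficients on the left-hand side dissipations and produces Gronwall coefficients of size at most $1/\Delta t$; maintaining this multi-parameter absorption consistently across the four coupled error equations, without spoiling the telescoping, is the delicate bookkeeping step.
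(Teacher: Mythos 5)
Your proposal is correct and follows essentially the same route as the paper: insert Lemma \ref{Lemma_A_e_phi_dt} into the right-hand side of \eqref{eq_main_results}, absorb the resulting $\|\nabla\mathcal{B}(e_{\mu}^{n+1})\|^2$ contribution by taking $\epsilon_{\phi}$ small (the paper fixes $\epsilon_{\phi}=1/4$), multiply by $\Delta t$, telescope the sum, control the truncation terms by $C(\Delta t)^4$ via Lemmas \ref{lemma_consistency_error} and \ref{Lemma_Estimation_Truncation_error} together with the regularity assumptions and the stability bounds of Lemma \ref{lemma_boundness_expression}, and close with the discrete Gr\"onwall inequality under $C^{*}\Delta t\leq 1$. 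Your remark on how $\Delta t\sum_{n}\|\nabla\mathcal{B}(e_{\phi}^{n+1})\|^2$ is recovered is in fact slightly more explicit than the paper's own write-up, which leaves that step implicit.
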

    \begin{proof}
    	Inserting \eqref{eq_A_e_phi_dt} in the Lemma \ref{Lemma_A_e_phi_dt} into the \eqref{eq_main_results}  in the Lemma \ref{lemma_main_results} to get
    	\begin{equation}
    		\begin{aligned}
    			\frac{1}{4 \Delta t}&\bigg(\|\nabla e_{\phi}^{n+1}\|^2+\|\nabla \left(2e_{\phi}^{n+1}-e_{\phi}^n\right)\|^2
    			+\|\nabla \left(e_{\phi}^{n+1}-e_{\phi}^n\right)\|^2  \\ 
    			& \qquad  +\|e_{\boldsymbol{u}}^{n+1}\|^2+\|2e_{\boldsymbol{u}}^{n+1}-e_{\boldsymbol{u}}^n\|^2
    			+\|e_{\boldsymbol{u}}^{n+1}-e_{\boldsymbol{u}}^n\|^2\bigg)\\
    			&-\frac{1}{4 \Delta t}\bigg(\|\nabla e_{\phi}^{n}\|^2+\|\nabla \left(2e_{\phi}^{n}-e_{\phi}^{n-1}\right)\|^2
    			+\|\nabla \left(e_{\phi}^{n}-e_{\phi}^{n-1}\right)\|^2  \\  
    			& \qquad +\|e_{\boldsymbol{u}}^{n}\|^2+\|2e_{\boldsymbol{u}}^{n}-e_{\boldsymbol{u}}^{n-1}\|^2
    			+\|e_{\boldsymbol{u}}^{n}-e_{\boldsymbol{u}}^{n-1}\|^2\bigg)\\
    			& + \frac12\|\nabla\mathcal{B}(e_{\mu}^{n+1})\|^2 + \frac12\|\nabla\mathcal{B}(e_{\boldsymbol{u}}^{n+1})\|^2 + S \|\mathcal{A}(e_{\phi}^{n+1})\|^2\\
    			& +\frac{3}{4 \Delta t}\left(\|\nabla \left(e_{\phi}^{n+1}-2e_{\phi}^n+e_{\phi}^{n-1}\right)\|^2
    			+\| e_{\boldsymbol{u}}^{n+1}-2e_{\boldsymbol{u}}^n+e_{\boldsymbol{u}}^{n-1}\|^2\right)\\
    			\leq ~ & C \|R_1\|^2 + C \|\nabla R_2\|^2 + C \|R_3\|^2 + \epsilon_{\phi}\left\|\nabla \mathcal{B}(e^{n+1}_{\mu})\right\| ^2 + C_{1} (\Delta t)^4 \\
    			& + \tilde{C}_u \left(\|\mathcal{B}(e^{n+1}_{\boldsymbol{u}})\|^2 + \|2e^{n}_{\boldsymbol{u}} - e^{n-1}_{\boldsymbol{u}} \|^2\right) \\
    			& + \tilde{C}_{\phi}\left(\|\nabla e_{\phi}^{n}\|^2+\|\nabla\left(2e_{\phi}^{n}-e_{\phi}^{n-1}\right)\|^2+\|\nabla\left(e_{\phi}^{n}-e_{\phi}^{m-1}\right)\|^2\right),
    		\end{aligned}
    	\end{equation}
    	where $C_{1}$ is defined in \eqref{eq_three_symobols}, $\epsilon_{\phi}$ is some positive constant, and 
    	\begin{equation}
    		\begin{aligned}
    			\tilde{C}_u &:= C \max\left\{2\|\mathcal{B}(\phi(t^{n+1}))\|^2_2, \; \|\mathcal{B}(\boldsymbol{u}(t^{n+1}))\|^2_2\right\},  \\
    			\tilde{C}_{\phi} &:= C \max\left\{\frac{36L^2}{\epsilon_{\phi}}, \; \frac{12L^2}{\epsilon_{\phi}} + 2\|\nabla \mathcal{B}(\boldsymbol{u}^{n+1})\|^2 + \|\nabla \mathcal{B}(\mu^{n+1})\|^2\right\}.
    		\end{aligned}
    	\end{equation}
    	Using the definition of $\mathcal{B}(s^{n+1})$ to obtain, for some $s$, 
    	\begin{equation}
    		\begin{aligned}
    			\|\mathcal{B}(s^{n+1})\|^2 
    			&= \left\|\frac32s^{n+1} - s^{n} + \frac12s^{n-1}\right\|^2  \\
    			&= \frac14 \left\|s^{n+1} + 2s^{n+1} - s^{n} - (s^{n} - s^{n-1})\right\|^2 \\
    			&\leq \frac34 \left(\left\|s^{n+1}\right\|^2 + \left\|2s^{n+1} - s^{n}\right\|^2 + \left\|s^{n} - s^{n-1}\right\|^2\right) \\
    			& \leq \frac34 \left(\left\|s^{n+1}\right\|^2 + \left\|2s^{n+1} - s^{n}\right\|^2 + \left\|s^{n+1} - s^{n}\right\|^2\right) \\
    			& \quad + \frac34 \left(\left\|s^{n}\right\|^2 + \left\|2s^{n} - s^{n-1}\right\|^2 + \left\|s^{n} - s^{n-1}\right\|^2\right),
    		\end{aligned}
    	\end{equation}
        and then, using Lemma \ref{Lemma_Estimation_Truncation_error}, setting $\epsilon_{\phi} = 1/4$, multiplying $4\Delta t$ on both sides and adding from $n=1$ to $n=m(\leq N-1)$ in \eqref{lemma_main_results}, the above inequality becomes
        \begin{equation}
        	\begin{aligned}
        		&\|\nabla e_{\phi}^{m+1}\|^2+\|\nabla \left(2e_{\phi}^{m+1}-e_{\phi}^m\right)\|^2
        		+\|\nabla \left(e_{\phi}^{m+1}-e_{\phi}^m\right)\|^2  \\ 
        		& \qquad  +\|e_{\boldsymbol{u}}^{m+1}\|^2+\|2e_{\boldsymbol{u}}^{m+1}-e_{\boldsymbol{u}}^m\|^2
        		+\|e_{\boldsymbol{u}}^{m+1}-e_{\boldsymbol{u}}^m\|^2 \\
        		& \qquad + 3\Delta t\sum_{n=1}^{m}\left(\|\nabla \left(e_{\phi}^{n+1}-2e_{\phi}^n+e_{\phi}^{n-1}\right)\|^2
        		+\| e_{\boldsymbol{u}}^{n+1}-2e_{\boldsymbol{u}}^n+e_{\boldsymbol{u}}^{n-1}\|^2\right) \\
        		& \qquad + \Delta t\sum_{n=1}^{m}\|\nabla\mathcal{B}(e_{\mu}^{n+1})\|^2 + 2\Delta t\sum_{n=1}^{m}\|\nabla\mathcal{B}(e_{\boldsymbol{u}}^{n+1})\|^2 + 4S\Delta t\sum_{n=1}^{m}\|\mathcal{A}(e_{\phi}^{n+1})\|^2  \\
        		& \leq \|\nabla e_{\phi}^{1}\|^2+\|\nabla \left(2e_{\phi}^{1}-e_{\phi}^0\right)\|^2
        		+ \|\nabla \left(e_{\phi}^{1}-e_{\phi}^0\right)\|^2   + \|e_{\boldsymbol{u}}^{1}\|^2+\|2e_{\boldsymbol{u}}^{1}-e_{\boldsymbol{u}}^0\|^2 + \|e_{\boldsymbol{u}}^{1}-e_{\boldsymbol{u}}^0\|^2  \\
        		& \qquad + C \Delta t\sum_{n=1}^{m} \hat{C}_{\Delta t} + C_{2} \Delta t\sum_{n=1}^{m} (\Delta t)^4 + \Delta t\sum_{n=0}^{m} C^* \bigg(\|e_{\boldsymbol{u}}^{n+1}\|^2+\|2e_{\boldsymbol{u}}^{n+1}-e_{\boldsymbol{u}}^n\|^2 +\|e_{\boldsymbol{u}}^{n+1}-e_{\boldsymbol{u}}^n\|^2  \\ 
        		& \qquad  + \|\nabla e_{\phi}^{n+1}\|^2+\|\nabla \left(2e_{\phi}^{n+1}-e_{\phi}^n\right)\|^2 + \|\nabla \left(e_{\phi}^{n+1}-e_{\phi}^n\right)\|^2\bigg),
        	\end{aligned}
        \end{equation}
        where 
        \begin{equation}
        	\begin{aligned}
        		\hat{C}_{\Delta t} :=& C  \left\|\frac{1}{\Delta t}\mathcal{A}\left(\phi(t^{n+1})\right)- \phi_t(t^{n+1})\right\| + C \|\Delta \left(\mathcal{B}\left(\mu\left(t^{n+1}\right) \right)- \mu(t^{n+1})\right)\|   \\
        		& \quad + C \|\mathcal{B}\left(\boldsymbol{u}(t^{n+1})\right) - \boldsymbol{u}(t^{n+1})\|_1 \cdot \|\mathcal{B}\left(\phi(t^{n+1})\right)\|_2  \\
        		& \quad + C \|\boldsymbol{u}(t^{n+1})\|_1 \cdot \|\phi(t^{n+1}) - \mathcal{B}\left(\phi(t^{n+1})\right)\|_2  \\
        		& \quad + C \|\nabla \left(\mathcal{B}(\mu(t^{n+1}))-\mu(t^{n+1})\right)\| + C \|\nabla \Delta \left(\mathcal{B}(\phi(t^{n+1}))-\phi(t^{n+1})\right)\| \\
        		& \quad + C\left\|\frac{1}{\Delta t}\mathcal{A}(\boldsymbol{u}(t^{n+1}))-\boldsymbol{u}_t(t^{n+1})\right\| + C\left\|\Delta\left(\mathcal{B}(\boldsymbol{u}(t^{n+1}))-\boldsymbol{u}(t^{n+1})\right)\right\|  \\
        		& \quad + C\|\mathcal{B}\left(\boldsymbol{u}(t^{n+1})\right)\|_1 \cdot \left\|\mathcal{B}\left(\boldsymbol{u}(t^{n+1})\right) - \boldsymbol{u}(t^{n+1})\right\|_2  \\
        		& \quad + C\|\boldsymbol{u}(t^{n+1})\|_2 \cdot \left\|\mathcal{B}\left(\boldsymbol{u}(t^{n+1})\right) - \boldsymbol{u}(t^{n+1})\right\|_1  \\
        		& \quad + C\|\mathcal{B}\left(\mu(t^{n+1})\right)\|_1 \cdot \left\|\mathcal{B}\left(\phi(t^{n+1})\right) - \phi(t^{n+1}) \right\|_2  \\
        		& \quad + C\|\phi(t^{n+1})\|_2 \cdot \left\|\mathcal{B}\left(\mu(t^{n+1})\right) - \mu(t^{n+1})\right\|_1  \\
        		& \quad + C \|\nabla\mathcal{B}(p(t^{n+1}))-p(t^{n+1}))\|,  \\
        		C_{2} :=& C_{1} + CL^2\left(\Delta t\right)^4 \|\phi_{tt}\|^2_{L^{\infty}(0,T;H^1)},  \\
        		C^* :=& \max\left\{\frac{10}{4}\tilde{C}_u, \tilde{C}_{\phi}\right\}.
        	\end{aligned}
        \end{equation}
    	Finally, using Lemma \ref{lemma_consistency_error}, regularity assumptions \ref{eq_regularity_001}, and stability in \ref{lemma_boundness_expression}, and assuming $\Delta t$ small enough such that $C^*\Delta t \leq 1$, where $C^*$ is defined above, and then applying the discrete Gr\"{o}nwall lemma \ref{lemma_gronwall_inequalities}, we can obtain the results of Theorem \ref{theorem_main_results}.
    \end{proof}

	\section{Numerical result}\label{section_numerical_tests}
	 In this section, we verify the convergence and stability of the fully implicit scheme \eqref{eq_FIBETF_scheme0001}-\eqref{eq_FIBETF_scheme0002}, and semi-implicit scheme \eqref{eq_algorithm3_1}-\eqref{eq_algorithm3_2} through several numerical experiments, mainly focusing on the results of Theorem \ref{theorem_unconditional_stability} and Theorem \ref{theorem_main_results} . We use the finite elements methods to discretize spatial variables, where the $P_1$ elements are used for $\phi^{n+1}$ and $\mu^{n+1}$, and the $\boldsymbol{P}_2\times P_1$ elements for $\boldsymbol{u}^{n+1}$ and $p^{n+1}$. We set $L = 2$ in \eqref{eq_phi_condition}, and the stabilization parameter $S = 3L/\Delta t = 3/\Delta t$ in the second-order linear TF scheme \eqref{eq_algorithm3_1}-\eqref{eq_algorithm3_2}; in particular, in the convergence rate verification test, $S$ corresponds fixedly to the smallest $\Delta t$. The simulations of the last two practical examples are obtained via scheme \eqref{eq_algorithm3_1}-\eqref{eq_algorithm3_2}.
	 
	 \subsection{Convergence rate verification}
	 The manufactured solutions are chosen from \cite{2022_ChenYaoyao_CHNS_2022_AMC} and takes the form
	 $$
	 \left\{
	 \begin{aligned}
	 	\phi(t,x,y) &= 2 + \sin(t)\cos(\pi x)\cos(\pi y),  \\
	 	\boldsymbol{u}(t,x,y) &= \left[\pi\sin(\pi x)^2\sin(2\pi y)\sin(t), -\pi\sin(\pi y)^2\sin(2\pi x)\sin(t)\right]^{\text{T}},  \\
	 	p(t,x,y) &= \cos(\pi x)\sin(\pi y)\sin(t),
	 \end{aligned}\right.
	 $$
	 and the exact chemical potential $\mu(t,x,y)$ is obtained by its definition, i.e., 
	 $$
	 \begin{aligned}
	 	\mu(t,x,y) :=&  -\epsilon \Delta \phi(t,x,y) + \frac{1}{\epsilon} \left(\phi(t,x,y)^3 - \phi(t,x,y)\right) \\
	 	=&~ 2\epsilon \pi^2 \cos(\pi x)\cos(\pi y) \sin(t)  \\
	 	&-\frac{1}{\epsilon}\left[\cos(\pi x)\cos(\pi y) \sin(t) - \big(\cos(\pi x)\cos(\pi y) \sin(t) + 2\big)^3 + 2 \right].
	 \end{aligned}
	 $$
	 The domain is defined as $\Omega = [0,1]\times[0,1]$, and the time is $T = 1$. The physical parameters are selected as $M = 0.01$, $\gamma = 0.01$, $\epsilon = 0.2$, $\nu = 1$. We set $S = 1$ in the first-order semi-implicit scheme \eqref{eq_BE_scheme0002}, and the time-step $\Delta t$ is set up as $\Delta t = h$ to balance the convergence rates between time and space. This experiment aims to validate the second-order temporal error convergence of the backward Euler time filtering (BE-TF) algorithm. To highlight the effects of the TF technique in enhancing temporal convergence rates, we first show the first-order convergence of for the nonlinear \textbf{Algorithm 1} in \autoref{FullyImplicitBE_error_convRates} and the linear \textbf{Algorithm 2} in \autoref{BE_error_convRates}. Then, we present the errors and convergence rates of the nonlinear and linear BE-TF schemes, i.e., \textbf{Algorithm 3} and \textbf{Algorithm 4}, with the pressure filtered (Option A) in \autoref{BETF_FullyImplicit_2_error_convRates}, \autoref{BETF_LinearlyImplicit_2_error_convRates} and without the pressure filtered (Option B) in \autoref{BETF_FullyImplicit_2_error_convRates_withoutPresTimeFilter}, \autoref{BETF_LinearlyImplicit_2_error_convRates_withoutPresTimeFilter}, respectively. Moreover, the numerical test for the variable stepsize \textbf{Algorithm 5} is presented in \autoref{ConvRate_variable_stepsize_timeFilter}. By those comparisons, we confirm that the TF technique indeed elevates the temporal accuracy of the BE method from first- to second-order, for the constant and variable stepsize. Moreover, the numerical verification of adaptive stepsize \textbf{Algorithm 6} is presented in \autoref{ConvRate_adaptive_stepsize_timeFilter}, which confirms the effectiveness of TF in constructing adaptive stepsize algorithms.
	 
     \begin{table}[h!]
     	\centering
     	\fontsize{10}{10}
     	\begin{threeparttable}
     		\caption{Errors and convergence orders of the FIMBE scheme.}
     		\label{FullyImplicitBE_error_convRates}
     		\begin{tabular}{c|c|c|c|c|c|c|c|c}
     			\toprule
     			\multirow{2.5}{*}{$\Delta t$}  & \multicolumn{2}{c}{$\left\|\phi^N - \phi^N_h\right\|_{L^2}$} & \multicolumn{2}{c}{$\left\|\mu^N - \mu^N_h\right\|_{L^2}$} & \multicolumn{2}{c}{$\left\|\boldsymbol{u}^N - \boldsymbol{u}^N_h\right\|_{L^2}$} & \multicolumn{2}{c}{$\left\|p^N - p^N_h\right\|_{L^2}$} \cr
     			\cmidrule(lr){2-3} \cmidrule(lr){4-5} \cmidrule(lr){6-7}  \cmidrule(lr){8-9}
     			& error & rate & error & rate & error & rate & error & rate \cr
     			\midrule
     			1/4    & 4.1463e$-$02 &  -       & 2.9460e$-$00  & -       & 4.4755e$-$02  & -      & 3.0406e$-$01 & -      \cr
     			1/8    & 1.3583e$-$02 &  1.6101  & 9.9744e$-$01  & 1.5625  & 5.8930e$-$03  & 2.9250 & 2.5481e$-$02 & 3.5768 \cr
     			1/16   & 4.9467e$-$03 &  1.4572  & 3.4718e$-$01  & 1.5226  & 1.0236e$-$03  & 2.5254 & 4.4821e$-$03 & 2.5072 \cr
     			1/32   & 1.9784e$-$03 &  1.3221  & 1.3092e$-$01  & 1.4070  & 3.5189e$-$04  & 1.5404 & 1.8951e$-$03 & 1.2419 \cr
     			1/64   & 8.6787e$-$04 &  1.1888  & 5.4757e$-$02  & 1.2576  & 1.6846e$-$04  & 1.0627 & 8.9368e$-$04 & 1.0844 \cr
     			1/128  & 4.0468e$-$04 &  1.1007  & 2.4769e$-$02  & 1.1445  & 8.4019e$-$05  & 1.0036 & 4.3346e$-$04 & 1.0439 \cr
     			\bottomrule
     		\end{tabular}
     	\end{threeparttable}
     \end{table}
     \begin{table}[h!]
     	\centering
     	\fontsize{10}{10}
     	\begin{threeparttable}
     		\caption{Errors and convergence orders of the SIMBE scheme.}
     		\label{BE_error_convRates}
     		\begin{tabular}{c|c|c|c|c|c|c|c|c}
     			\toprule
     			\multirow{2.5}{*}{$\Delta t$}  & \multicolumn{2}{c}{$\left\|\phi^N - \phi^N_h\right\|_{L^2}$} & \multicolumn{2}{c}{$\left\|\mu^N - \mu^N_h\right\|_{L^2}$} & \multicolumn{2}{c}{$\left\|\boldsymbol{u}^N - \boldsymbol{u}^N_h\right\|_{L^2}$} & \multicolumn{2}{c}{$\left\|p^N - p^N_h\right\|_{L^2}$} \cr
     			\cmidrule(lr){2-3} \cmidrule(lr){4-5} \cmidrule(lr){6-7}  \cmidrule(lr){8-9}
     			& error & rate & error & rate & error & rate & error & rate \cr
     			\midrule
     			1/4    & 5.4034e$-$02 &  -       & 2.3858e$-$00  & -       & 4.4746e$-$02  & -      & 5.4907e$-$01 & -      \cr
     			1/8    & 2.7351e$-$02 &  0.9823  & 5.5371e$-$01  & 2.1072  & 5.9016e$-$03  & 2.9224 & 1.9337e$-$01 & 1.5057 \cr
     			1/16   & 1.5472e$-$02 &  0.8219  & 1.6034e$-$01  & 1.7880  & 1.0383e$-$03  & 2.5069 & 8.9099e$-$02 & 1.1179 \cr
     			1/32   & 8.2934e$-$03 &  0.8996  & 7.8600e$-$02  & 1.0286  & 3.6209e$-$04  & 1.5198 & 4.3259e$-$02 & 1.0424 \cr
     			1/64   & 4.2910e$-$03 &  0.9506  & 4.4297e$-$02  & 0.8273  & 1.7364e$-$04  & 1.0602 & 2.1342e$-$02 & 1.0193 \cr
     			1/128  & 2.1818e$-$03 &  0.9758  & 2.3905e$-$02  & 0.8899  & 8.6582e$-$05  & 1.0040 & 1.0601e$-$02 & 1.0095 \cr
     			\bottomrule
     		\end{tabular}
     	\end{threeparttable}
     \end{table}
     \begin{table}[h!]
     	\centering
     	\fontsize{10}{10}
     	\begin{threeparttable}
     		\caption{Errors and convergence orders of the nonlinear FIMBE-TF algorithm with the pressure filtered.}\label{BETF_FullyImplicit_2_error_convRates}
     		\begin{tabular}{c|c|c|c|c|c|c|c|c}
     			\toprule
     			\multirow{2.5}{*}{$\Delta t$}  & \multicolumn{2}{c}{$\left\|\phi^N - \phi^N_h\right\|_{L^2}$} & \multicolumn{2}{c}{$\left\|\mu^N - \mu^N_h\right\|_{L^2}$} & \multicolumn{2}{c}{$\left\|\boldsymbol{u}^N - \boldsymbol{u}^N_h\right\|_{L^2}$} & \multicolumn{2}{c}{$\left\|p^N - p^N_h\right\|_{L^2}$} \cr
     			\cmidrule(lr){2-3} \cmidrule(lr){4-5} \cmidrule(lr){6-7}  \cmidrule(lr){8-9}
     			& error & rate & error & rate & error & rate & error & rate \cr
     			\midrule
     			1/4    & 3.6396e$-$02 &  -       & 2.6110e$-$00  & -       & 3.5544e$-$02  & -      & 3.2404e$-$01 & -      \cr
     			1/8    & 8.6714e$-$03 &  2.0695  & 6.7843e$-$01  & 1.9443  & 7.8231e$-$03  & 2.1838 & 2.5500e$-$02 & 3.6676 \cr
     			1/16   & 2.1569e$-$03 &  2.0073  & 1.7584e$-$01  & 1.9480  & 2.0723e$-$03  & 1.9165 & 2.6372e$-$03 & 3.2735 \cr
     			1/32   & 5.3649e$-$04 &  2.0073  & 4.4181e$-$02  & 1.9927  & 5.3394e$-$04  & 1.9565 & 4.9211e$-$04 & 2.4220 \cr
     			1/64   & 1.3357e$-$04 &  2.0060  & 1.1033e$-$02  & 2.0015  & 1.3526e$-$04  & 1.9810 & 1.1879e$-$04 & 2.0506 \cr
     			1/128  & 3.3308e$-$05 &  2.0037  & 2.7541e$-$03  & 2.0022  & 3.4017e$-$05  & 1.9914 & 2.9642e$-$05 & 2.0027 \cr
     			\bottomrule
     		\end{tabular}
     	\end{threeparttable}
     \end{table}
     \begin{table}[h!]
     	\centering
     	\fontsize{10}{10}
     	\begin{threeparttable}
     		\caption{Errors and convergence orders of the nonlinear FIMBE-TF algorithm without the pressure filtered}\label{BETF_FullyImplicit_2_error_convRates_withoutPresTimeFilter}
     		\begin{tabular}{c|c|c|c|c|c|c|c|c}
     			\toprule
     			\multirow{2.5}{*}{$\Delta t$}  & \multicolumn{2}{c}{$\left\|\phi^N - \phi^N_h\right\|_{L^2}$} & \multicolumn{2}{c}{$\left\|\mu^N - \mu^N_h\right\|_{L^2}$} & \multicolumn{2}{c}{$\left\|\boldsymbol{u}^N - \boldsymbol{u}^N_h\right\|_{L^2}$} & \multicolumn{2}{c}{$\left\|p^N - p^N_h\right\|_{L^2}$} \cr
     			\cmidrule(lr){2-3} \cmidrule(lr){4-5} \cmidrule(lr){6-7}  \cmidrule(lr){8-9}
     			& error & rate & error & rate & error & rate & error & rate \cr
     			\midrule
     			1/4    & 3.6396e$-$02 &  -       & 2.6110e$-$00  & -       & 3.5544e$-$02  & -      & 3.0602e$-$01 & -      \cr
     			1/8    & 8.6714e$-$03 &  2.0695  & 6.7843e$-$01  & 1.9443  & 7.8231e$-$03  & 2.1838 & 2.5207e$-$02 & 3.6017 \cr
     			1/16   & 2.1569e$-$03 &  2.0073  & 1.7584e$-$01  & 1.9480  & 2.0723e$-$03  & 1.9165 & 2.5113e$-$03 & 3.3273 \cr
     			1/32   & 5.3649e$-$04 &  2.0073  & 4.4181e$-$02  & 1.9927  & 5.3394e$-$04  & 1.9565 & 4.4881e$-$04 & 2.4843 \cr
     			1/64   & 1.3357e$-$04 &  2.0060  & 1.1033e$-$02  & 2.0015  & 1.3526e$-$04  & 1.9810 & 1.0734e$-$04 & 2.0639 \cr
     			1/128  & 3.3308e$-$05 &  2.0037  & 2.7541e$-$03  & 2.0022  & 3.4017e$-$05  & 1.9914 & 2.6746e$-$04 & 2.0048 \cr
     			\bottomrule
     		\end{tabular}
     	\end{threeparttable}
     \end{table}
	 \begin{table}[h!]
	 	\centering
	 	\fontsize{10}{10}
	 	\begin{threeparttable}
	 		\caption{Errors and convergence orders of the linear SIMBE-TF algorithm with the pressure filtered.}\label{BETF_LinearlyImplicit_2_error_convRates}
	 		\begin{tabular}{c|c|c|c|c|c|c|c|c}
	 			\toprule
	 			\multirow{2.5}{*}{$\Delta t$}  & \multicolumn{2}{c}{$\left\|\phi^N - \phi^N_h\right\|_{L^2}$} & \multicolumn{2}{c}{$\left\|\mu^N - \mu^N_h\right\|_{L^2}$} & \multicolumn{2}{c}{$\left\|\boldsymbol{u}^N - \boldsymbol{u}^N_h\right\|_{L^2}$} & \multicolumn{2}{c}{$\left\|p^N - p^N_h\right\|_{L^2}$} \cr
	 			\cmidrule(lr){2-3} \cmidrule(lr){4-5} \cmidrule(lr){6-7}  \cmidrule(lr){8-9}
	 			& error & rate & error & rate & error & rate & error & rate \cr
	 			\midrule
	 			1/4    & 2.2447e$-$01 &  -       & 11.471e$-$00  & -       & 3.5603e$-$02  & -      & 3.0612e$-$01 & -      \cr
	 			1/8    & 1.7215e$-$01 &  0.3829  & 10.424e$-$00  & 0.1381  & 8.1150e$-$03  & 2.1333 & 6.9073e$-$02 & 2.1479 \cr
	 			1/16   & 6.2742e$-$02 &  1.4562  & 3.3290e$-$00  & 1.6467  & 2.0473e$-$03  & 1.9869 & 2.2671e$-$02 & 1.6072 \cr
	 			1/32   & 1.6435e$-$02 &  1.9326  & 7.3751e$-$01  & 2.1744  & 5.3035e$-$04  & 1.9487 & 5.6622e$-$03 & 2.0014 \cr
	 			1/64   & 4.1262e$-$03 &  1.9939  & 1.7934e$-$01  & 2.0400  & 1.3580e$-$04  & 1.9655 & 1.4217e$-$03 & 1.9937 \cr
	 			1/128  & 1.0322e$-$03 &  1.9991  & 4.4598e$-$02  & 2.0076  & 3.4257e$-$05  & 1.9870 & 3.5639e$-$04 & 1.9961 \cr
	 			\bottomrule
	 		\end{tabular}
	 	\end{threeparttable}
	 \end{table}
	 \begin{table}[h!]
	 	\centering
	 	\fontsize{10}{10}
	 	\begin{threeparttable}
	 		\caption{Errors and convergence orders of the linear SIMBE-TF algorithm without the pressure filtered}\label{BETF_LinearlyImplicit_2_error_convRates_withoutPresTimeFilter}
	 		\begin{tabular}{c|c|c|c|c|c|c|c|c}
	 			\toprule
	 			\multirow{2.5}{*}{$\Delta t$}  & \multicolumn{2}{c}{$\left\|\phi^N - \phi^N_h\right\|_{L^2}$} & \multicolumn{2}{c}{$\left\|\mu^N - \mu^N_h\right\|_{L^2}$} & \multicolumn{2}{c}{$\left\|\boldsymbol{u}^N - \boldsymbol{u}^N_h\right\|_{L^2}$} & \multicolumn{2}{c}{$\left\|p^N - p^N_h\right\|_{L^2}$} \cr
	 			\cmidrule(lr){2-3} \cmidrule(lr){4-5} \cmidrule(lr){6-7}  \cmidrule(lr){8-9}
	 			& error & rate & error & rate & error & rate & error & rate \cr
	 			\midrule
	 			1/4    & 2.2447e$-$01 &  -       & 11.471e$-$00  & -       & 3.5603e$-$02  & -      & 2.8731e$-$01 & -      \cr
	 			1/8    & 1.7215e$-$01 &  0.3829  & 10.424e$-$00  & 0.1381  & 8.1150e$-$03  & 2.1333 & 6.7899e$-$02 & 2.0812 \cr
	 			1/16   & 6.2742e$-$02 &  1.4562  & 3.3290e$-$00  & 1.6467  & 2.0473e$-$03  & 1.9869 & 2.2477e$-$02 & 1.5950 \cr
	 			1/32   & 1.6435e$-$02 &  1.9326  & 7.3751e$-$01  & 2.1744  & 5.3035e$-$04  & 1.9487 & 5.6529e$-$03 & 1.9914 \cr
	 			1/64   & 4.1262e$-$03 &  1.9939  & 1.7934e$-$01  & 2.0400  & 1.3580e$-$04  & 1.9655 & 1.4206e$-$03 & 1.9925 \cr
	 			1/128  & 1.0322e$-$03 &  1.9991  & 4.4598e$-$02  & 2.0076  & 3.4257e$-$05  & 1.9870 & 3.5615e$-$04 & 1.9959 \cr
	 			\bottomrule
	 		\end{tabular}
	 	\end{threeparttable}
	 \end{table}
     \begin{table}[h!]
     	\centering
     	\fontsize{10}{10}
     	\begin{threeparttable}
     		\caption{Errors and convergence orders of the VSBE-TF scheme on the graded mesh with time $t_n = \left(n/N\right)^2$ for different $N$.}\label{ConvRate_variable_stepsize_timeFilter}
     		\begin{tabular}{c|c|c|c|c|c|c|c|c}
     			\toprule
     			\multirow{2.5}{*}{$N$}  & \multicolumn{2}{c}{$\max\limits_{n}\left\|\phi^n - \phi^n_h\right\|_{L^2}$} & \multicolumn{2}{c}{$\max\limits_{n}\left\|\mu^n - \mu^n_h\right\|_{L^2}$} & \multicolumn{2}{c}{$\max\limits_{n}\left\|\boldsymbol{u}^n - \boldsymbol{u}^n_h\right\|_{L^2}$} & \multicolumn{2}{c}{$\max\limits_{n}\left\|p^n - p^n_h\right\|_{L^2}$} \cr
     			\cmidrule(lr){2-3} \cmidrule(lr){4-5} \cmidrule(lr){6-7}  \cmidrule(lr){8-9}
     			& error & rate & error & rate & error & rate & error & rate \cr
     			\midrule
     			8    & 8.0484e$-$03 &  -       & 6.2034e$-$01  & -       & 2.3450e$-$02  & -      & 1.2258e$-$02 & -      \cr
     			16   & 1.7408e$-$03 &  2.2090  & 9.7741e$-$02  & 2.6660  & 7.2759e$-$03  & 1.6884 & 3.3022e$-$03 & 1.8923 \cr
     			32   & 4.7870e$-$04 &  1.8625  & 2.8057e$-$02  & 1.8006  & 1.9870e$-$03  & 1.8724 & 8.8731e$-$04 & 1.8959 \cr
     			64   & 1.1449e$-$04 &  2.0639  & 6.8313e$-$03  & 2.0381  & 5.1699e$-$04  & 1.9424 & 2.3470e$-$04 & 1.9186 \cr
     			\bottomrule
     		\end{tabular}
     	\end{threeparttable}
     \end{table}
     \begin{table}[h!]
     	\centering
     	\fontsize{10}{10}
     	\begin{threeparttable}
     		\caption{Errors and convergence orders of the ASBE-TF scheme.}\label{ConvRate_adaptive_stepsize_timeFilter}
     		\begin{tabular}{c|c|c|c|c|c|c|c|c}
     			\toprule
     			\multirow{2.5}{*}{$\overline{\Delta t}$}  & \multicolumn{2}{c}{$\left\|\phi^N - \phi^N_h\right\|_{L^2}$} & \multicolumn{2}{c}{$\left\|\mu^N - \mu^N_h\right\|_{L^2}$} & \multicolumn{2}{c}{$\left\|\boldsymbol{u}^N - \boldsymbol{u}^N_h\right\|_{L^2}$} & \multicolumn{2}{c}{$\left\|p^N - p^N_h\right\|_{L^2}$} \cr
     			\cmidrule(lr){2-3} \cmidrule(lr){4-5} \cmidrule(lr){6-7}  \cmidrule(lr){8-9}
     			& error & rate & error & rate & error & rate & error & rate \cr
     			\midrule
     			1/16    & 2.0466e$-$01 &  -       & 12.361e$-$00  & -       & 8.2843e$-$02  & -      & 6.3417e$-$02 & -      \cr
     			1/32    & 1.4331e$-$01 &  0.5141  & 8.2971e$-$00  & 0.5751  & 3.1732e$-$02  & 1.3844 & 4.9143e$-$02 & 0.3679 \cr
     			1/77    & 2.8214e$-$02 &  1.8509  & 1.0651e$-$00  & 2.3379  & 9.7319e$-$03  & 1.3460 & 8.4468e$-$03 & 2.0055 \cr
     			1/167   & 3.9732e$-$03 &  2.5320  & 1.5149e$-$01  & 2.5191  & 3.0392e$-$03  & 1.5033 & 1.4308e$-$03 & 2.2935 \cr
     			1/505   & 7.7903e$-$04 &  1.4724  & 4.2378e$-$02  & 1.1512  & 9.6056e$-$04  & 1.0409 & 4.0012e$-$04 & 1.1515 \cr
     			\bottomrule
     		\end{tabular}
     	\end{threeparttable}
     \end{table}
	\subsection{Shape relaxation}
	In this numerical experiment, we set the domain $\Omega=[0,1]\times[0,1]$, mesh size $h=1/64$, $\Delta t = 0.1$ and take the rotational boundary condition $\boldsymbol{u}=(y-0.5,-x+0.5)$ on $\partial\Omega$. In this example, we choose $\epsilon=0.02$, $M=0.01$, $\gamma=0.01$, $\nu=1$, and the performance of the scheme \eqref{eq_algorithm3_1}-\eqref{eq_algorithm3_2} with the critical phase field initial data which is the $\phi^0=1$ in a polygonal subdomain with re-entrant corners and $\phi^0=-1$ in the remaining part of $\Omega$.
	The initial velocity are given by $\boldsymbol{u}^0=\left(y-0.5,-x+0.5\right)$. This problem has been studied numerically in \cite{2008_Kay_David_and_Welford_Richard_Finite_element_approximation_of_a_Cahn_Hilliard_Navier_Stokes_system}, and we run this example up to final time $T=10$, and record the several snapshots of phase function in 
	\autoref{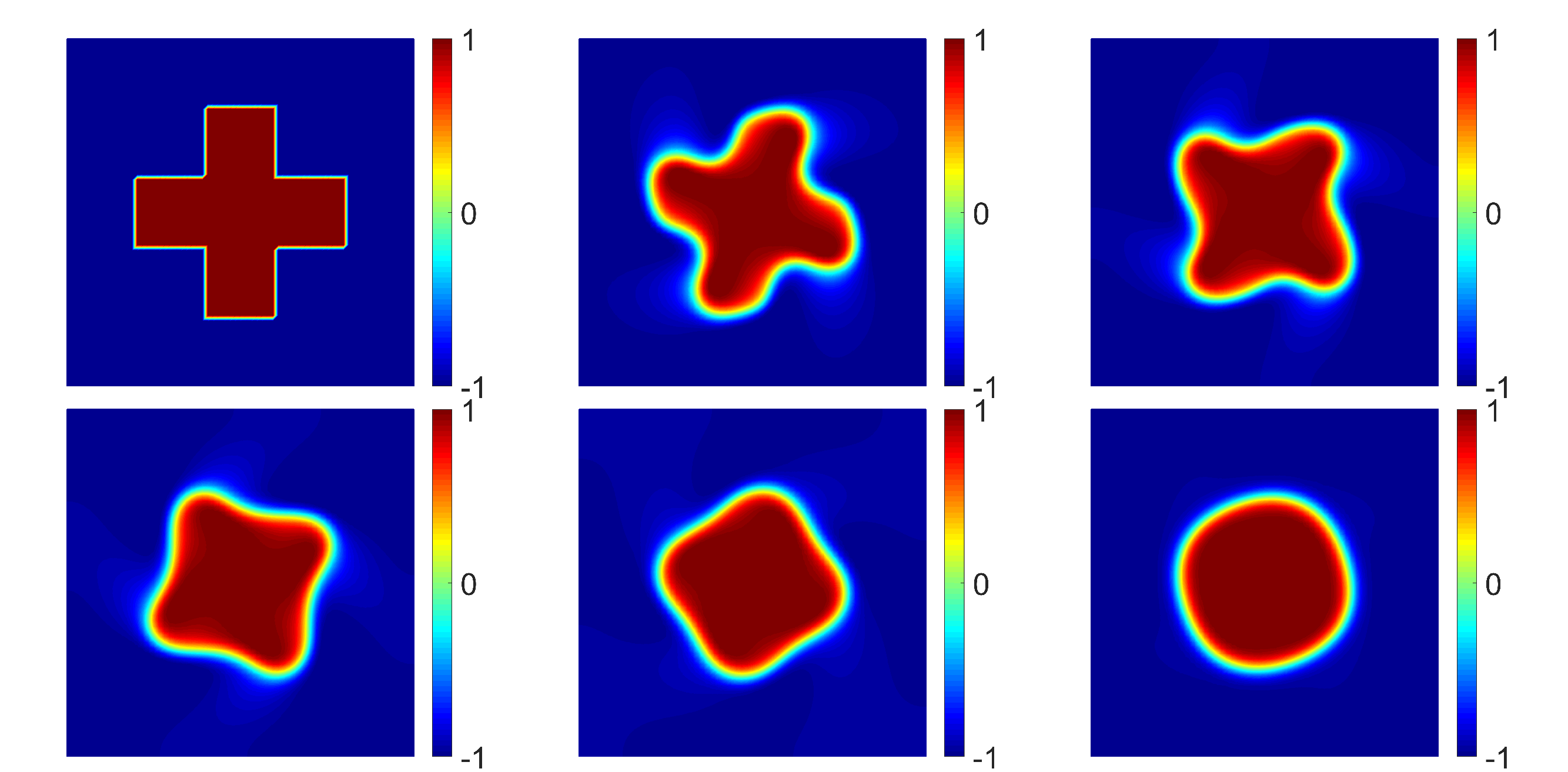}. From this figure, one can see that the cross-shaped area gradually degenerates into a circle under the action of the rotating boundary condition.
%
	\begin{figure}[h!]
		\centering
		\includegraphics[width=1\linewidth]{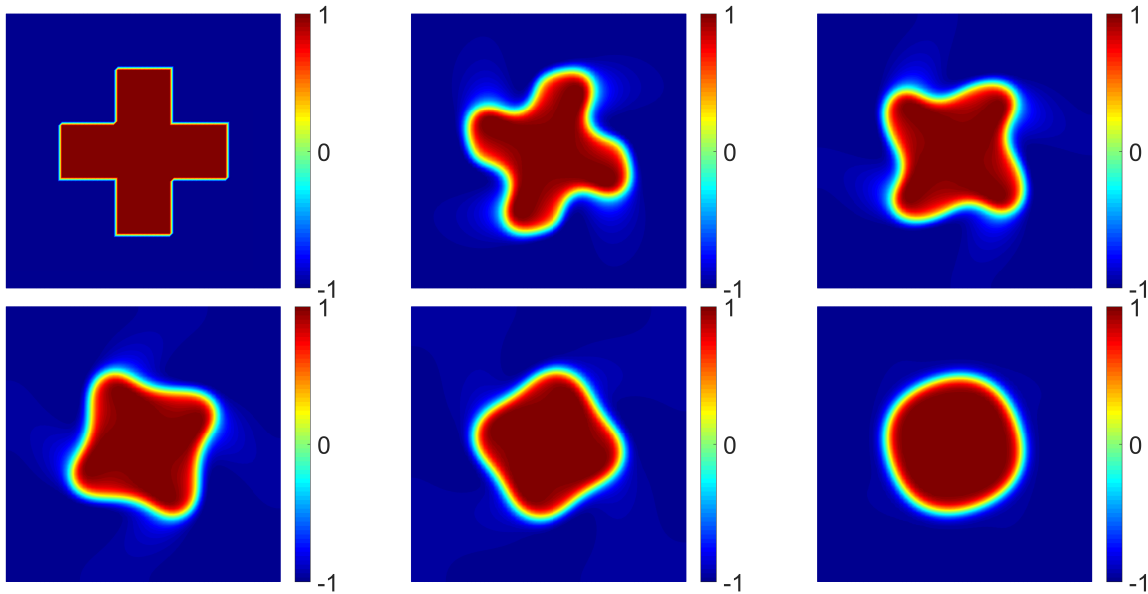}
		\caption{Snapshots of phase function at difference times from left to right row by row with $t = [0,~ 1,~ 2,~ 3,~ 5,~ 10]$, respectively.}
		\label{PhaseSnap_RotaBounCond_dt01_20250617.eps}
	\end{figure}
	\subsection{Shrinking circular bubble}
	In this example, we consider the simulation of the shrinking process of a circular bubble, as a further confirmation of the ability of our newly TF scheme to simulate the practical problems stably and accurately, and this test have also been conducted in \cite{2024_CHEN_chuanjun_SCM}. The computational domain is $\Omega=[0,2\pi]\times[0,2\pi]$, and mesh size $h=2\pi/64$, time-step $\Delta t = 0.1$. We simulate this process until the final time $T = 15$. The physical parameters are chosen as:
	$\epsilon=0.15$, $M=0.4$, $\gamma=0.01$, and $\nu=1$.
	The initial values are set as
	$$
	\begin{aligned}
		\phi_0 &= 1 + \tanh\left(\frac{1.4 - \sqrt{(x+0.8-\pi)^2+(y-\pi)^2}}{1.5\epsilon}\right) + \tanh\left(0.5 - \frac{\sqrt{(x-1.7-\pi)^2+(y-\pi)^2}}{1.5\epsilon}\right).  \\
		\boldsymbol{u}^0 &= \boldsymbol{0}, \quad p^0 = 0.
	\end{aligned}
	$$
	From the \autoref{Figure_Stabf_TwoBubbMerg_20250701}, we can see that the two bubbles gradually converge towards a steady circular bubble, which validates the ability of TF scheme.
	\begin{figure}[h!]
		\centering
		\includegraphics[width=1\linewidth]{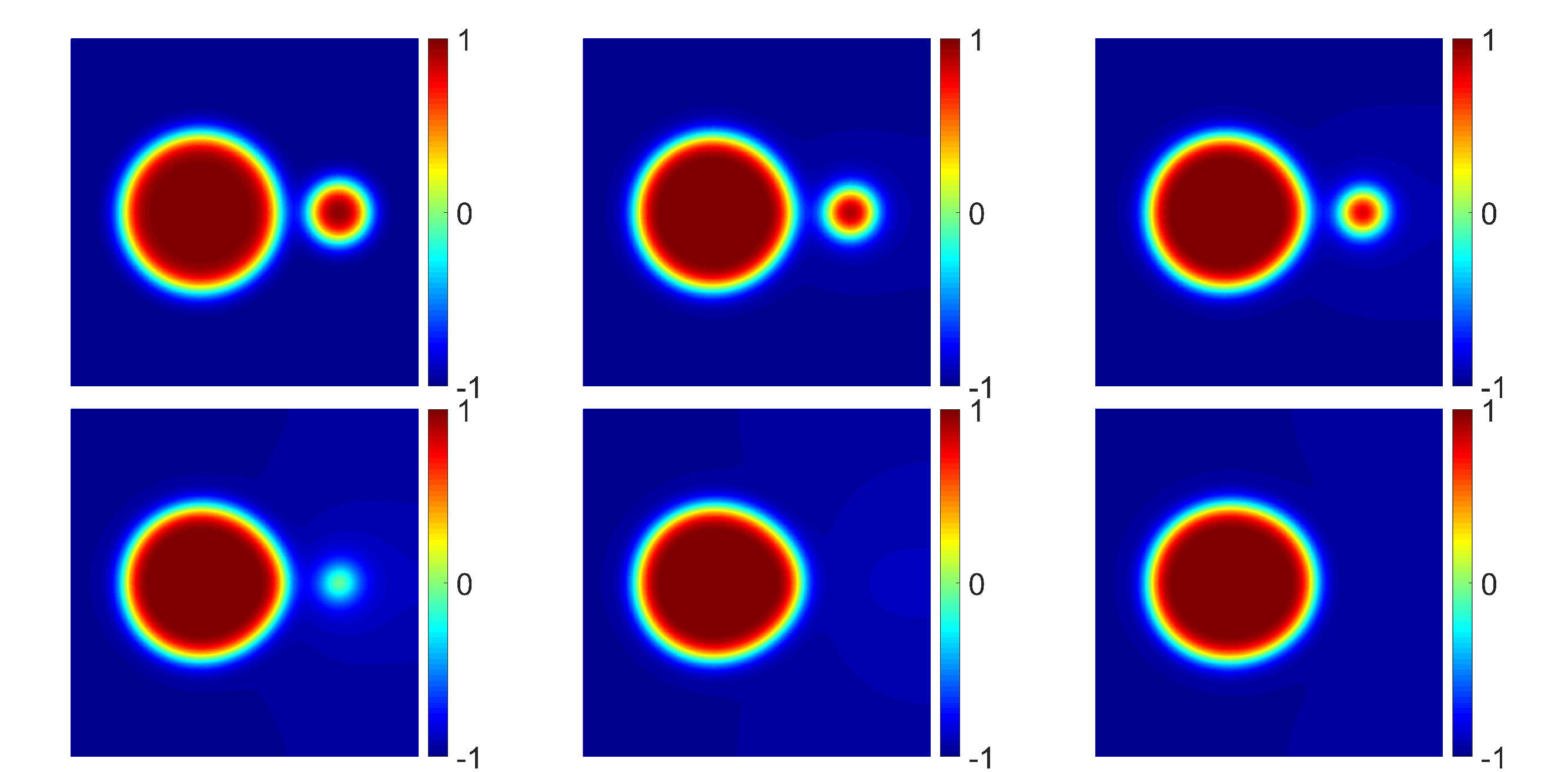}
		\caption{Snapshots of phase function at difference times from left to right row by row with $t = [0, ~2,~ 3,~ 5, ~7, ~15]$, respectively.}
		\label{Figure_Stabf_TwoBubbMerg_20250701}
	\end{figure}
    \subsection{Unconditional stability}
    To demonstrate the unconditional stability of the newly TF scheme, we run the two above practical experiments again with the same settings, except the variable time-step $\Delta t$. From the \autoref{Figure_UncondStab_20250611}, one can see that the TF technique indeed maintain the unconditional energy-stable of first-order BE scheme, just as analyzed in Theorem \autoref{theorem_unconditional_stability}.
    \begin{figure}[h!]
    	\centering
    	\includegraphics[width=1\linewidth]{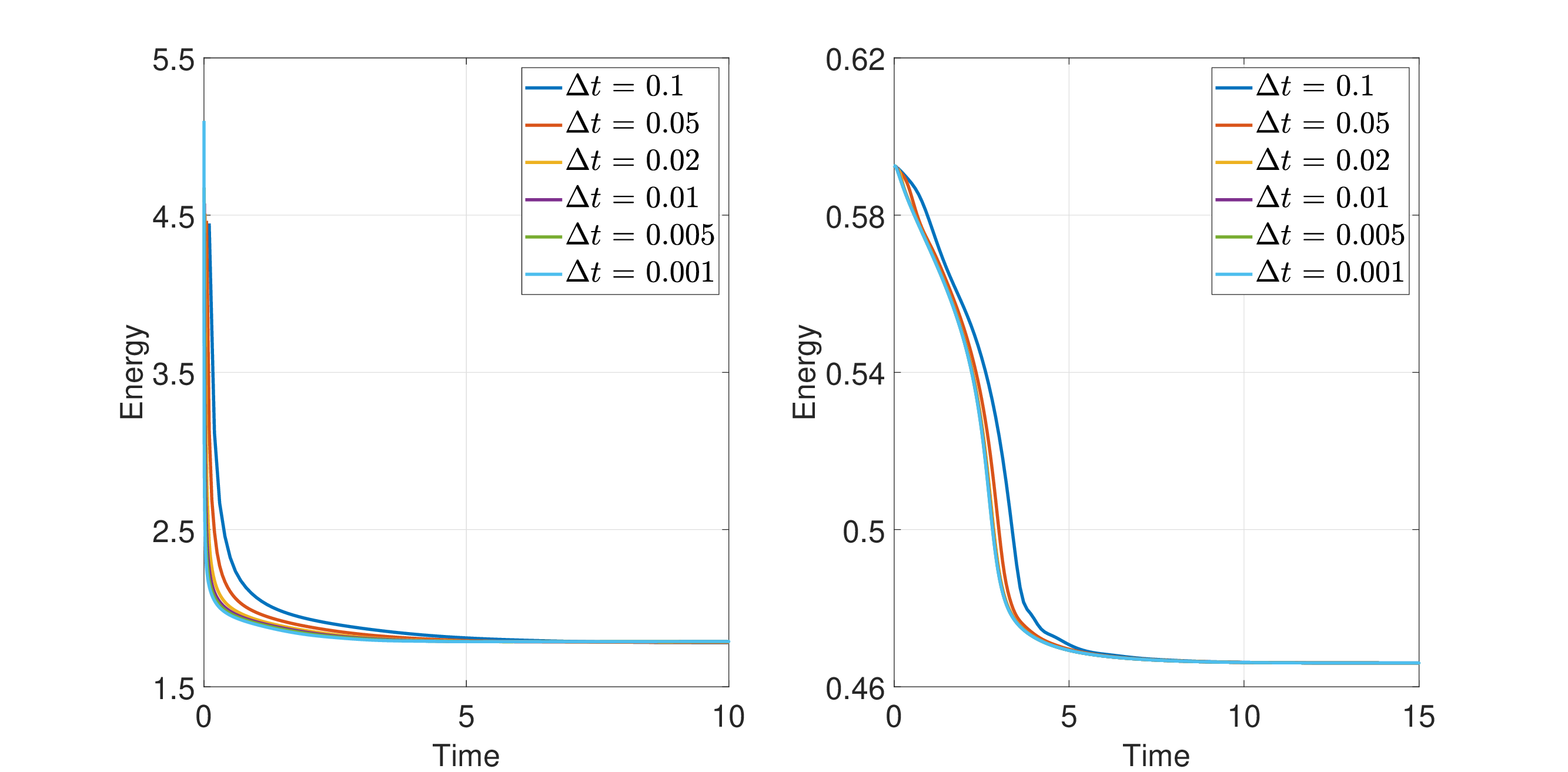}
    	\caption{Evolution of energy for shape relaxation experiment (left) and shrinking circular bubble experiment (right).}
    	\label{Figure_UncondStab_20250611}
    \end{figure}
	\section{Conclusion}\label{section-5}
	In this article, we have presented several novel temporal semi-discrete numerical scheme of the Cahn-Hilliard-Navier-Stokes equations using the time filtering technique, and have analyzed the unconditional stability and convergence of the backward Euler (BE) time filtering scheme. By incorporating the time filtering technique into the BE framework, we have shown that the computational complexity remains almost unchanged while improving the time order of the BE scheme by one. Through several numerical examples, we have demonstrated the effectiveness of the time-filtered scheme in enhancing both temporal accuracy, serving as a cheap estimator, and applying to some practical problems. Based on these results, we plan to extend to high-order algorithms characterized by conceptual simplicity and low computational cost.
	
	\section*{Data availability}
	Data will be made available on reasonable request.
	
	\section*{Declarations}
	The authors declare no competing interests.
	
	\appendix
	\section{Appendix A: Proof of Lemma \ref{Lemma_Estimation_Truncation_error}}\label{Appendix-Lemma-4.2}
	\begin{proof}
		For the first term, by the definition of $R_1,R_2,R_3$, we have
		\begin{equation*}
			\begin{aligned}
				\left\|R_1\right\|
				\leq & \left\|\frac{1}{\Delta t}\mathcal{A}\left(\phi(t^{n+1})\right)- \phi_t(t^{n+1})\right\| + \left\|\Delta\left(\mathcal{B}\left(\mu\left(t^{n+1}\right) \right)- \mu(t^{n+1})\right)\right\| \\
				& \quad + \left\|\left(\mathcal{B}\left(\boldsymbol{u}(t^{n+1})\right) - \boldsymbol{u}(t^{n+1})\right) \cdot \nabla \mathcal{B}\left(\phi(t^{n+1})\right)\right\| \\
				& \quad + \left\|\boldsymbol{u}(t^{n+1}) \cdot \nabla \left(\mathcal{B}\left(\phi(t^{n+1})\right) - \phi(t^{n+1})\right)\right\|  \\
				\leq & C\left\|\frac{1}{\Delta t}\mathcal{A}\left(\phi(t^{n+1})\right)- \phi_t(t^{n+1})\right\| + C\left\|\Delta\left(\mathcal{B}\left(\mu\left(t^{n+1}\right) \right)- \mu(t^{n+1})\right)\right\| \\
				& \quad + C\|\mathcal{B}\left(\boldsymbol{u}(t^{n+1})\right) - \boldsymbol{u}(t^{n+1})\|_1 \cdot \left\|\mathcal{B}\left(\phi(t^{n+1})\right)\right\|_2 \\
				& \quad + C\|\boldsymbol{u}(t^{n+1})\|_1 \cdot \left\|\mathcal{B}\left(\phi(t^{n+1})\right) - \phi(t^{n+1})\right\|_2
			\end{aligned}
		\end{equation*}
	    and 
	    \begin{equation*}
	    	\begin{aligned}
	    		\| \nabla R_2 \| &\leq \|\nabla \left(\mathcal{B}(\mu(t^{n+1}))-\mu(t^{n+1})\right)\| + \|\nabla \Delta \left(\mathcal{B}(\phi(t^{n+1}))-\phi(t^{n+1})\right)\| \\
	    		& \quad + \|\nabla \left(f(\phi(t^{n+1}))-2f(\phi(t^n))+f(\phi(t^{n-1}))\right)\|  \\
	    		& \leq  \|\nabla \left(\mathcal{B}(\mu(t^{n+1}))-\mu(t^{n+1})\right)\| + \|\nabla \Delta \left(\mathcal{B}(\phi(t^{n+1}))-\phi(t^{n+1})\right)\| \\
	    		& \quad + \|f^{\prime}(\xi_1)\nabla \left(\phi(t^{n+1})-2\phi(t^n)+\phi(t^{n-1})\right)\|  \\
	    		& \leq C \|\nabla \left(\mathcal{B}(\mu(t^{n+1}))-\mu(t^{n+1})\right)\| + C \|\nabla \Delta \left(\mathcal{B}(\phi(t^{n+1}))-\phi(t^{n+1})\right)\| \\
	    		& \quad + CL\left(\Delta t\right)^2 \|\phi_{tt}\|_{L^{\infty}(0,T;H^1)}.
	    	\end{aligned}
	    \end{equation*}
        Finally, 
	    \begin{equation*}
	    	\begin{aligned}
	    		\|R_3\| &\leq \left\|\frac{1}{\Delta t}\mathcal{A}(\boldsymbol{u}(t^{n+1}))-\boldsymbol{u}_t(t^{n+1})\right\| + \left\|\Delta\left(\mathcal{B}(\boldsymbol{u}(t^{n+1}))-\boldsymbol{u}(t^{n+1})\right)\right\|  \\
	    		& \quad + \left\| \mathcal{B}\left(\boldsymbol{u}(t^{n+1})\right) \cdot \nabla\left(\mathcal{B}\left(\boldsymbol{u}(t^{n+1})\right) - \boldsymbol{u}(t^{n+1})\right)\right\|  \\
	    		& \quad + \left\|\left(\mathcal{B}\left(\boldsymbol{u}(t^{n+1})\right) - \boldsymbol{u}(t^{n+1})\right) \cdot \nabla\boldsymbol{u}(t^{n+1})\right\|  \\
	    		& \quad + \left\| \mathcal{B}\left(\mu(t^{n+1})\right) \cdot \nabla \left(\mathcal{B}\left(\phi(t^{n+1})\right) - \phi(t^{n+1})\right) \right\|  \\
	    		& \quad + \left\|\left(\mathcal{B}\left(\mu(t^{n+1})\right) - \mu(t^{n+1})\right) \cdot \nabla\phi(t^{n+1})\right\|  \\
	    		& \quad +  \|\nabla\mathcal{B}(p(t^{n+1}))-p(t^{n+1}))\|  \\
	    		& \leq C\left\|\frac{1}{\Delta t}\mathcal{A}(\boldsymbol{u}(t^{n+1}))-\boldsymbol{u}_t(t^{n+1})\right\| + C\left\|\Delta\left(\mathcal{B}(\boldsymbol{u}(t^{n+1}))-\boldsymbol{u}(t^{n+1})\right)\right\|  \\
	    		& \quad + C\|\mathcal{B}\left(\boldsymbol{u}(t^{n+1})\right)\|_1 \cdot \left\|\mathcal{B}\left(\boldsymbol{u}(t^{n+1})\right) - \boldsymbol{u}(t^{n+1})\right\|_2  \\
	    		& \quad + C\|\boldsymbol{u}(t^{n+1})\|_2 \cdot \left\|\mathcal{B}\left(\boldsymbol{u}(t^{n+1})\right) - \boldsymbol{u}(t^{n+1})\right\|_1  \\
	    		& \quad + C\|\mathcal{B}\left(\mu(t^{n+1})\right)\|_1 \cdot \left\|\mathcal{B}\left(\phi(t^{n+1})\right) - \phi(t^{n+1}) \right\|_2  \\
	    		& \quad + C\|\phi(t^{n+1})\|_2 \cdot \left\|\mathcal{B}\left(\mu(t^{n+1})\right) - \mu(t^{n+1})\right\|_1  \\
	    		& \quad + C \|\nabla\mathcal{B}(p(t^{n+1}))-p(t^{n+1}))\|.
	    	\end{aligned}
	    \end{equation*}
	\end{proof}
    \section{Appendix B: Proof of Lemma \ref{Lemma_A_e_phi_dt}}\label{Appendix-Lemma-4.3}
    \begin{proof}
    	Setting $\psi=\mathcal{A}(e_{\phi}^{n+1})/\Delta t$ in \eqref{eq_error_phi} and using the definition of $\mathcal{T}$ to get
    	$$
    	\begin{aligned}
    		\left\|\frac{\mathcal{A}(e^{n+1}_{\phi})}{\Delta t}\right\|^2_{-1}  
    		= &- \left([\mathcal{B}\left(\boldsymbol{u}(t^{n+1})\right) \cdot \nabla \mathcal{B}\left(\phi(t^{n+1})\right) - \mathcal{B}(\boldsymbol{u}^{n+1}) \cdot \nabla \bar{\phi}^{n+1}], \mathcal{T}\left(\frac{\mathcal{A}(e^{n+1}_{\phi})}{\Delta t}\right)\right) \\
    		&- \left(\nabla \mathcal{B}(e^{n+1}_{\mu}), \nabla\mathcal{T}\left(\frac{\mathcal{A}(e^{n+1}_{\phi})}{\Delta t}\right)\right) + \left(R_1,\mathcal{T}\left(\frac{\mathcal{A}(e^{n+1}_{\phi})}{\Delta t}\right)\right).
    	\end{aligned}
    	$$
    	For the first term on the RHS, 
    	\begin{equation}
    		\begin{aligned}
    			& \left|\left([\mathcal{B}\left(\boldsymbol{u}(t^{n+1})\right) \cdot \nabla \mathcal{B}\left(\phi(t^{n+1})\right) - \mathcal{B}(\boldsymbol{u}^{n+1}) \cdot \nabla \bar{\phi}^{n+1}], \mathcal{T}\left(\frac{\mathcal{A}(e^{n+1}_{\phi})}{\Delta t}\right)\right)\right|  \\
    			= &~ \bigg|\left(\mathcal{B}(e_{\boldsymbol{u}}^{n+1})\nabla\mathcal{B}(\phi(t^{n+1}))+\mathcal{B}(\boldsymbol{u}^{n+1})\nabla\left(\mathcal{B}(\phi(t^{n+1}))-2\phi(t^{n})+\phi(t^{n-1})\right), \mathcal{B}(e_{\mu}^{n+1})\right)\\
    			&~+\left(\mathcal{B}(\boldsymbol{u}^{n+1})\nabla\left(2e_{\phi}^n-e_{\phi}^{n-1}\right),\mathcal{T}\left(\frac{\mathcal{A}(e^{n+1}_{\phi})}{\Delta t}\right)\right)\bigg|\\
    			\leq & \left(C\|\mathcal{B}(\phi(t^{n+1}))\|_2 \|\mathcal{B}(e_{\boldsymbol{u}}^{n+1})\| + C\|\nabla\mathcal{B}(\boldsymbol{u}^{n+1})\| \|\nabla\left(2e_{\phi}^n-e_{\phi}^{n-1}\right)\| \right. \\
    			& \quad \left.+ C\|\nabla\mathcal{B}(\boldsymbol{u}^{n+1})\| \|\nabla\left(\mathcal{B}(\phi(t^{n+1}))-2\phi(t^{n})+\phi(t^{n-1})\right)\|\right)\cdot \left\|\nabla \mathcal{T}\left(\frac{\mathcal{A}(e^{n+1}_{\phi})}{\Delta t}\right)\right\|  \\
    			\leq & \left(C\|\mathcal{B}(\phi(t^{n+1}))\|_2 \|\mathcal{B}(e_{\boldsymbol{u}}^{n+1})\| + C\|\nabla\mathcal{B}(\boldsymbol{u}^{n+1})\| \|\nabla\left(2e_{\phi}^n-e_{\phi}^{n-1}\right)\| \right. \\
    			& \quad \left.+ C\left(\Delta t\right)^2 \|\nabla \mathcal{B}(\boldsymbol{u}^{n+1})\| \cdot\|\phi_{tt}\|_{L^{\infty}(0,T;H^1)}\right)\cdot \left\|\frac{\mathcal{A}(e^{n+1}_{\phi})}{\Delta t}\right\|_{-1}
    		\end{aligned}
    	\end{equation}
    	For the second term on the RHS, 
    	\begin{equation}
    		\begin{aligned}
    			\left|\left(\nabla \mathcal{B}(e^{n+1}_{\mu}), \nabla\mathcal{T}\left(\frac{\mathcal{A}(e^{n+1}_{\phi})}{\Delta t}\right)\right)\right| 
    			\leq  \Bigg\|\nabla \mathcal{B}(e^{n+1}_{\mu})\Bigg\| \cdot \left\| \nabla \mathcal{T}\left(\frac{\mathcal{A}(e^{n+1}_{\phi})}{\Delta t}\right)\right\| 
    			= \Bigg\|\nabla \mathcal{B}(e^{n+1}_{\mu})\Bigg\| \cdot \left\| \frac{\mathcal{A}(e^{n+1}_{\phi})}{\Delta t}\right\|_{-1}
    		\end{aligned}
    	\end{equation}
    	For the third term on the RHS, 
    	\begin{equation}
    		\begin{aligned}
    			\left|\left(R_1,\mathcal{T}\left(\frac{\mathcal{A}(e^{n+1}_{\phi})}{\Delta t}\right)\right)\right|
    			\leq \Bigg\|R_1\Bigg\| \cdot \left\| \nabla \mathcal{T}\left(\frac{\mathcal{A}(e^{n+1}_{\phi})}{\Delta t}\right)\right\| 
    			\leq C \Bigg\|R_1\Bigg\| \cdot \left\| \frac{\mathcal{A}(e^{n+1}_{\phi})}{\Delta t}\right\|_{-1}
    		\end{aligned}
    	\end{equation}
    	Combining the above estimates to complete this proof.
    \end{proof}
	
	
	\bibliographystyle{plain}
	\bibliography{bibfile}
\end{document}